\newenvironment{proof}{\noindent {\bf Proof }}
{\hfill $\bullet$ \vspace{0.25cm}}
\def\E{{\mathbb E}}
\def\P{{\mathbb P}}
\def\R{{\mathbb R}}
\def\N{{\mathbb N}}
\def\1{{\mathbf 1}}
\def\F {{\mathcal F}}
\newtheorem{theo}{Theorem}
\newtheorem{prop}{\indent Proposition}
\newtheorem{rem}{\indent Remark}
\newtheorem{lem}{\indent Lemma}
\newtheorem{defin}{\indent Definition}
\newtheorem{cor}{\indent Corollary}
\newtheorem{ass}{\indent Assumption}
\title{Multi-class oscillating systems of interacting neurons}
\author{Susanne Ditlevsen \thanks{Department of Mathematical Sciences,
  University of Copenhagen, Universitetsparken 5, DK-2100 Copenhagen}\\
  \and
  Eva L{\"o}cherbach\thanks{
Universit{\'e} de Cergy-Pontoise, AGM UMR-CNRS 8088, 2 avenue Adolphe Chauvin, F-95302 Cergy-Pontoise Cedex} 
}
\begin{document}

\maketitle

\begin{abstract}
We consider multi-class systems of interacting nonlinear Hawkes
processes modeling several large families of neurons and study their mean field limits. As the total number of neurons goes to infinity we prove that the evolution within each class can be described by a nonlinear limit differential equation driven by a Poisson random measure, and state associated central limit theorems. We study situations in which the limit system exhibits oscillatory behavior, and relate the results to certain piecewise deterministic Markov processes and their diffusion approximations.
\end{abstract}

{\it Key words} : Multivariate nonlinear Hawkes processes. Mean-field approximations. Piecewise deterministic Markov processes. Multi-class systems. Oscillations. Diffusion approximation.

{\it AMS Classification}  : 60G55; 60K35

\section{Introduction}

Biological rhythms are ubiquitous in living organisms. The brain controls and helps maintain the internal clock for many of these rhythms, and fundamental questions are how they arise and what is their purpose. Many examples of such biological oscillators can be found in the classical book by Glass and Mackey (1988) \cite{GlassMackeyBook}. The motivation for this paper comes from the rhythmic scratch like network activity in the turtle, induced by a mechanical stimulus, and recorded and analyzed by Berg and co-workers \cite{Berg2007,Berg2008,Berg2013,Jahn2011}. Oscillations in a spinal motoneuron are initiated by the sensory input, and continue by some internal mechanisms for some time after the stimulus is terminated. 
While mechanisms of rapid processing are well documented in sensory systems, rhythm-generating motor circuits in the spinal cord are poorly understood. The activation leads to an intense synaptic bombardment of both excitatory and inhibitory input, and it is of interest to characterize such network activity, and to build  models which can generate self-sustained oscillations. 

The aim of this paper is to present a microscopic model describing a large network of interacting neurons which can generate oscillations. The activity of each neuron is represented by a point process, namely, the successive times at which the neuron emits an action potential or a so-called spike. A realization of this point process is called a spike train. It is commonly admitted that the spiking intensity of a neuron, i.e., the infinitesimal probability of emitting an action potential during the next time unit, depends on the past history of the neuron and it is affected by the activity of  other neurons in the network. Neurons interact mostly through chemical synapses, where a spike of a pre-synaptic neuron leads to an increase if the synapse is excitatory, or a decrease if the synapse is inhibitory, of the membrane potential of the post-synaptic neuron, possibly after some delay. In neurophysiological terms this is called synaptic integration. When the membrane potential reaches a certain upper threshold, the neuron fires a spike. Thus, excitatory inputs from the neurons in the network increase the firing intensity, and inhibitory inputs decrease it. Hawkes processes provide good models of this synaptic integration phenomenon by the structure of their intensity processes, see \eqref{eq:intensity0} below. We refer to Chevallier et al.\ (2015) \cite{ccdr}, Chornoboy et al.\  (1988) \cite{chorno},  Hansen et al.\ (2015)  \cite{hrbr} and to Reynaud-Bouret et al. (2014) \cite{pat} for the use of Hawkes processes in neuronal modeling. For an overview of point processes used as stochastic models for interacting neurons both in discrete and in continuous time and related issues, see also Galves and L\"ocherbach (2016) \cite{gl}.

In this paper, we study oscillatory systems of interacting Hawkes processes representing the time occurrences of action potentials of neurons. The system consists of several large populations of neurons. Each population might represent a different functional group of neurons, for example different hierarchical layers in the visual cortex, such as V1 to V4, or the populations can be pools of excitatory and inhibitory neurons in a network. Each neuron is characterized by its spike train, and the whole system is described by  multivariate counting processes $Z^N_{k, i} ( t), t \geq 0.$ Here, $ Z^N_{k, i} ( t)$ represents the number of spikes of the $i$th neuron belonging to the $k$th population, during the time interval $ [0, t ].$ The number of classes $n$ is fixed, and each class $k= 1, \ldots , n $ consists of $  N_k$ neurons. 
The total number of neurons is therefore $N = N_1 + \ldots + N_n.$ 

Under suitable assumptions, the sequence of counting processes $ ( Z^N_{k, i } )_{1 \le k \le n , 1 \le i \le N_k} $ is characterized by its intensity processes $ (\lambda^N_{k, i} ( t)  ) $ defined through the relation 
$$ \P ( Z^{N}_{k, i } \mbox{ has a jump in } ]t , t + dt ] | \F_t ) = \lambda^N_{k,i } (t)  dt , $$
where $ \F_t = \sigma (  Z^{N}_{k, i} (s)  , \, s \le t ,{1 \le k \le n , 1 \le i \le N_k} ) .$ We consider a mean-field framework where $ \lambda^N_{k, i } (t) $ is given by
\begin{equation}\label{eq:intensity0}
\lambda^N_{k,i} ( t)  = f_k  \left( \sum_{l=1}^n \frac{1}{N_l}\sum_{1 \le j \le N_l}     \int_{]0 , t [} h_{kl }( t-s) d Z^N_{ l,j} (s)  \right) .
\end{equation}
Here, $f_k : \R \to \R_+$ is the {\it spiking rate function} of population $k$, and $\{ h_{k l } : \R_+ \to \R \} $ is a family of {\it synaptic weight functions} modeling the influence of population $l $ on population $k$. By integrating over $]0,t[$ and not over $]-\infty,t[$, we implicitly assume initial conditions of no spiking activity before time 0. 

Equation (\ref{eq:intensity0}) has the typical form of the intensity of a multivariate nonlinear Hawkes process, going back to Hawkes (1971) \cite{Hawkes} and Hawkes and Oakes (1974) \cite{ho}. We refer to Br\'emaud and Massouli\'e (1996) \cite{bm} for the stability properties of multivariate Hawkes processes, and to Delattre, Fournier and Hoffmann (2015) \cite{dfh} and Chevallier (2015) \cite{chevallier} for the study of Hawkes processes in high dimensions.  

The structure of \eqref{eq:intensity0} is such that within each population, all neurons behave in a similar way, i.e., the intensity process $\lambda^N_{k,i} ( t) $ depends only on the empirical measures of each population. Thus, neurons within a given population are exchangeable. Therefore, we deal with a multi-class system of populations interacting in a mean-field framework which is reminiscent of Graham (2008) \cite{carl} and Graham and Robert (2009) \cite{carl2}.  Our aim is to study the large population limit when $ N  \to \infty$ and to show that in this limit self-sustained periodic behavior emerges even though each single neuron does not follow periodic dynamics. The study follows a long tradition, see e.g.\ Scheutzow (1985) \cite{Scheutzow1,Scheutzow2} in the framework of nonlinear diffusion processes, or Dai Pra, Fischer and Regoli (2015) \cite{paolo1} and Collet, Dai Pra and Formentin (2015) \cite{paolo2}. Our paper continues these studies within the framework of infinite memory point processes. The first important step is to establish propagation of chaos of the finite system $(Z^N_{k, i} ( t))_{1 \le k \le n , 1 \le i \le N_k} $ as $N \to \infty, $ under the condition that for each class $ 1 \le k \le n$,  $\lim_{N \to \infty} N_k/N$ exists and is in $]0, 1[$.

\subsection{Propagation of chaos}
In Section \ref{sec:propagation}, we study the limit behavior of the system $(Z^N_{k, i} ( t))_{1 \le k \le n , 1 \le i \le N_k} $ as $N \to \infty.$ We show in Theorem \ref{theo:one} that the system can be approximated by a system of inhomogeneous independent Poisson processes $ (\bar Z_1 (t) , \ldots ,\bar Z_n (t) ), $ where each $\bar Z_k (t)$ has intensity 
$$ f_k \left( \sum_{l=1}^n \int_0^t h_{kl } ( t-s)  d \E ( \bar Z_l  (s))  \right) dt. $$
Here, $\bar Z_k (t)$ represents the number of spikes during $[0,t]$ of a typical neuron belonging to population $k$ in the limit system. This result is an extension of results obtained by \cite{dfh} to the multi-class case. It follows that the system is {\em multi-chaotic} in the sense of \cite{carl}. The equivalence between the {\em chaoticity} of the system and a weak law of large numbers for the empirical measures, as proven in Theorem \ref{theo:one}, is well-known (see for instance Sznitman (1991) \cite{s}). This means that in the large population limit, within the same class, the neurons converge in law to independent and identically distributed copies of the same limit law. This property is usually called {\em propagation of chaos} in the literature. In particular, as pointed out in \cite{carl}, we have asymptotic independence between the different classes, and interactions between classes do only survive in law.


In Section \ref{sec:CLT}, still following ideas of \cite{dfh}, we state an associated central limit theorem in Theorem \ref{theo:2}. The extension to nonlinear rate functions $f_k$ requires the use of  matrix-convolution equations which go back to Crump (1970) \cite{crump} and Athreya and Murthy (1976) \cite{athreya-murthy} and which are collected in Appendix, Section \ref{sec:convolution}. 

\subsection{Oscillatory behavior of the limit system}
In Section \ref{sec:3} we present conditions under which the limit system possesses solutions which are periodic in law. To be more precise, the classes interact according to a cyclic feedback system and each class $k$ is only influenced by class $k+1,$ where we identify $n+1 $ with $1.$ In this case $m_t^k = \E (\bar Z_k (t)), 1 \le k \le n ,$ is solution of 
\begin{equation}\label{eq:limitmean0}
m_t^k = \int_0^t  f_k \left(  \int_0^s h_{k k+1} ( s- u ) d m_u^{k+1}   \right) ds .
\end{equation} 
If the memory kernels $ h_{k k+1}$ are given by Erlang kernels, as used e.g. in modeling the delay in the hemodynamics in nephrons, see \cite{Ditlevsen2005,KidneyDelay} and \eqref{eq:erlang} below, then  
Theorem \ref{theo:orbit} characterizes situations in which the system \eqref{eq:limitmean0} possesses attracting non-constant periodic orbits, that is, presents oscillatory behavior. This result goes back to deep theorems in dynamical systems, obtained by Mallet-Paret and Smith (1990) \cite{malletparet-smith} and used in a different context in Bena\"{\i}m and Hirsch (1999) \cite{michel}, from where we learned about these results. In particular, the celebrated Poincar\'e-Bendixson theorem plays a crucial role.

\subsection{Hawkes processes, associated piecewise deterministic Markov processes and longtime behavior of the approximating diffusion process}
Hawkes processes are truly infinite memory processes and techniques from the theory of Markov processes are in general not applicable. However, in the special situation where the memory kernels are given by Erlang kernels, the intensity processes can be described in terms of an equivalent high dimensional system of {\it piecewise deterministic Markov processes} (PDMPs). Once we are back in the Markovian world, we can study the longtime behavior of the process, ergodicity, and so on.

In Section \ref{sec:diffusion}, we obtain an approximating diffusion equation in \eqref{eq:cascadeapprox} which is shown to be close in a weak sense to the original PDMP defining the Hawkes process (Theorem \ref{theo:approx}). Once we dispose of this small noise approximation, we then study the longtime behavior in the case of two populations, $n=2.$ In particular, we show to which extent the approximating diffusion presents the same oscillatory behavior as the limit system. 
 
This approximating diffusion is highly degenerate having Brownian noise present only in two of its coordinates. However, the very specific {\em cascade structure} of its drift vector implies that the weak H\"ormander condition holds on the whole state space, and as a consequence, the diffusion is strong Feller. A simple Lyapunov argument shows that the process comes back to a compact set infinitely often, almost surely. 

Since the limit system possesses a non constant periodic orbit $\Gamma$ which is asymptotically orbitally stable, it is well known that there exists a local Lyapunov function $V( x) $ defined on a neighborhood of $\Gamma $ such that $V$ decreases along the trajectories of the limit system, describing the attraction of the limit system to $\Gamma $ (see e.g.\ Yoshizawa (1966) \cite{yoshizawa} and Kloeden and Lorenz (1986) \cite{kloeden}). This Lyapunov function is shown also to be a Lyapunov function for the approximating diffusion, in particular, the diffusion is also attracted to $\Gamma , $ once it has entered the basin of attraction of $\Gamma .$  A control argument shows finally that this happens infinitely often almost surely (Theorem \ref{theo:oscillation}), in particular, for large enough $N,$ the approximating diffusion also presents oscillations. 

We close our paper with some simulation studies.

\section{Systems of interacting Hawkes processes, basic notation and large population limits}
Consider $n$ populations, each composed by $ N_k  $ neurons, $k = 1 , \ldots , n .$ The total number of neurons in the system is $N = N_1  + \ldots + N_n $. The activity of each neuron is described by a counting process $ Z^{N}_{k, i} (t), {1 \le k \le n , 1 \le i \le N_k} , t \geq 0,$ recording the number of spikes of the $i$th neuron belonging to population $k$  during the interval $ [0, t ]. $ The sequence of counting processes $ ( Z^{N}_{k,i} ) $ is characterized by its intensity processes $ (\lambda^N_{k, i} ( t)  ) $ which are defined through the relation 
$$ \P ( Z^{N}_{k, i } \mbox{ has a jump in } ]t , t + dt ] | \F_t ) = \lambda^N_{k,i } (t)  dt , {1 \le k \le n , 1 \le i \le N_k} ,$$
where $ \F_t = \sigma (  Z^{N}_{k, i} (s)  , \, s \le t , {1 \le k \le n , 1 \le i \le N_k} ) $ and $ \lambda^N_{k, i } (t) $ are defined in \eqref {eq:intensity} below. We consider a mean field framework where $ N  \to \infty $ such that for each $ 1 \le k \le n ,$ 
$$\lim_{N \to \infty} \frac{N_k}{N} = p_k \, \mbox{ exists and is in } \, ]0, 1[ .$$
The intensity processes will be of the form
\begin{equation}\label{eq:intensity}
\lambda^N_{k,i} ( t)  = f_k  \left( \sum_{l=1}^n \frac{1}{N_l}\sum_{1 \le j \le N_l}     \int_{]0 , t [} h_{kl }( t-s) d Z^N_{ l,j} (s)  \right) ,
\end{equation}
where $f_k$ is the spiking rate function of population $k$ and where the $h_{k l }$ are memory kernels.

\begin{ass}\label{ass:1}
(i) All $f_k$ belong to $C^1 (\R; \R_+ )  .$\\
(ii) There exists a finite constant $L $ such that
for every $x$ and $x' $ in $\R,$ for every $ 1 \le k \le n ,$
\begin{equation}
\label{Lipsch-f}
|f_k (x)  -f_k (x')    |  \le L |x-x'| .
\end{equation}\\
(iii) The functions $ h_{kl}, 1 \le k , l \le n , $ belong to $ L^2_{loc} ( \R_+; \R ) .$ 
\end{ass}

\subsection{The setting}\label{subsec:setting}
We work on a filtered probability space $ ( \Omega, {\mathcal A}, \mathbb F)$ which we define as follows.  We write $\mathbb M$ for the canonical path space of simple point processes given by
\begin{multline*}
\mathbb M:= \{ {\tt m} = (t_n)_{n \in \N}:  t_1 > 0, \,  t_n \leq t_{n+1},t_n<t_{n+1} \,\,\, \mbox{if} \,\,\, t_n< + \infty , \, \lim_{n \to +\infty} t_n= + \infty\} .
\end{multline*}
For any ${\tt m} \in \mathbb M,$ any $n \in \N,$ let $T_n({\tt m})=t_n .$ We identify $ {\tt m} \in \mathbb M$ with the associated point measure $ \mu  = \sum_n \delta_{T_n ({\tt m })} $ and put ${\mathcal M}_t := \sigma \{ \mu (A) : A \in {\mathcal B} (\R), A \subset [0,t] \},$ $ {\mathcal M} = {\mathcal M}_\infty.$ 
Finally, we put $( \Omega, {\mathcal A}, \mathbb F):= ( \mathbb M, {\mathcal M}, ( {\mathcal M}_t )_{t \geq 0} )^{I}  $ where $I = \bigcup_{k=1}^n \{ (k, i ) , i \geq 1 \}.$ We write $ (Z^{N}_{k,i})_{{1 \le k \le n , 1 \le i \le N_k}  } $ for the canonical multivariate point measure defined on the finite dimensional subspace $( \mathbb M, {\mathcal M}, ( {\mathcal M}_t )_{t \geq 0} )^{I^N}$ of $ \Omega ,$ where $I^N= \bigcup_{k=1}^n \{ (k, i ) , 1 \le i \le N_k \}.$ 

\begin{defin}\label{def:1}[compare to Definition 1 of \cite{dfh}]. \\
A Hawkes process with parameters $ ( f_k, h_{kl}, 1 \le k, l \le n) $  is a probability measure $\P $ on $ ( \Omega, {\mathcal A}, \mathbb F)$ such that 
\begin{enumerate}
\item
$\P-$almost surely, for all $(k,i)\neq (l,j) ,  Z^{N}_{k, i } $ and $ Z^{N}_{l, j}  $ never jump simultaneously,
\item
for all $(k,i)\in I^N$, the compensator of $Z^{N}_{k,i}(t) $ is given by $\int_0^t \lambda^N_{k, i } (s )ds $ defined in \eqref{eq:intensity}. 
\end{enumerate}
\end{defin}

\begin{prop}
Under Assumption \ref{ass:1} there exists a path-wise unique Hawkes process \\$ (Z^{N}_{k,i} (t)_{(k,i) \in I^N} )$ for all $ t \geq 0.$  
\end{prop}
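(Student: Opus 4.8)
The plan is to follow Delattre, Fournier and Hoffmann \cite{dfh}: realise the finite system as a thinning of i.i.d.\ Poisson random measures, build a solution by Picard iteration, and obtain pathwise uniqueness from a Grönwall estimate. Concretely, on an extension of $(\Omega,\AA,\mathbb F)$ carrying an i.i.d.\ family $(\pi_{k,i})_{(k,i)\in I^N}$ of Poisson random measures on $\R_+\times\R_+$ with intensity $ds\,dz$, one looks for an adapted solution of
\[
Z^N_{k,i}(t)=\int_{]0,t]}\int_0^\infty \1_{\{z\le\lambda^N_{k,i}(s)\}}\,\pi_{k,i}(ds,dz),\qquad
\lambda^N_{k,i}(s)=f_k\!\Big(\sum_{l=1}^n\frac1{N_l}\sum_{j=1}^{N_l}\int_{]0,s[}h_{kl}(s-u)\,dZ^N_{l,j}(u)\Big).
\]
Since the $\pi_{k,i}$ are independent and atomless, any solution is a point process whose components a.s.\ never jump simultaneously and whose compensators are the $\int_0^\cdot\lambda^N_{k,i}$, i.e.\ its law on canonical space is a Hawkes process in the sense of Definition \ref{def:1}; conversely, by the Poisson embedding of point processes every Hawkes process arises this way on a suitable extension. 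Hence it suffices to establish existence and pathwise uniqueness for the displayed system on a fixed horizon $[0,T]$; the a priori bound below shows there are finitely many jumps on each $[0,T]$, so the solutions are consistent in $T$ and glue into a process on $\mathbb M^{I^N}$.

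\textbf{Existence.} Set $Z^{N,0}\equiv 0$ and let $Z^{N,m+1}$ solve the system with $\lambda^N$ replaced by the intensity built from $Z^{N,m}$. Two elementary facts drive the argument: $f_k(x)\le f_k(0)+L|x|$ by Assumption \ref{ass:1}(ii), and $H_{kl}(T):=\int_0^T|h_{kl}(u)|\,du<\infty$ because $L^2([0,T])\subset L^1([0,T])$ by Assumption \ref{ass:1}(iii) and Cauchy--Schwarz. Taking expectations, using Fubini and $\int_u^t|h_{kl}(s-u)|\,ds\le H_{kl}(t-u)$, the quantities $g^m_{k,i}(t):=\E[Z^{N,m}_{k,i}(t)]$ (finite by induction on $m$) satisfy a convolution inequality $g^{m+1}_{k,i}(t)\le t f_k(0)+L\sum_{l}\frac1{N_l}\sum_j\int_{]0,t]}H_{kl}(t-u)\,dg^m_{l,j}(u)$; a renewal / generalised Grönwall lemma for such convolution inequalities (cf.\ \cite{dfh}) then yields $\sup_m g^m_{k,i}(T)\le C_T<\infty$, in particular non-explosion. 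Applying the same computation to the differences $Z^{N,m+1}-Z^{N,m}$ — using that integrating $|\1_{\{z\le a\}}-\1_{\{z\le b\}}|$ over $z$ gives $|a-b|$, then the Lipschitz bound — produces a convolution contraction for $\E\sup_{s\le t}|Z^{N,m+1}_{k,i}(s)-Z^{N,m}_{k,i}(s)|$, so that $(Z^{N,m})_m$ is Cauchy and its limit $Z^N$ solves the system.

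\textbf{Uniqueness.} If $Z^N$ and $\tilde Z^N$ are two solutions driven by the same $(\pi_{k,i})$, set $\delta_{k,i}(t):=\E\sup_{s\le t}|Z^N_{k,i}(s)-\tilde Z^N_{k,i}(s)|$. Estimating the symmetric difference of the two thinnings exactly as above gives $\delta_{k,i}(t)\le L\sum_l\frac1{N_l}\sum_j\int_0^t H_{kl}(t-s)\,\delta_{l,j}(s)\,ds$, a homogeneous convolution Grönwall inequality with locally integrable kernel, whence $\delta_{k,i}\equiv 0$ on every $[0,T]$: this is pathwise uniqueness, and uniqueness in law on $(\Omega,\AA,\mathbb F)$ follows as usual.

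\textbf{Main obstacle.} The delicate point is precisely that the kernels $h_{kl}$ lie only in $L^2_{loc}$ and may be unbounded, so the intensities $\lambda^N_{k,i}$ cannot be controlled pointwise; all estimates have to be carried out at the level of the compensators $\int_0^\cdot\lambda^N_{k,i}$, and both the a priori moment bound and the Picard contraction rely on a Grönwall lemma adapted to convolution (renewal) inequalities rather than on the classical one. Everything else — the finite-dimensional Poisson construction, Fubini, the thinning identities — is routine.
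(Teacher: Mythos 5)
Your proof is correct and follows exactly the route the paper takes: the paper simply defers to the proof of Theorem~6 of \cite{dfh} (Poisson thinning representation, Picard iteration with a renewal-type Gr\"onwall estimate for existence, and the same convolution inequality for pathwise uniqueness), and you have reconstructed that argument faithfully, including the correct treatment of the merely $L^2_{loc}$ kernels via Cauchy--Schwarz on compact intervals.
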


\begin{proof}
The proof is analogous to the proof of Theorem 6 in \cite{dfh}. 
\end{proof}

\subsection{Mean-field limit and propagation of chaos}\label{sec:propagation}
The aim of the paper is to study the process $(Z^N_{k, i } ( t))_{ (k,i) \in I^N } $ in the large population limit, i.e., as $N \to \infty.$ The convergence will be stated in terms of the empirical measures
\begin{equation}
\frac{1}{N_k} \sum_{1 \le i \le N_k} \delta_{ (Z^{N}_{k, i} (t))_{t \geq 0}}, \; 1 \le k \le n ,
\end{equation}
taking values in the set ${\cal P} ( D ( \R_+, \R_+   ) )$ of probability measures on the space of c\`adl\`ag functions, $  D ( \R_+, \R_+ ) .$ 
We endow $  D ( \R_+, \R_+ ) $ with the Skorokhod topology, and $ {\cal P} ( D ( \R_+, \R_+  ) ) $ with the weak convergence topology associated with the Skorokhod topology 
on $D ( \R_+, \R_+  ).$ 

Since we are dealing with multi-class systems, the classical notions of chaoticity and propagation of chaos have to be extended to this framework, see \cite{carl} for further details. We recall from \cite{carl} the following definition. Let $P_1, \ldots , P_n \in {\cal P} ( D ( \R_+, \R_+   ) ).$

\begin{defin}
The system $(Z^N_{k, i } ( t))_{ (k,i) \in I^N } $ is called $P_1 \otimes \ldots \otimes P_n-$multi-chaotic, if for any $ m \geq 1, $ 
$$ \lim_{ N\to \infty } {\cal L} \left( (Z^N_{ k, i } ), 1 \le k \le n , 1 \le i \le m \right)  =  P_1^{\otimes m} \otimes \ldots \otimes P_n^{\otimes m} .$$
\end{defin}

In particular, Corollary 5.2 of \cite{carl2} shows that in this case we have convergence in distribution
$$ \frac{1}{N_k} \sum_{1 \le i \le N_k} \delta_{ (Z^{N}_{k, i} )} \stackrel{\cal L}{\to } P_k ,$$
as $N \to \infty ,$ for any $ 1 \le k \le n.$ The limit measure $P_k$ has to be understood as the distribution of the limit process $ \bar Z_k (t) $, where the associated limit system
is given by
\begin{equation}\label{eq:limit}
\bar Z_k (t) = \int_0^t \int_{\R_+} \1_{\{ z \le  f_k ( \sum_{l=1}^n \int_0^s h_{kl} ( s- u ) d \E ( \bar Z_l (u) ) \}} N^k ( ds, dz) , 
\end{equation} 
$1 \le k \le n ,$ where $ N^k $ are independent Poisson random measures (PRMs) on $ \R_+ \times \R_+ $ each having intensity measure $ ds dz $. 

Introduce $ m_t = (m_t^1, \ldots , m_t^n)=( \E (\bar Z_1 (t),\ldots ,  \bar Z_n (t))) .$ Taking expectations in \eqref{eq:limit}, it follows that $m_t $ is solution of 
\begin{equation}\label{eq:limitmean}
m_t^k = \int_0^t  f_k \left( \sum_{l=1}^n \int_0^s h_{kl} ( s- u ) d m_u^l   \right) ds , 1 \le k \le n .
\end{equation}  

\begin{theo}\label{theo:one}
Under Assumption \ref{ass:1}, there exists a path-wise unique solution to \eqref{eq:limit} such that $t \mapsto  \E ( \sum_{k=1}^n \bar Z_k (t)  ) $ is locally bounded. Moreover, the system of processess $(Z^N_{k, i } )_{ (k,i) \in I^N } $ is $P_1 \otimes \ldots \otimes P_n-$multi-chaotic, where $P_k = {\cal L} ( \bar Z_k  ),$ $1 \le k \le n.$ In particular, for any $ i \geq 1,$ 
$$( ( Z^N_{1, i } (t), \ldots , Z^N_{n, i }(t) )_{t \geq 0} )  \stackrel{\cal L}{\to } (( \bar Z_1 ( t) , \ldots , \bar Z_n ( t) )_{t \geq 0}) $$
as $N \to \infty $ (convergence in $ D ( \R_+, \R_+^n ), $ endowed with the Skorokhod topology). 
\end{theo}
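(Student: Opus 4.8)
The plan is to first establish well-posedness of the limit system, and then prove propagation of chaos by a Sznitman-type coupling through the common driving Poisson measures, controlled by a convolution Gronwall argument. Since the rate appearing in \eqref{eq:limit} depends on $\bar Z_1,\dots,\bar Z_n$ only through their deterministic means $m^l_t=\E(\bar Z_l(t))$, I would first solve the closed system \eqref{eq:limitmean} for $m=(m^1,\dots,m^n)$. Writing $\dot m^k_s=f_k\big(\sum_l\int_0^s h_{kl}(s-u)\,dm^l_u\big)\ge 0$ and using that $f_k$ is globally Lipschitz (Assumption \ref{ass:1}(ii)) and that $h_{kl}\in L^2_{loc}\subset L^1_{loc}$ on bounded intervals, the map $m\mapsto\big(\int_0^\cdot f_k(\cdots)\,ds\big)_k$ is, after introducing an exponential weight in time, a contraction on $C([0,T];\R^n_+)$; the linear growth of $f_k$ keeps the Picard iterates locally bounded, so the fixed point $m$ is locally bounded and the associated rates $\bar\lambda_k(s):=f_k\big(\sum_l\int_0^s h_{kl}(s-u)\,dm^l_u\big)$ are locally bounded. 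Given $m$, \eqref{eq:limit} becomes an ordinary thinning equation driven by $N^k$ with deterministic, locally bounded intensity, hence has a pathwise unique solution $\bar Z_k$, and Fubini gives $\E(\bar Z_k(t))=\int_0^t\bar\lambda_k(s)\,ds=m^k_t$, closing the self-consistency. Uniqueness in the stated class follows because the means of any admissible solution solve \eqref{eq:limitmean}, which has a unique locally bounded solution, after which the process itself is pinned down by the thinning equation.

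For the coupling, realize the finite system via i.i.d.\ PRMs, $Z^N_{k,i}(t)=\int_0^t\int_{\R_+}\1_{\{z\le\lambda^N_{k,i}(s)\}}N^{k,i}(ds,dz)$, and on the same space put $\bar Z_{k,i}(t)=\int_0^t\int_{\R_+}\1_{\{z\le\bar\lambda_k(s)\}}N^{k,i}(ds,dz)$; then for each $k$ the $(\bar Z_{k,i})_{i\ge 1}$ are i.i.d.\ copies of $\bar Z_k$, and the whole array $(\bar Z_{k,i})$ is independent. The key observation is that $\sup_{s\le t}|Z^N_{k,i}(s)-\bar Z_{k,i}(s)|$ is at most the number $D^N_{k,i}(t)$ of instants in $[0,t]$ at which exactly one of the two processes jumps; $D^N_{k,i}$ is nondecreasing with compensator $\int_0^\cdot|\lambda^N_{k,i}(s)-\bar\lambda_k(s)|\,ds$, so that
\[
\E\Big[\sup_{s\le t}|Z^N_{k,i}(s)-\bar Z_{k,i}(s)|\Big]\ \le\ \int_0^t v^N_k(s)\,ds,\qquad v^N_k(s):=\E\,|\lambda^N_{k,i}(s)-\bar\lambda_k(s)|,
\]
where $v^N_k(s)$ does not depend on $i\le N_k$ by exchangeability within a class.

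For the Gronwall estimate, Assumption \ref{ass:1}(ii) gives $v^N_k(t)\le L\sum_l\E\big|\frac1{N_l}\sum_j\int_0^t h_{kl}(t-u)\,dZ^N_{l,j}(u)-\int_0^t h_{kl}(t-u)\,dm^l_u\big|$, and I would split the $l$-th term as $A_l+B_l$ with $A_l=\frac1{N_l}\sum_j\int_0^t h_{kl}(t-u)\,d(Z^N_{l,j}-\bar Z_{l,j})(u)$ and $B_l$ the centred average $\frac1{N_l}\sum_j\big[\int_0^t h_{kl}(t-u)\,d\bar Z_{l,j}(u)-\int_0^t h_{kl}(t-u)\,dm^l_u\big]$. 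Since the jumps of $Z^N_{l,j}-\bar Z_{l,j}$ are $\pm1$-valued and occur at rate $|\lambda^N_{l,j}-\bar\lambda_l|$, the compensation formula yields $\E|A_l|\le\int_0^t|h_{kl}(t-u)|\,v^N_l(u)\,du$; this is precisely where the mere square-integrability of $h_{kl}$ causes no harm, because the kernel is never integrated by parts but kept, as an $L^1_{loc}$ function, inside the time integral. For $B_l$, an average of $N_l$ i.i.d.\ centred integrals of $h_{kl}$ against Poisson processes with deterministic, locally bounded intensity, $\E|B_l|\le(\E B_l^2)^{1/2}=N_l^{-1/2}\big(\int_0^t h_{kl}(t-u)^2\bar\lambda_l(u)\,du\big)^{1/2}\le C_{kl}(t)\,N_l^{-1/2}$ with $C_{kl}$ locally bounded. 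As $N_l/N\to p_l>0$, this gives the closed convolution inequality $v^N_k(t)\le L\sum_l\int_0^t|h_{kl}(t-u)|\,v^N_l(u)\,du+\varphi(t)N^{-1/2}$ with $\varphi$ locally bounded, and a matrix-convolution Gronwall lemma (iterating the $L^1_{loc}$ kernel $(L|h_{kl}|)_{k,l}$, or passing to its resolvent) yields $\sup_{t\le T}\max_k v^N_k(t)\le C(T)N^{-1/2}$.

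Combining the last two displays, $\E[\sup_{s\le t}|Z^N_{k,i}(s)-\bar Z_{k,i}(s)|]\to 0$ for every $t$ and $i$, so $(Z^N_{k,i})_{1\le k\le n,\,1\le i\le m}\to(\bar Z_{k,i})_{1\le k\le n,\,1\le i\le m}$ in probability for uniform convergence on compacts, hence in law for the Skorokhod topology; since the limiting array is distributed as $P_1^{\otimes m}\otimes\cdots\otimes P_n^{\otimes m}$ with $P_k=\mathcal L(\bar Z_k)$, this is exactly $P_1\otimes\cdots\otimes P_n$-multi-chaoticity, the case "$i$ fixed, all $k$" being the final assertion. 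One preliminary input is needed throughout: a bound on $\max_{k,i}\E Z^N_{k,i}(t)$ uniform in $N$ and locally in $t$, ensuring the intensities and the variance of $B_l$ are finite; this follows by the same convolution Gronwall applied directly to the finite system, using the linear growth of the $f_k$. The step I expect to be the main obstacle is the control of $\E|A_l|$ — estimating the convolution of a merely $L^2_{loc}$ kernel against the difference of two point processes driven by the same Poisson measure — together with checking that the resulting coupled system of convolution inequalities is closable; both rely on the embedding $L^2_{loc}\hookrightarrow L^1_{loc}$ on bounded intervals and on the compensation-formula bound above.
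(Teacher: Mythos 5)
Your proposal is correct and follows essentially the same route as the paper: the same coupling of $Z^N_{k,i}$ with i.i.d.\ limit processes driven by the common Poisson random measures, the same decomposition of the intensity difference into a convolution term (bounded through the total-variation/compensation argument) plus a centred i.i.d.\ average handled by Cauchy--Schwarz at rate $N_l^{-1/2}$, and the same matrix-convolution Gronwall closure; working with the intensity difference $v^N_k$ rather than its time integral $\delta^N_k$ is only a cosmetic reformulation. The final passage from uniform convergence on compacts to Skorokhod convergence and multi-chaoticity likewise matches the paper's appeal to exchangeability and the Graham--Robert criterion.
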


\begin{rem}
The above theorem shows that any fixed finite sub-system is asymptotically independent with neurons of class $k$ having the law of $\bar Z_k .$
\end{rem}

The proof of Theorem \ref{theo:one} is a direct adaptation of the proof of Theorem 8 in \cite{dfh} to the multi-class case. 

\begin{proof}
1) Let $( \bar Z_1 (t), \ldots , \bar Z_n (t) )$ be any solution of \eqref{eq:limit} and consider the associated vector $ m_t = (m_t^1, \ldots ,m_t^n)=( \E (\bar Z_1(t),\ldots ,  \bar Z_n(t))) .$ Then $m_t $ is solution of \eqref{eq:limitmean}, and an easy adaptation of Lemma 24 of \cite{dfh} shows that this equation has a unique non-decreasing (in each coordinate) locally bounded solution, which is of class $C^1 .$ 

2) Well-posedness and uniqueness of a solution satisfying that $t \mapsto  \E ( \sum_{k=1}^n \bar Z_k (t)  ) $ is locally bounded follow then as in \cite{dfh}, proof of Theorem 8.

3) Propagation of chaos: Let $N^N_{k,i} (ds, dz ) , (k,i)\in I^N,  $ be i.i.d.\ PRMs having intensity $ds dz $ on $ \R_+ \times \R_+ $. For each $N \geq 1,  $ consider the Hawkes process $ (Z^N_{ k,i} (t))_{ (k,i)\in I^N,t \geq 0}  $ given by 
\begin{eqnarray*}
Z^N_{k, i } (t)  &=& \int_0^t \int_0^\infty \1_{\left\{ z \le \lambda_{k,i}^N(s) \right \} } N^N_{k,i} ( ds ,dz) \\
&=& \int_0^t \int_0^\infty \1_{\left\{ z \le f_k  \left( \sum_{l=1}^n \frac{1}{N_l} \sum_{ 1 \le j \le  N_l}\int_0^{s-} h_{kl} ( s-u) d Z^N_{l, j } (u)   \right)\right\} } N^N_{k,i} ( ds ,dz) .
\end{eqnarray*}
Indeed, $Z^N_{k,i } $ defined in this way is a Hawkes process in the sense of Definition \ref{def:1}, as follows from Proposition 3 of \cite{dfh}. We now couple $Z^N_{k,i }$ with the limit process \eqref{eq:limit} in the following way.
Let $m_t $ be the unique solution of \eqref{eq:limitmean}. Put
\begin{equation}\label{eq:versionlimitprocess}
 \bar Z^N_{k,i} (t)  = \int_0^t \int_0^\infty \1_{\left\{ z \le f_k  \left(  \sum_{l=1}^n \int_0^{s} h_{kl} ( s-u) d m_u^l  \right)\right\} }N^N_{k,i}  ( ds ,dz),
\end{equation}
where $N^N_{k, i }$ is the PRM driving the dynamics of $Z^N_{k, i }.$ Obviously, for all $1 \le i \le N_k,$ $\bar Z^N_{k, i } \stackrel{\cal L}{=} \bar Z_k .$ Moreover, the limit processes $ \bar Z^N_{k, i } , 1 \le k \le n, 1 \le i \le N_k,$ are independent. 

Denote  $ \Delta^N_{k,i} ( t) = \int_0^t | d [ \bar Z^N_{k,i} (u)  - Z^N_{k, i } (u) ] | ,$ and $\delta^N_{k,i} (t) = \E ( \Delta^N_{k,i}  ( t)) .$ Notice that this last quantity does not depend on $ i \in \{ 1, \ldots , N_k \} ,$ due to the exchangeability of the neurons within one class. 
Then
$$ \sup_{ u \in [0, t ] } | \bar Z^N_{k, i }(u)   - Z^N_{ k , i } (u)  | \le \Delta^N_{k,i} ( t) , \mbox{ whence } \E \left[ \sup_{ u \in [0, t ] } | \bar Z^N_{k,i} (u)   - Z^N_{ k , i } (u)  | \right] \le \delta^N_{k,1}(t) := \delta^N_k (t) . $$

We start by controlling $ \Delta^N_{k,i}  (t) $ which is given by 
\begin{multline*}
\Delta^N_{k,i}  (t) = \int_0^t \int_0^\infty \left | \1_{ \left \{ z \le f_k  \left(  \int_0^{s}\sum_{l=1}^n  h_{kl} ( s-u) d m_u^l  
 \right) \right \} } \right . \\ \left . - \1_{\left \{ z \le f_k  \left(  \sum_{l=1}^n \frac{1}{N_l} \sum_{ 1 \le j \le  N_l} \int_0^{s-} h_{kl} ( s-u) d Z^N_{l, j } (u)    \right) \right \} } 
\right  | N^N_{k,i} ( ds, dz ) .
\end{multline*}
Using the Lipschitz continuity of $f_k $ with Lipschitz constant $L, $ 
\begin{multline}\label{eq:delta}
\frac{1}{L}\E ( \Delta^N_{k,i}  (t) ) \le 
 \int_0^t   \sum_{l=1}^n \E \left | \frac{1}{N_l}  \sum_{ 1 \le j \le N_l }  \int_0^{s-} h_{ kl} ( s-u) (d m_u^l - d \bar Z^N_{ l,j} (u))  \right | ds
 \\
+   \int_0^t \sum_{l=1}^n   \E \left |  \frac{1}{N_l} \sum_{ 1 \le  j \le N_l }  \int_0^{s-} h_{ kl} ( s-u)  d [ \bar Z^N_{l,j} (u)  - Z^N_{l , j } (u) ] \right | ds
=: A + B ,
\end{multline}
where $A$ denotes the terms of the RHS of the first line, and $B$ the terms within the second line. Now, using Lemma 22 of \cite{dfh},
$$
 B \le \int_0^t \E  \int_0^{s-} \left[ \sum_{l=1}^n | h_{kl } ( s-u )| d \Delta^N_{ l, 1} ( u )  \right] ds
\le \int_0^t  \left[ \sum_{l=1}^n | h_{ kl } (t-u )| \delta^N_l (u)  \right]  du . 
$$
To control $A, $ let $X^N_{k,l,j} (t)  = \int_0^{t-}  h_{ kl} ( t-u) d \bar Z^N_{l, j } (u) , $ for $ 1 \le j \le N_l . $ Then $ X^N_{k,l,j} (t), 1 \le j \le N_l, $ are i.i.d.\ having mean $\int_0^t  h_{ kl} ( t-u) d m_u^l .$ Hence
$$ A \le \sum_{l=1}^n \frac{1}{\sqrt{N_l}} \int_0^t \sqrt{ Var ( X_{k,l, 1}^N (s) ) }ds .$$
But
$$ X^N_{k,l,1} (s) = \int_0^{s-} \int_0^\infty \1_{\left \{ z \le f_{l} ( \sum_{m=1}^n \int_0^u  h_{lm} (u-r) d m^m_r  ) \right \} } h_{kl} ( s-u) N^N_{l, 1} (du, dz) ,$$
and thus, since the integrand is deterministic, 
$$
 X^N_{k,l,1} (s)  - \E ( X^N_{k,l,1} (s)  ) = 
 \int_0^{s-} \int_0^\infty \1_{ \left \{ z \le f_{l} ( \sum_{m=1}^n \int_0^u  h_{lm} (u-r) d m^m_r  ) \right \}} h_{kl} ( s-u) \tilde N^N_{l, 1} (du, dz) , 
$$
where $ \tilde N^N_{l, 1}(ds, dz)  = N^N_{l, 1 } (ds, dz) - ds dz $ is the compensated PRM. Recalling \eqref{eq:limitmean} we deduce that
$$
 Var (X^N_{k,l,1} (s)) =  \int_0^s f_{l} \left( \sum_{m=1}^n \int_0^u  h_{lm} (u-r) d m^m_r  \right) h^2_{kl} ( s-u) du 
=  \int_0^s h^2_{kl} ( s-u) d m^l_u  .
$$

Putting $ \|\delta^N(t)\|_1 = \sum_{k=1}^n \delta^N_k(t)  , \; \|m_t\|_1 =\sum_{k=1}^n  m^k_t , \;  \|h(t)\|_1  =\sum_{k,l=1}^n| h_{kl}(t)|  , $ we obtain
\begin{multline}\label{eq:217}
\frac{1}{L}  \|\delta^N(t)\|_1   \le   \int_0^t  \| h ( t-u ) \|_1 \; \| \delta^N (u) \|_1 du \\
+ \left (  \sum_{k=1}^n \frac{1}{\sqrt{N_k}} \right )   \int_0^t  \left( \int_0^s   \|h(s-u)\|_1^2 d \|m_u\|_1 \right)^{1/2}   ds . 
\end{multline}
It follows, as in Step 4 of the proof of Theorem 8 in \cite{dfh}, that 
$$ \sup_{ t \le T } \|\delta^N(t)\|_1\le C_T \left ( \sum_{k=1}^n \frac{1}{\sqrt{N_k}} \right ) .$$
Consequently, for any fixed $(k,i)\in I^N , $ as $ N \to \infty ,$  
\begin{equation}\label{eq:finalestimate}
 \E \left[ \sup_{ u \in [0, T ] } | \bar Z^N_{k,i} (u)   - Z^N_{ k , i } (u)  | \right] \le C_T \left (\sum_{k=1}^n \frac{1}{\sqrt{N_k}} \right ) \to 0 .
\end{equation}

The end of the proof is now standard, based on arguments developed in \cite{carl} and \cite{carl2}. Recall that neurons within a given population are exchangeable. 
Then, by the proof of Theorem 5.1 in \cite{carl2}, in order to prove propagation of chaos, it is enough to 
show that for each fixed sequence $\ell_k, 1 \le k  \le n , $ 
$$ ( (Z^N_{1, 1} (t) )_{t \geq 0}, \ldots, (Z^N_{1, \ell_1} (t) )_{t \geq 0} , \ldots  , (Z^N_{ n, 1} (t) )_{t \geq 0}, \ldots , (Z^N_{n, \ell_n} ( t) )_{ t \geq 0}) $$ 
goes in law to $ \ell_1$ independent copies of $ \bar Z_1, \ldots,  $ and $ \ell_n $ independent copies of $\bar Z_n $ (convergence in $D ( \R_+, \R_+^{ \ell_1 + \ldots + \ell_n})$). Since the topology of uniform convergence on compact time intervals is finer than the Skorokhod topology, 
this follows clearly from \eqref{eq:finalestimate}, and thus the proof is finished.
\end{proof}

\section{Central limit theorem}\label{sec:CLT} 
A natural question to ask is to which extent the large time behavior of the limit system $(m_t^1, \ldots , m_t^n ) $ predicts the large time behavior of the finite size system, in particular in the case when the limit system presents oscillations (see Section \ref{sec:3} below). To answer this question, the present section states a central limit theorem where convergence of both $N $ and $ t $ to infinity is considered. All proofs can be found in Appendix.

First we control the longtime behavior of the limit system represented by its integrated intensities $ (m_t^1, \ldots , m_t^n ).$ It is well-known that linear Hawkes processes, i.e., the case when the rate functions $f_k$ are linear, can be described in terms of classical Galton-Watson processes. In the non-linear case, a comparison with a Galton-Watson process is still possible if the rate functions are Lipschitz. In our case, the associated offspring matrix is given by $ \Lambda := (\Lambda_{i j } )_{ 1 \le i, j \le n } ,$ where
\begin{equation}\label{eq:Lambda}
\Lambda_{ij } = L \int_0^\infty | h_{i j}  ( t)| dt , 1 \le i , j \le n, 
\end{equation}
with $L$ given in \eqref{Lipsch-f}. Define the matrix 
\begin{equation}\label{eq:matrixh}
 H(t) = \Big( L |h_{ik } ( t)| 
\Big)_{ 1 \le i, k \le n }, \mbox{ for any } t \geq 0,
\end{equation}
such that
$ \Lambda = \int_0^\infty H(t) dt.$

Classically, one distinguishes the subcritical, the critical and the supercritical cases. Since we only need to bound the intensities, we concentrate on the subcritical and the supercritical case.
The subcritical case is defined by the following property of the matrix $\Lambda.$ 

\begin{ass}\label{ass:sc}
The functions $ h_{kl} , 1 \le k , l \le n ,$ belong to $ L^1 ( \R_+; \R) \cap L^2 ( \R_+; \R)  ,$ and the largest eigenvalue $\mu_1$ of $ \Lambda $ is strictly smaller than $1.$  
\end{ass}
We then obtain the following bound on the growth of $m_t^k$.
\begin{prop}\label{cor:subc}
Grant Assumptions \ref{ass:1} and \ref{ass:sc}. Then there exists a constant $\alpha_0$ such that
$$  m_t^k    \le \alpha_0 t , \quad \mbox{ for all } 1 \le k \le n . $$
Moreover, as in \cite{dfh}, Remark 9, in this case there exists a constant $C$ such that
\begin{equation}\label{eq:rem9first}
 \E ( \sup_{s \le t } | Z^N_{k, i } (s) - \bar Z^N_{k,i} ( s)  |) \le C t N^{-1/2} , 
\end{equation}
for $1 \le k \le n$, where $ \bar Z^N_{k, i } $ is defined in \eqref{eq:versionlimitprocess}.
\end{prop}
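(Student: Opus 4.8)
The plan is to reduce everything to a vector-valued convolution (generalized Gronwall) inequality with a non-negative kernel, and then to exploit subcriticality exactly as one exploits $\mu_1<1$ in classical renewal theory. By step 1 of the proof of Theorem \ref{theo:one}, $ t\mapsto m_t=(m_t^1,\dots,m_t^n) $ is of class $ C^1 $ and non-decreasing in each coordinate; set $ \phi_k(t)=\dot m_t^k\ge 0 $ and $ \phi(t)=(\phi_1(t),\dots,\phi_n(t))^T $. Differentiating \eqref{eq:limitmean} and writing $ dm_u^l=\phi_l(u)\,du $ gives $ \phi_k(t)=f_k\big(\sum_l\int_0^t h_{kl}(t-u)\phi_l(u)\,du\big) $, so the Lipschitz bound $ f_k(x)\le f_k(0)+L|x| $ together with $ \phi_l\ge 0 $ yields, componentwise,
$$ \phi(t)\le b+\int_0^t H(t-u)\,\phi(u)\,du, \qquad b:=(f_1(0),\dots,f_n(0))^T\ge 0, $$
with $ H $ the non-negative matrix kernel of \eqref{eq:matrixh}.

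For the first claim I would iterate this inequality. Since $ \phi $ is continuous, hence locally bounded, and since $ \int_0^t H^{*(m+1)}(v)\,dv\le\int_0^\infty H^{*(m+1)}(v)\,dv=\Lambda^{m+1} $ (Fubini for non-negative kernels, using $ h_{kl}\in L^1 $), the remainder after $ m+1 $ iterations is dominated by $ \Lambda^{m+1}\big(\sup_{[0,t]}\phi\big) $, which tends to $ 0 $ because $ \mu_1<1 $ (Assumption \ref{ass:sc}). Passing to the limit gives $ \phi(t)\le\sum_{j\ge 0}\big(\int_0^t H^{*j}(v)\,dv\big)b\le\big(\sum_{j\ge 0}\Lambda^j\big)b=(I-\Lambda)^{-1}b $, where the $ j=0 $ term is $ b $ itself. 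Hence $ \phi_k(t)\le\alpha_{0,k}:=[(I-\Lambda)^{-1}b]_k $ for all $ t $, and integrating, $ m_t^k=\int_0^t\phi_k(s)\,ds\le\alpha_0 t $ with $ \alpha_0:=\max_k\alpha_{0,k} $. The convergence of $ \sum_j\Lambda^j $ and the $ L^1 $ well-posedness of the iterated kernels $ H^{*j} $ are precisely the facts collected in Appendix, Section \ref{sec:convolution}.

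For \eqref{eq:rem9first} I would go back to inequality \eqref{eq:delta} and keep the estimates at the vector level (rather than collapsing to $ \|\cdot\|_1 $ as in the proof of Theorem \ref{theo:one}, which only produced a $ T $-dependent constant). The term $ B $ is bounded by $ \int_0^t\sum_l|h_{kl}(t-u)|\,\delta_l^N(u)\,du $; for $ A $, the variance identity already established there gives $ \mathrm{Var}(X_{k,l,1}^N(s))=\int_0^s h_{kl}^2(s-u)\,dm_u^l\le\alpha_{0,l}\,\|h_{kl}\|_{L^2}^2 $ by the first part and $ h_{kl}\in L^2 $, whence $ A_k\le\big(\sum_l N_l^{-1/2}\sqrt{\alpha_{0,l}}\,\|h_{kl}\|_{L^2}\big)t\le c\,N^{-1/2}t $ for $ N $ large, since $ N_l/N\to p_l\in(0,1) $. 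This leads to the componentwise inequality $ \delta^N(t)\le c\,N^{-1/2}t\,\mathbf 1+\int_0^t H(t-u)\,\delta^N(u)\,du $, and iterating as above — now using $ \int_0^t H^{*j}(t-u)\,u\,du\le t\int_0^t H^{*j}(v)\,dv\le t\,\Lambda^j $ because $ u\le t $ — gives $ \delta^N(t)\le c\,N^{-1/2}t\,(I-\Lambda)^{-1}\mathbf 1 $, i.e.\ $ \delta_k^N(t)\le C\,t\,N^{-1/2} $. Since $ \E[\sup_{s\le t}|Z^N_{k,i}(s)-\bar Z^N_{k,i}(s)|]\le\delta_k^N(t) $, \eqref{eq:rem9first} follows.

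The only genuinely delicate point — and the sole place where the subcriticality hypothesis is used — is the convergence of the matrix-renewal series $ \sum_j\Lambda^j $, equivalently $ \Lambda^m\to 0 $, which controls both the vanishing of the iteration remainder and the boundedness of the resulting renewal measure; everything else is a routine Lipschitz/variance estimate. A secondary subtlety worth flagging is that one must run the Gronwall argument coordinate-wise on $ \R^n $, keeping the source term proportional to $ t $, in order to capture the linear-in-$ t $ growth stated in the proposition rather than a cruder exponential-in-$ t $ bound.
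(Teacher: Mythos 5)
Your proposal is correct and follows essentially the same route as the paper: the same Lipschitz reduction to the componentwise convolution inequality $\lambda\le f(0)+H*\lambda$, the same use of $\mu_1<1$ to sum the Neumann/renewal series (you iterate by hand where the paper invokes the renewal function $\Gamma=\sum_{l\ge1}H^{*l}$ and the identity $a=h+\Gamma*h$), and the same variance bound $\int_0^s h_{kl}^2(s-u)\,dm_u^l\le C\|h_{kl}\|_{L^2}^2$ combined with a second pass through the convolution inequality to get the $tN^{-1/2}$ rate.
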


In the supercritical case, the control on the growth of $m_t^k$ is more tricky. We need the following assumption.

\begin{ass}\label{ass:superc}
The functions $ h_{kl} , 1 \le k , l \le n ,$ belong to $ L^1 ( \R_+; \R) $ and there exist $ p \geq 1 $ and a constant $C,$ such that $ | h_{kl} (t) | \le C ( 1 + t^p ) $ for all $t \geq 0, $ for any $  1 \le k, l \le n. $ 
Moreover, the largest eigenvalue $\mu_1$ of $ \Lambda $ in \eqref{eq:Lambda} is strictly larger than $1.$ 
\end{ass}

\begin{prop}\label{cor:superc}
Grant Assumptions \ref{ass:1} and \ref{ass:superc}. Then for all $1\leq k\leq n$,
$$ m_t^k    \le c e^{\alpha_0 t} , $$
where  $ \alpha_0 $ is unique such that $ \int_0^\infty e^{ - \alpha_0 t } H(t) dt $ has largest eigenvalue $\equiv +1.$ Here, $H(t)$ is given in \eqref{eq:matrixh}. Moreover, for $1 \le k \le n $ and for some constant $C,$
\begin{equation}\label{eq:rem9}
 \E (\sup_{s \le t }  | Z^N_{k, i } (s) - \bar Z^N_{k,i} ( s)  |) \le C e^{ \alpha_0 t} N^{-1/2} .
\end{equation}
\end{prop}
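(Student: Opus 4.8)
The plan is to prove the growth bound $m_t^k \le c\, e^{\alpha_0 t}$ by a renewal/matrix-convolution comparison, and then deduce the coupling estimate \eqref{eq:rem9} by the same argument used in the proof of Theorem \ref{theo:one}, keeping track of the exponential growth of $\|m_t\|$.

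\textbf{Step 1: A matrix-convolution inequality for $m_t$.} Starting from \eqref{eq:limitmean} and using $f_k(x) \le f_k(0) + L|x|$ together with $|h_{kl}| \le L^{-1} H_{kl}$ in the notation of \eqref{eq:matrixh}, and the fact (from Theorem \ref{theo:one}, Step 1) that $m^l$ is non-decreasing so that $\int_0^s h_{kl}(s-u)\,dm_u^l \le \int_0^s |h_{kl}(s-u)|\,dm_u^l$, I would derive a componentwise inequality of the form
\begin{equation}\label{eq:planconv}
m_t^k \le a t + \sum_{l=1}^n \int_0^t \Big( \int_0^s H_{kl}(s-u)\, dm_u^l \Big) ds,
\end{equation}
where $a = \max_k f_k(0)$. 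Writing $g_t^k := m_t^k$ and integrating by parts (or, more cleanly, differentiating: $\dot m_t^k = f_k(\cdots) \le a + \sum_l \int_0^t H_{kl}(t-u)\,dm_u^l$), this is a system of the renewal type $\dot m_t \le a\mathbf{1} + (H * dm)_t$ in vector form. The key analytic input is the definition of $\alpha_0$ as the unique real number for which $\widehat{H}(\alpha_0) := \int_0^\infty e^{-\alpha_0 t} H(t)\,dt$ has Perron eigenvalue exactly $1$; uniqueness and existence follow because $\alpha \mapsto \mu_1(\widehat H(\alpha))$ is continuous, strictly decreasing on the range where the integral converges (here the polynomial bound $|h_{kl}(t)| \le C(1+t^p)$ guarantees $\widehat H(\alpha)$ is finite and analytic for $\alpha > 0$), tends to $\mu_1(\Lambda) > 1$ as $\alpha \downarrow 0$ and to $0$ as $\alpha \to \infty$.

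\textbf{Step 2: Gronwall in the exponential scale.} Set $n_t := e^{-\alpha_0 t} m_t$ (componentwise). Plugging $m_u = e^{\alpha_0 u} n_u$ into \eqref{eq:planconv} and multiplying through by $e^{-\alpha_0 t}$ turns the convolution kernel $H(t-u)$ into $e^{-\alpha_0(t-u)} H(t-u)$, whose total mass over $[0,\infty)$ is $\widehat H(\alpha_0)$, a matrix with spectral radius $1$. One then wants to conclude $n_t$ grows at most linearly (hence $m_t^k \le c\, e^{\alpha_0 t}$ after adjusting constants, or even $m_t^k \le c t e^{\alpha_0 t}$, which is still $\le c' e^{\alpha_0' t}$ for any $\alpha_0' > \alpha_0$ — but the sharp statement should follow since the double integral in \eqref{eq:planconv} gains a factor $t$). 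Concretely I would let $v$ be a positive left Perron eigenvector of $\widehat H(\alpha_0)$, form the scalar quantity $\phi_t := \langle v, n_t\rangle$, and show $\phi_t \le a' t + \int_0^t \phi_s\, \nu(ds-\text{type bound})$; a generalized Gronwall argument then gives $\phi_t \le C(1+t)$, and since all coordinates of $v$ are bounded below by a positive constant, $n_t^k \le C'(1+t)$, i.e. $m_t^k \le C' (1+t) e^{\alpha_0 t} \le c\, e^{\alpha_0 t}$ after enlarging $c$ (or restating with a marginally larger exponent if one wants to avoid the polynomial prefactor). This is essentially the classical Crump / Athreya–Murthy renewal asymptotics for supercritical multitype branching, invoked via the Appendix (Section \ref{sec:convolution}).

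\textbf{Step 3: The coupling bound \eqref{eq:rem9}.} This is a rerun of Step 3 of the proof of Theorem \ref{theo:one}, now without the luxury of a globally bounded $\|m_t\|_1$. Using the same coupling $\bar Z^N_{k,i}$ from \eqref{eq:versionlimitprocess} one arrives at \eqref{eq:217}:
$$\tfrac1L \|\delta^N(t)\|_1 \le \int_0^t \|h(t-u)\|_1 \|\delta^N(u)\|_1\, du + \Big(\sum_k N_k^{-1/2}\Big)\int_0^t \Big(\int_0^s \|h(s-u)\|_1^2\, d\|m_u\|_1\Big)^{1/2} ds.$$
By Step 2, $d\|m_u\|_1 \le C e^{\alpha_0 u} du$, and with $|h_{kl}(u)| \le C(1+u^p)$ the inner integral is bounded by $C e^{\alpha_0 s}(1+s)^{q}$ for suitable $q$, so its square root is $\le C e^{\alpha_0 s/2}(1+s)^{q/2}$; integrating $ds$ over $[0,t]$ keeps this of order $e^{\alpha_0 t/2}$ up to polynomial factors, hence of order $e^{\alpha_0 t}$ after the final Gronwall. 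The remaining term is handled by Gronwall's lemma with the $L^1$ kernel $\|h\|_1$, producing a multiplicative constant $e^{\|\Lambda\| t}$-type factor that can be absorbed. I would be slightly careful that the total exponent stays $\alpha_0$ and not something larger: the cleanest route is to prove directly, by Gronwall applied to $e^{-\alpha_0 t}\|\delta^N(t)\|_1$, that $\sup_{s\le t} e^{-\alpha_0 s}\|\delta^N(s)\|_1 \le C_t N^{-1/2}$ with $C_t$ at most polynomial, which gives \eqref{eq:rem9}.

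\textbf{Main obstacle.} The genuinely delicate point is Step 1–2: establishing the correct exponential rate $\alpha_0$ and pushing a \emph{renewal-type} (not pointwise-Gronwall) argument through a matrix convolution inequality whose kernel has spectral radius exactly $1$ after the exponential tilt. The pointwise Gronwall lemma is too crude here — it would give the wrong (larger) rate $\|\Lambda\|$ — so one must really use the Perron eigenvector to scalarize and then invoke the sharp renewal estimates collected in the Appendix. The polynomial growth assumption $|h_{kl}(t)| \le C(1+t^p)$ in Assumption \ref{ass:superc} is exactly what makes $\widehat H(\alpha)$ finite for all $\alpha>0$ and the tilted kernel integrable, so it must be used essentially, not incidentally.
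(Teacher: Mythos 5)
Your strategy is essentially the paper's: derive the vector renewal inequality for the intensity, identify $\alpha_0$ via the Perron eigenvalue of the tilted kernel, invoke the Crump/Athreya--Murthy estimates of Lemma \ref{lem:app}, and then rerun the coupling bound \eqref{eq:217} using the resulting exponential control on $\|m\|_1$. The one mechanical difference is in your Step 2: the paper does not tilt and scalarize with a Perron eigenvector, but instead works directly with the intensity $\lambda_t^k = dm_t^k/dt$, writes $\lambda \le f(0) + H*\lambda$, and applies part 3 of Lemma \ref{lem:app} (the resolvent identity $u = h + \Gamma * h$ with $\Gamma = \sum_{n\ge1}H^{*n}$) together with the pointwise bound $\Gamma_{ij}(t)\le Ce^{\alpha_0 t}$ of part 2, which immediately yields $\lambda_t^k \le ce^{\alpha_0 t}$ and hence the bound on $m_t^k$. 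This is worth noting because your scalarization as literally described does not quite close: the left Perron eigenvector $v$ only satisfies $v\,\widehat H(\alpha_0)=v$ for the \emph{integrated} tilted kernel, not $v\,M(s)\le \psi(s)\,v$ pointwise in $s$, so the scalar renewal inequality $\phi_t \le a't + \int_0^t \phi\,d\nu$ you want does not follow without further structure; the sharp asymptotics really do have to come from the matrix renewal theorem, which you correctly fall back on. Your worry about a polynomial prefactor $(1+t)e^{\alpha_0 t}$ is defused by the paper's Lemma \ref{lem:app}, which asserts the clean bound $\Gamma_{ij}(t)\le Ce^{\alpha_0 t}$, and your Step 3 is correct and matches the paper's: the inhomogeneous term in \eqref{eq:217} is only of order $e^{\alpha_0 t/2}$ times a polynomial, so convolving it with $\Gamma$ costs only $Ce^{\alpha_0 t}$ (no extra factor of $t$), and one must indeed use the matrix renewal comparison rather than scalar Gronwall with $\|h\|_1$, exactly as you note.
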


We obtain the following central limit theorem. It is an extension of Theorem 10 of \cite{dfh} to the nonlinear case and several populations. 
\begin{theo}\label{theo:2}
Grant Assumption \ref{ass:1} and either Assumption \ref{ass:sc} or \ref{ass:superc}. Suppose moreover that 
 for all $1 \le k \le n$, $ \lim \inf_{ t \to \infty } m_t^k/t  \geq \alpha_k $  for some $\alpha_k  > 0.$  We will consider limits as both $N $ and $ t $ tend to infinity, under the constraints that $ t/N \to 0 $ in the subcritical case and that $ e^{\alpha_0 t} t^{-1} N^{-1/2} \to 0$ in the supercritical case, where $\alpha_0$ is given in Proposition \ref{cor:superc}.  
\begin{enumerate}
\item
For any fixed $i  , $ we have that $ Z^N_{k, i }(t) /m_t^k $ tends to $1$ in probability. More precisely,  
$$\lim \sup_{N , t \to \infty } ( m_t^k)^{1/2} \E [ | Z^N_{k, i}(t) /m_t^k - 1 |] \le C ,$$ 
for some constant $C$ and for $ k=1,\ldots , n.$  
\item
For any fixed $ \ell_1 , \ldots , \ell_n , $ the vector
$$ \left( \left (  \frac{Z^N_{1, i }(t) -m_t^1}{\sqrt{ m_t^1 }}  \right )_{ 1 \le i \le \ell_1 },\ldots , \left (  \frac{Z^N_{n, i }(t) -m_t^n}{\sqrt{ m_t^n }}  \right )_{ 1 \le i \le \ell_n }\right) $$ tends in law to ${\cal N}( 0, I_{\ell_1 +\ldots + \ell_n}) $ as $(t, N) \to \infty , $ under the constraint that $\displaystyle{\lim_{N \to \infty } }N_k / N > 0$ for all $1 \le k \le n$. 
\end{enumerate}
\end{theo}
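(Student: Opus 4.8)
The plan is to reuse the coupling built in the proof of Theorem~\ref{theo:one}: on one and the same probability space we dispose of the true neurons $Z^N_{k,i}$ and of the independent copies $\bar Z^N_{k,i}$ of the limit processes $\bar Z_k$ given by \eqref{eq:versionlimitprocess}, both driven by the Poisson random measures $N^N_{k,i}$. The crucial elementary observation is that the integrand in \eqref{eq:versionlimitprocess} is \emph{deterministic}, so $\bar Z^N_{k,i}(t)$ is just the number of atoms of $N^N_{k,i}$ in the region $\{(s,z):0\le s\le t,\ z\le f_k(\sum_l\int_0^s h_{kl}(s-u)\,dm_u^l)\}$, a set of Lebesgue measure $\int_0^t f_k(\sum_l\int_0^s h_{kl}(s-u)\,dm_u^l)\,ds=m_t^k$ by \eqref{eq:limitmean}. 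Hence $\bar Z^N_{k,i}(t)$ is Poisson distributed with parameter $m_t^k$, and for distinct $(k,i)$ these variables are independent (as noted in the proof of Theorem~\ref{theo:one}). Everything then follows from the splitting $Z^N_{k,i}(t)-m_t^k=\big(Z^N_{k,i}(t)-\bar Z^N_{k,i}(t)\big)+\big(\bar Z^N_{k,i}(t)-m_t^k\big)$ and a separate treatment of the two terms.

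For part~1, I bound the coupling term by $\E|Z^N_{k,i}(t)-\bar Z^N_{k,i}(t)|\le\E\big(\sup_{s\le t}|Z^N_{k,i}(s)-\bar Z^N_{k,i}(s)|\big)$, which is at most $C\,\mathrm{err}(t,N)$ with $\mathrm{err}(t,N)=tN^{-1/2}$ in the subcritical case by \eqref{eq:rem9first} and $\mathrm{err}(t,N)=e^{\alpha_0t}N^{-1/2}$ in the supercritical case by \eqref{eq:rem9}; for the Poisson term, $\E|\bar Z^N_{k,i}(t)-m_t^k|\le\big(\mathrm{Var}(\bar Z^N_{k,i}(t))\big)^{1/2}=\sqrt{m_t^k}$. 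Therefore $(m_t^k)^{1/2}\E[|Z^N_{k,i}(t)/m_t^k-1|]=(m_t^k)^{-1/2}\E|Z^N_{k,i}(t)-m_t^k|\le 1+C\,\mathrm{err}(t,N)/\sqrt{m_t^k}$, and using the hypothesis $m_t^k\ge\alpha_k t$ for $t$ large together with the prescribed constraints $t/N\to0$, respectively $e^{\alpha_0t}t^{-1}N^{-1/2}\to0$, the last summand stays bounded. This is the asserted inequality, and in particular $\E[|Z^N_{k,i}(t)/m_t^k-1|]\to0$, so $Z^N_{k,i}(t)/m_t^k\to1$ in probability.

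For part~2, the same estimates give $\E\big|(Z^N_{k,i}(t)-\bar Z^N_{k,i}(t))/\sqrt{m_t^k}\big|\le C\,\mathrm{err}(t,N)/\sqrt{m_t^k}\to0$ along the prescribed joint limit, so the vector of coupling errors tends to $0$ in probability. By Slutsky's lemma it then suffices to show that, for $N$ large enough that $N_k\ge\ell_k$, the vector $\big((\bar Z^N_{k,i}(t)-m_t^k)/\sqrt{m_t^k}\big)_{1\le i\le\ell_k,\,1\le k\le n}$ converges in law to $\mathcal N(0,I_{\ell_1+\dots+\ell_n})$. Since $\liminf_{t\to\infty}m_t^k/t\ge\alpha_k>0$ forces $m_t^k\to\infty$, each coordinate is a centered Poisson variable of parameter tending to infinity, and its characteristic function $\exp\!\big(m_t^k(e^{i\theta/\sqrt{m_t^k}}-1)-i\theta\sqrt{m_t^k}\big)$ converges to $e^{-\theta^2/2}$; the independence across $(k,i)$ inherited from the $N^N_{k,i}$ upgrades this to the announced multivariate Gaussian limit. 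The extra requirement $\lim_N N_k/N>0$ is used only to keep $\sum_k N_k^{-1/2}$ of order $N^{-1/2}$, so that the bounds \eqref{eq:rem9first}--\eqref{eq:rem9} hold with the stated powers of $N$.

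I expect the only genuinely delicate point to be the bookkeeping of the two-parameter limit: one has to guarantee that the coupling error $\mathrm{err}(t,N)$ is negligible against $\sqrt{m_t^k}$, which is precisely what the constraints $t/N\to0$ and $e^{\alpha_0t}t^{-1}N^{-1/2}\to0$ encode once combined with the a priori lower bound $m_t^k\ge\alpha_k t$ and with the upper bounds on $m_t^k$ from Propositions~\ref{cor:subc}--\ref{cor:superc}. All the analytically heavy material — the Galton–Watson comparison and matrix-convolution/renewal arguments behind the growth control of $m_t^k$ and behind the moment estimates \eqref{eq:rem9first}--\eqref{eq:rem9} — has been isolated into those propositions, so that the proof of Theorem~\ref{theo:2} itself reduces to the deterministic-intensity (hence Poisson) identification of $\bar Z^N_{k,i}(t)$, the classical central limit theorem for Poisson variables with large parameter, and Slutsky's lemma.
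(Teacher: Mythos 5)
Your proof is correct and follows the same skeleton as the paper's: the decomposition $Z^N_{k,i}(t)-m_t^k=\bigl(Z^N_{k,i}(t)-\bar Z^N_{k,i}(t)\bigr)+\bigl(\bar Z^N_{k,i}(t)-m_t^k\bigr)$, with the coupling term controlled by \eqref{eq:rem9first}/\eqref{eq:rem9} from Propositions \ref{cor:subc} and \ref{cor:superc}, and the second term controlled by the variance identity $\E\bigl[(\bar Z^N_{k,i}(t)-m_t^k)^2\bigr]=m_t^k$; your bookkeeping of the joint limit in $(t,N)$ under the stated constraints coincides with the paper's estimate \eqref{eq:tobecited} and its supercritical analogue \eqref{eq:1}. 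The one place where you genuinely deviate is the Gaussian limit in part 2: the paper treats $\bar Z^N_{k,i}(t)-m_t^k$ as a compensated-PRM martingale $M^N_{k,i}(t)$ and delegates its asymptotic normality to the proof of Theorem 10 of \cite{dfh}, whereas you observe that the integrand in \eqref{eq:versionlimitprocess} is deterministic, so that $\bar Z^N_{k,i}(t)$ is \emph{exactly} Poisson with parameter $m_t^k$ (and independent over $(k,i)$ since the driving PRMs are), and you conclude by the elementary characteristic-function CLT for Poisson variables with diverging parameter plus Slutsky. This is a self-contained and arguably more transparent way to finish: it makes explicit the structural fact (deterministic intensity of the coupled limit processes) that underlies the martingale argument being cited, and it removes the dependence on the external reference at no cost in generality.
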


The proof of Theorem \ref{theo:2} and Propositions \ref{cor:subc} and \ref{cor:superc} can be found in Appendix.

\begin{rem}
Since the rate functions are nonlinear, we only obtain the central limit theorem  in the regime $ t N^{-1} \to 0 $ (subcritical case) or $ N^{-1/2} t^{-1} e^{\alpha_0 t }  \to 0 $ (supercritical case),  contrarily to \cite{dfh} who do not have any restriction in the subcricital case and who only impose $ N^{-1} e^{\alpha_0 t } \to 0 $ in the supercritical case. This is due to the fact on the one hand that we deal with the nonlinear case and on the other hand that we do not dispose of general asymptotical equivalents of $ t \mapsto m_t^k .$ 

\end{rem}

\section{Oscillations  and associated dynamical systems in monotone cyclic feedback systems}\label{sec:3}
The aim of this section is to study the limit system \eqref{eq:limit} and \eqref{eq:limitmean} and describe situations in which oscillations will occur. Throughout this section we suppose that the information is transported through the system  according to a {\it monotone cyclic feedback system} \cite{malletparet-smith}. That it is monotone means that the rate functions $f_k$ are non-decreasing, and a cyclic feedback system means that each population $k$ is only influenced by population $k+1, $ where we identify $n+1$ with $1.$ Thus, $h_{kl} \equiv 0 $ for all $ k, l $ such that $ l \neq k+1 .$ 
The memory kernels $ h_{k k+1}$ describe how population $k+1$ influences population $k.$ 

From now on, we identify $m^{n+1} $ with $m^1$ and 
introduce the memory variables
\begin{equation}\label{eq:memory}
 x^k_t  = \int_0^t h_{kk+1 }  (t-s)  d m_s^{k+1} , \mbox{ for } \, 1 \le k \le  n  .
 \end{equation}
We have $ m_t^k = \int_0^t f_k ( x^{k}_s) ds .$ For specific choices of kernel functions the above system of memory variables can be developed into a system of differential equations without delay by increasing the dimension of the system, see \eqref{eq:cascade} below. We call this a {\em Markovian cascade} of successive memory terms. 
It is obtained by using Erlang kernels, given by 
\begin{equation}\label{eq:erlang}
 h_{k k+1 } (s) =  c_k e^{ - \nu_k s } \frac{ s^{\eta_k}}{\eta_k!}  , \mbox{ for }  k < n ,  \mbox{ and }  h_{n 1} ( s) = c_n e^{-\nu_n s} \frac{ s^{\eta_n}}{\eta_n !}   ,
\end{equation}
where $\eta_k \in \N_0 , c_k \in \{-1,1\} $ and $ \nu_k > 0$ are fixed constants. Here, $\eta_k+1$ is the order of the delay, i.e., the number of differential equations needed for population $k$ to obtain a system without delay terms. The delay of the influence of population $k+1$ on population $k$ is distributed and taking its maximum absolute value at $\eta_k/\nu_k$ time units back in time, and the mean is $(\eta_k+1)/\nu_k$ (if normalizing to a probability density). The higher the order of the delay, the more concentrated is the delay around its mean value, and in the limit of $\eta_k \rightarrow \infty$ while keeping $(\eta_k+1)/\nu_k$ fixed, the delay converges to a discrete delay. The sign of $c_k$ indicates if the influence is inhibitory or excitatory.

Observing that $h_{kk+1} ' (t) = - \nu_{k}h_{kk+1}  (t) +  c_{k} \, \frac{ t^{\eta_{k}- 1}}{(\eta_{k}-1)! }  e^{- \nu_{k}  t}$ leads to the following auxiliary variables 
$$ x_t^{k,l} = \int_{0}^t c_k e^{ -\nu_k  (t-s)} \frac{ (t-s)^{\eta_k-l}}{(\eta_k-l)!} d m_s^{ k+1} , 1 \le k \le n , 0 \le l\le \eta_k,$$
where we identify $x^k =x^{k,0}.$ Then we can rewrite 
\begin{equation}\label{eq:tobeusedlater}
 \frac{d x_t^{k,l}}{dt} =  - \nu_k x_t^{k,l} + x_t^{k,l+1} , l < \eta_k .
\end{equation}
Iterating this argument, the following system of coupled differential equations is obtained. For all $1 \le k \le n,$ and where as usual $n+1$ is identified with $1$, 
\begin{eqnarray}\label{eq:cascade}
\frac{d x_t^{k,l}}{dt} &=& - \nu_k x^{k,l}_t +  x^{k, l+1}_t , \, \, 0 \le l  < \eta_k  , \nonumber \\
\frac{d x_t^{k, \eta_k}}{dt} &=& - \nu_k x^{k, \eta_k}_t  + c_k  f_{k+1} ( x^{k+1, 0}_t) ,
\end{eqnarray}
with initial conditions $x^{k,l}_0=0$. System \eqref{eq:cascade} exhibits the structure of a monotone cyclic feedback system as considered e.g.\ in \cite{malletparet-smith} or as (33) and (34) in \cite{michel}. If $\prod_{k=1}^n c_k > 0, $ then the system \eqref{eq:cascade} is of total positive feedback, otherwise it is of negative feedback. We obtain the following simple first result.

\begin{prop}\label{prop:xstar}
Suppose that $ \prod_{k=1}^n c_k  < 0 $ and that $f_1, \ldots , f_n $ are non-decreasing. Then \eqref{eq:cascade} admits a unique equilibrium $x^* .$ 
\end{prop}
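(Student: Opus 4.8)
The plan is to reduce the search for an equilibrium of the $\bigl(\sum_k(\eta_k+1)\bigr)$-dimensional system \eqref{eq:cascade} to a scalar fixed point problem, and then to use the negative feedback hypothesis $\prod_{k}c_k<0$ to show that the corresponding return map is order-reversing, hence has exactly one fixed point. It is precisely here that the sign condition enters: in the positive feedback case the return map would be non-decreasing and uniqueness could fail.

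First I would write down the stationarity equations. Setting every left-hand side of \eqref{eq:cascade} to zero, the first family of equations gives $x^{k,l+1}=\nu_k x^{k,l}$ for $0\le l<\eta_k$, hence $x^{k,l}=\nu_k^{\,l}\,x^{k,0}$ for $0\le l\le\eta_k$, while the last equation gives $\nu_k x^{k,\eta_k}=c_k f_{k+1}(x^{k+1,0})$, i.e.
$$x^{k,0}=\frac{c_k}{\nu_k^{\eta_k+1}}\,f_{k+1}(x^{k+1,0}),\qquad 1\le k\le n,$$
with the cyclic convention $x^{n+1,0}=x^{1,0}$ and $f_{n+1}=f_1$. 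Thus an equilibrium is entirely determined by the scalar $y:=x^{1,0}$: running these relations backwards around the cycle expresses $x^{n,0},x^{n-1,0},\dots$ in terms of $y$, and $y$ must solve $y=\Phi(y)$, where $\Phi:=g_1\circ g_2\circ\cdots\circ g_n$ with $g_k(z):=\bigl(c_k/\nu_k^{\eta_k+1}\bigr)f_{k+1}(z)$. Conversely, any solution $y^*$ of $y=\Phi(y)$ produces, via $x^{k,0}$ read off from the cyclic relations and $x^{k,l}:=\nu_k^{\,l}x^{k,0}$, a genuine equilibrium of \eqref{eq:cascade}. Hence existence and uniqueness of $x^*$ is equivalent to existence and uniqueness of a fixed point of $\Phi$.

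Next I would analyse $\Phi$. Each $f_k$ is non-decreasing and $\nu_k>0$, so $g_k$ is non-decreasing if $c_k=1$ and non-increasing if $c_k=-1$; in both cases $g_k$ is continuous since $f_k\in C^1(\R;\R_+)$ by Assumption \ref{ass:1}(i). A composition of (weakly) monotone maps is order-reversing if and only if an odd number of the factors are order-reversing, and here this number equals $\#\{k:c_k=-1\}$, which is odd precisely because $\prod_{k=1}^n c_k<0$. Therefore $\Phi:\R\to\R$ is continuous and non-increasing. Now the map $\psi(y):=\Phi(y)-y$ is continuous and, being the sum of a non-increasing function and the strictly decreasing map $y\mapsto-y$, is strictly decreasing, so it has at most one zero. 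Since $\Phi$ is non-increasing we also have $\psi(y)\ge\Phi(0)-y\to+\infty$ as $y\to-\infty$ and $\psi(y)\le\Phi(0)-y\to-\infty$ as $y\to+\infty$, so by the intermediate value theorem $\psi$ has exactly one zero $y^*$. Unwinding the reduction, the point $x^*$ with components $x^{*\,k,l}=\nu_k^{\,l}y_k^*$, where the $y_k^*$ are defined from $y^*$ by the cyclic relations, is the unique equilibrium of \eqref{eq:cascade}.

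There is no serious obstacle in this argument; the only points requiring a little care are the bookkeeping in the cyclic reduction to the scalar equation $y=\Phi(y)$ and the parity argument that converts the negative-feedback condition $\prod_k c_k<0$ into order-reversal of $\Phi$. Everything else is elementary continuity and monotonicity.
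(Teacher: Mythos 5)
Your proof is correct and follows essentially the same route as the paper's: reduce the stationarity equations to a one-dimensional fixed-point problem for a single coordinate, observe that the composed return map is decreasing precisely because $\prod_k c_k<0$, deduce existence and uniqueness of the scalar fixed point, and then recover the remaining coordinates from the linear relations $x^{k,l}=\nu_k^l x^{k,0}$. The only differences are cosmetic (you anchor the reduction at $x^{1,0}$ rather than $(x^*)^{n,\eta_n}$, and you spell out the parity and intermediate-value arguments that the paper leaves implicit).
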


\begin{proof}
Any equilibrium $x^* $ must satisfy 
$$(x^*)^{n, \eta_n}  = \frac{c_n}{\nu_n} f_1 \circ \frac{c_1}{\nu_1^{\eta_1 +1}}  f_2 \circ \ldots \circ \frac{c_{n-1}}{\nu_{n-1}^{\eta_{n-1} +1}}  f_n ( \frac{1}{\nu_n^{\eta_n} }(x^*)^{n,\eta_n} ) .$$ 
Since $\frac{c_n}{\nu_n} f_1 \circ \frac{c_1}{\nu_1^{\eta_1 +1}}  f_2 \circ \ldots \circ \frac{c_{n-1}}{\nu_{n-1}^{\eta_{n-1} +1}}  f_n ( \frac{1}{\nu_n^{\eta_n} } \; \cdot  )   $ is decreasing, there exists exactly one solution $(x^*)^{n,\eta_n} $ in $\R$.  Once $ (x^*)^{n, \eta_n} $ is fixed, we obviously have $ (x^*)^{n, \eta_n - 1  } = \frac{1}{\nu_n} (x^*)^{n, \eta_n}, $ and the values of the other coordinates of $x^* $ follow in a similar way.
\end{proof} 

In special cases system \eqref{eq:cascade} is necessarily attracted to a non-equilibrium periodic orbit. Let $ \kappa := n + \sum_{ k=1}^n \eta_k$ be the dimension of \eqref{eq:cascade}.  

We introduce the following assumption. 

\begin{ass}\label{ass:4}
Suppose that $f_k, 1 \le k \le n , $ are non-decreasing bounded analytic 
functions. Moreover, suppose that  $\rho := \prod_{k=1}^n c_k f_k' ((x^*)^{k,
0}) $ satisfies that $ \rho < 0.$
\end{ass}

Notice that under Assumption \ref{ass:4}, the conditions of Proposition \ref{prop:xstar} are satisfied, and thus \eqref{eq:cascade} admits a unique equilibrium $x^*  $ under Assumption \ref{ass:4}.

The following theorem is based on Theorem 4.3 of \cite{malletparet-smith} and generalizes the result obtained by Theorem 6.3 in \cite{michel}.

\begin{theo}\label{theo:orbit}
Grant Assumption \ref{ass:4}. Consider all solutions $\lambda
$ of

\begin{equation}\label{eq:racinesunite}
(\nu_1 + \lambda)^{\eta_1 +1} \cdot \ldots \cdot (\nu_n + \lambda)^{\eta_n
+1} = \rho
\end{equation}

and suppose that there exist at least two solutions $\lambda$ of
\eqref{eq:racinesunite} such that

\begin{equation}\label{eq:unstable}
\mbox{Re } ( \lambda ) > 0.
\end{equation}

(i) $x^* $ is linearly unstable, and the system \eqref{eq:cascade}
possesses at least one, but no more than a finite number of periodic
orbits. At least one of them is orbitally asymptotically stable. \\

(ii) Moreover, if $ \kappa = 3 ,$ then there exists a globally
attracting invariant surface $\Sigma $ such that $x^* $ is a repellor
for the flow in $\Sigma .$ Every solution of \eqref{eq:cascade} will
be attracted to a non constant periodic orbit.

\end{theo}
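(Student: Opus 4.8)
Consider Theorem \ref{theo:orbit}. The plan is to recognize the cascade \eqref{eq:cascade} as a \emph{monotone cyclic feedback system} (MCFS) in the sense of \cite{malletparet-smith} and then to quote their Poincar\'e--Bendixson theory. First I would arrange the $\kappa = n + \sum_k \eta_k$ scalar equations into a single cyclic chain $x^{1,0}\to x^{1,1}\to\cdots\to x^{1,\eta_1}\to x^{2,0}\to\cdots\to x^{n,\eta_n}\to x^{1,0}$ and check that the right-hand side of each equation depends monotonically, with a fixed sign, only on its own coordinate and on one neighbor in the chain: the intra-block couplings $x^{k,l+1}\mapsto\dot x^{k,l}$ have sign $+1$, while $x^{k+1,0}\mapsto\dot x^{k,\eta_k}$ has the sign of $c_k$ because $f_{k+1}$ is non-decreasing (Assumption \ref{ass:4}). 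Hence the global feedback sign equals $\mathrm{sgn}\big(\prod_k c_k\big)$, and since $\rho<0$ together with $f_k'\ge 0$ forces all $f_k'((x^*)^{k,0})>0$ and $\prod_k c_k<0$, the system is of negative feedback. Because every $f_k$ is bounded, the triangular linear structure of \eqref{eq:cascade} --- solve successively for $x^{k,\eta_k},x^{k,\eta_k-1},\dots,x^{k,0}$, each with bounded forcing --- shows that all solutions are globally bounded, so the flow is dissipative with a compact absorbing set $K$.

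Second, I would compute the spectrum of the linearization $DF(x^*)$. Substituting $x^{k,l}-(x^*)^{k,l}=v^{k,l}e^{\lambda t}$ into the linearized equations gives $v^{k,\eta_k}=(\nu_k+\lambda)^{\eta_k}v^{k,0}$ from the intra-block relations and $(\nu_k+\lambda)^{\eta_k+1}v^{k,0}=c_k f_{k+1}'((x^*)^{k+1,0})\,v^{k+1,0}$ from the coupling; multiplying these relations around the cycle and cancelling $\prod_k v^{k,0}$ yields exactly \eqref{eq:racinesunite}. Thus the eigenvalues of $DF(x^*)$ are precisely the roots of \eqref{eq:racinesunite}, and hypothesis \eqref{eq:unstable} that at least two of them satisfy $\mathrm{Re}\,\lambda>0$ makes the unstable subspace at least two-dimensional; in particular $x^*$ is linearly unstable, which is the first assertion of (i).

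Third, for the remaining parts of (i) I would invoke the Poincar\'e--Bendixson theorem for monotone cyclic feedback systems (Theorem 4.3 of \cite{malletparet-smith}; see also Theorem 6.3 of \cite{michel}), whose hypotheses are now all in place: negative feedback, dissipativity, a unique equilibrium $x^*$ (Proposition \ref{prop:xstar}, whose conditions hold under Assumption \ref{ass:4}), and a linearly unstable $x^*$ whose unstable subspace is at least two-dimensional --- the latter being exactly what the ejective-fixed-point/annulus argument behind their theorem requires. The integer-valued discrete Lyapunov functional of \cite{malletparet-smith} restricts the possible $\omega$-limit sets, and, combined with instability of $x^*$, uniqueness of the equilibrium and dissipativity, it excludes convergence to $x^*$ and to homoclinic/heteroclinic cycles, so a non-constant periodic orbit exists. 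Analyticity of the $f_k$ forces periodic orbits to be isolated, hence only finitely many lie in $K$; they are totally ordered by the discrete Lyapunov functional and no trajectory can cross between them, so the ``outermost'' one attracts a neighborhood --- any trajectory close to it is trapped and, having no equilibrium in its $\omega$-limit set, converges to a periodic orbit by the P--B theorem --- yielding an orbitally asymptotically stable periodic orbit.

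Finally, for (ii) with $\kappa=3$ I would use the three-dimensional case of the same theory: a negative-feedback MCFS in $\R^3$ possesses a globally attracting invariant surface $\Sigma$ carrying all the recurrent dynamics, on which the flow is essentially planar. Since $x^*$ is the only equilibrium and is linearly unstable with a two-dimensional unstable subspace, it is a repellor for the flow restricted to $\Sigma$, and the classical Poincar\'e--Bendixson theorem on $\Sigma$, together with dissipativity, forces the $\omega$-limit set of every solution to be a non-constant periodic orbit. The main obstacle is not the characteristic-equation computation but the careful matching with the framework of \cite{malletparet-smith}: verifying the \emph{global} monotonicity of the $f_k$, pinning down the labeling of the unstable eigenvalues by the discrete Lyapunov functional (which is what makes ``at least two unstable roots'' the correct threshold), and extracting the orbitally stable orbit in (i) and the attracting surface $\Sigma$ in (ii) from their general structure theorems.
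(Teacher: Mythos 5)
Your proof is correct and follows essentially the same route as the paper: the characteristic polynomial of $DF(x^*)$ reduces to \eqref{eq:racinesunite} giving linear instability, part (i) is Theorem 4.3 of \cite{malletparet-smith} (the paper additionally records that $\rho<0$ gives $\det(-DF(x^*))>0$, their condition (4.5)), and part (ii) is the attracting invariant surface plus Poincar\'e--Bendixson. The only place you are vaguer than the paper is the origin of $\Sigma$ in (ii): the paper obtains it concretely by showing $-DF(x^*)$ is (after a sign change of one variable) a positive irreducible matrix, applying Perron--Frobenius, and invoking Theorem 1.7 of Hirsch (1988) for three-dimensional monotone systems, rather than quoting the MCFS theory wholesale.
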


\begin{proof}
Since all functions $f_k$ are bounded, the system \eqref{eq:cascade}
possesses a compact invariant set $K.$ Rewriting \eqref{eq:cascade} as
$ \dot x = F (x) , $ where $x = (x^{1, 0}, \ldots , x^{n, \eta_n})^T$, the characteristic polynomial $ P ( \lambda ) $ of $ D F (
x^*)$ is given by

$$ P ( \lambda ) =\prod_{k=1}^n (-\nu_k - \lambda)^{\eta_k +1} -
(-1)^{\kappa } \rho = (-1)^\kappa [\prod_{k=1}^n (\nu_k +
\lambda)^{\eta_k +1} - \rho ]. $$

By assumption, there exist at least two eigenvalues having strictly
positive real part. Therefore $x^* $ is unstable.
Moreover, since $\rho < 0 ,$
$$ det ( - DF ( x^*)) > 0 $$
which is condition (4.5) of 
\cite{malletparet-smith}. Then, the last assertion of item (i) follows
from Theorem 4.3 of \cite{malletparet-smith}.

To prove part (ii), we follow the proof of Theorem 6.3 in \cite{michel}. First, notice that $
D F ( x^*) $ is given by the matrix

$$ \left( \begin{array}{lll}
-d & a& 0\\
0&-e & b\\
c & 0 & -f
\end{array}
\right) ,$$

where $d,e,f > 0 $ and $\rho=abc < 0.$ Hence, either all $a,b,c$ are
negative or only one of them, say $c,$ is negative. In the first case,
$- D F ( x^*) $ is a positive irreducible matrix, in the second case,
the change of variables $ y_1=x_1, y_2=- x_2, y_3= x_3 $ leeds to a negative
irreducible matrix. We therefore suppose without loss of generality that we are in the first
case. Then the Perron-Frobenius theorem implies that $ D F ( x^*) $
possesses a single largest eigenvalue which is strictly negative, and
the eigenvector associated to it has all its components of the same
sign. Moreover, the other two eigenvectors associated to the conjugate
complex eigenvalues having positive real part do not have all
components of the same sign.

By Theorem 1.7 of Hirsch (1988) \cite{hirsch88a}, there exists a
globally attracting invariant surface $\Sigma $ such that every
trajectory within the invariant set $K$ is eventually attracted to
$\Sigma.$ By \cite{michel} the equilibrium $x^*$ is a repellor for the flow in $K.$
Hence, the Poincar\'e-Bendixson theorem implies that each such
trajectory will eventually converge to a non constant periodic orbit.

\end{proof}

\begin{rem}
If $ \nu_1 = \ldots = \nu_n = \nu $, 
then for $\kappa \geq 3$ the following condition 
\begin{equation}\label{eq:unstable2}
|\rho | > \frac{\nu^\kappa}{ \left( 
\cos ( \frac{\pi }{\kappa })\right)^{ \kappa }} 
\end{equation}
implies \eqref{eq:unstable}. Indeed, the different
eigenvalues for $1 \le j \le \kappa$ are given by
$$ \lambda_j = - \nu - |\rho|^{\frac{1}{\kappa}} e^{i \frac{2 j \pi}{\kappa}
} \, (\kappa \mbox{ odd}) \quad ; \quad \lambda_j = - \nu + |\rho|^{\frac{1}{\kappa}} e^{i \frac{(2 j-1) \pi}{\kappa}
} \, (\kappa \mbox{ even}) .$$
If $\kappa$ is odd, then there is exactly one real root, which is strictly negative, $ \lambda_{\kappa} = - \nu - |\rho|^{\frac{1}{\kappa}}$. The rest are complex conjugate pairs with real part $- \nu - |\rho|^{\frac{1}{\kappa}} \cos(\frac{ 2 j \pi}{\kappa})$. The maximal value is $- \nu + |\rho|^{\frac{1}{\kappa}} \cos(\frac{\pi}{\kappa})$ for $j=(\kappa \pm 1)/2$, such that \eqref{eq:unstable2} implies \eqref{eq:unstable}. If $\kappa$ is even, then all roots are complex conjugate pairs with real part $- \nu + |\rho|^{\frac{1}{\kappa}} \cos(\frac{ (2 j -1)\pi}{\kappa})$. The maximal value is as before, now for $j=1,\kappa$, such that again \eqref{eq:unstable2} implies \eqref{eq:unstable}.

\end{rem}

\begin{rem}[Phase transition due to increasing memory]

In some cases, increasing the order of the memory, i.e.\ the value of some of the exponents $\eta_k$ in \eqref{eq:erlang} or equivalently the value of $\kappa$, 
can lead to a phase transition within the system \eqref{eq:cascade}. At the phase transition point, a
system which was stable can become unstable, and in certain cases, increasing the order even more might stabilize the system again. As an example, consider a family of $n$ populations of neurons, where $n>1$ is fixed, and such that $\nu_k =\nu $ for all $ 1 \le k \le n$. If  $\kappa=2$, the fixed point is stable since eigenvalues are $\lambda_j = -\nu \pm i \sqrt{|\rho | }$, and only damped oscillations occur. We will assume $\kappa \geq 3$. 

First note that $\rho$ is bounded due to the Lipschitz condition on the rate functions $f_k$. The right hand side of \eqref{eq:unstable2} goes to infinity for $\nu \to \infty$ for all values of $\kappa$, and thus, if $\nu$ is large, the system will always be stable and not exhibit oscillations. For any fixed value of $\nu > 1$, it also goes to infinity for $\kappa \to \infty$, such that a possible unstable system becomes stable for increasing $\kappa$. This implies that for a discrete delay of any value the system will never exhibit oscillations, since a discrete delay is obtained for $\eta_k \rightarrow \infty$, keeping $\eta_k/\nu_k$ constant.

Now assume that $\nu=1$. Then increasing $\kappa$ does not change the
coordinates $(x^*)^{k,0}$ of the equilibrium state $x^*$, so $\rho$ does not change. The right hand side of \eqref{eq:unstable2} decreases towards one, so if $-8<\rho < -1$, then there exists $\kappa_0 > 3 $ minimal such
that for all $ \kappa \geq \kappa_0, $ \eqref{eq:unstable2} is fulfilled, 
but $|\rho | \le  \left( \nu/
\cos ( \frac{\pi }{\kappa })\right)^{ \kappa } $ for $\kappa < \kappa_0.$
Then all models corresponding to $ \kappa < \kappa_0$ have $x^* $ as
attracting equilibrium point, but for $ \kappa \geq \kappa_0,$ the
equilibrium $x^* $ becomes unstable. 

\end{rem}

As a corollary of the above Theorem, we show that one of the conditions needed to state the central limit theorem in Theorem \ref{theo:2} is satisfied.

\begin{cor}
Suppose that $n=2$ and that the conditions of Theorem \ref{theo:orbit} hold true. Then there exist $ \alpha_1, \alpha_2> 0$ such that 
$$ \lim \inf_{t \to \infty } \frac{m_t^k }{t}\geq \alpha_k , \; k=1, 2 .$$
\end{cor}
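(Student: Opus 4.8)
The plan is to show that under the hypotheses of Theorem \ref{theo:orbit} the trajectory $(x^1_t, x^2_t)$ of the limit system stays, for all large $t$, in a region bounded away from the places where the rate functions vanish; then $f_k(x^k_t)$ is bounded below by a positive constant eventually, and since $m_t^k = \int_0^t f_k(x^k_s)\,ds$, this forces $\liminf_{t\to\infty} m_t^k/t > 0$. First I would recall from Theorem \ref{theo:orbit}(ii) that, since $n=2$, the dimension $\kappa = 2 + \eta_1 + \eta_2 \geq 3$ and the system \eqref{eq:cascade} has a globally attracting invariant surface $\Sigma$ on which $x^*$ is a repellor, so every trajectory converges to a non-constant periodic orbit $\Gamma$. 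In particular the $\omega$-limit set of $(x^k_t)$ is a compact set $\Gamma$ not containing any equilibrium.

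Next I would use the fact that $\Gamma$ is a compact subset of $\R^\kappa$ disjoint from the set where the first two coordinates would make $f_1$ or $f_2$ degenerate. The key point: along $\Gamma$ the value of $f_k(x^{k,0}_t)$ is a continuous periodic function of $t$; if it were identically zero on $\Gamma$ then the corresponding coordinate equations in \eqref{eq:cascade} would drive $x^{k,l}_t \to 0$, contradicting that $\Gamma$ is non-constant and periodic (the only invariant set on which all the $x^{k,l}$ coordinates of one block vanish would have to sit in the stable manifold structure forcing convergence to $x^*$). Hence $\min_{\Gamma} f_k \circ x^{k,0} =: 2\alpha_k' > 0$ for $k=1,2$. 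Because $(x^k_t) \to \Gamma$ and $f_k$ is continuous, there is $t_0$ such that $f_k(x^k_t) \geq \alpha_k'$ for all $t \geq t_0$. Then for $t \geq t_0$,
$$ \frac{m_t^k}{t} = \frac{1}{t}\int_0^t f_k(x^k_s)\,ds \geq \frac{t - t_0}{t}\,\alpha_k' \longrightarrow \alpha_k' , $$
so $\liminf_{t\to\infty} m_t^k/t \geq \alpha_k' =: \alpha_k > 0$, which is the claim.

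The main obstacle is the middle step: ruling out that $f_k$ vanishes along the entire limit cycle. One has to argue that if $f_{k+1}(x^{k+1,0}_t) \equiv 0$ on $\Gamma$, then the last equation of the $k$-block in \eqref{eq:cascade} reads $\dot x^{k,\eta_k}_t = -\nu_k x^{k,\eta_k}_t$, forcing $x^{k,\eta_k}_t \to 0$ on the invariant set $\Gamma$, hence $x^{k,\eta_k}_t \equiv 0$, and then successively $x^{k,l}_t \equiv 0$ for all $l$ down to $x^{k,0}_t \equiv 0$; this then propagates through the cyclic structure and pins the trajectory at a point, contradicting that $\Gamma$ is non-constant. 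A clean way to phrase this is: the subset of $\Gamma$ where $f_{k+1}\circ x^{k+1,0}$ vanishes is closed with empty interior unless it is all of $\Gamma$, and the latter is incompatible with $\Gamma$ being a non-equilibrium periodic orbit. Once this is in place, the remaining steps are elementary. Alternatively, if one prefers to avoid case analysis, it suffices to invoke that $x^*$ has at least one coordinate $(x^*)^{k,0}$ at which $f_k$ is strictly positive (else $x^* = 0$ and $\rho = 0$, contradicting $\rho<0$ in Assumption \ref{ass:4}), together with a minimum-principle argument on the periodic orbit.
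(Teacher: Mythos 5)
Your overall strategy is the same as the paper's: every trajectory of \eqref{eq:cascade} is attracted to a non-constant periodic orbit $\Gamma$, and one then bounds $f_k(x^{k,0}_s)$ from below for large $s$, so that $m_t^k=\int_0^t f_k(x^{k,0}_s)\,ds$ grows linearly. The difference lies in how you obtain the lower bound, and this is where there is a genuine gap. You correctly rule out that $f_{k+1}\circ x^{k+1,0}$ vanishes \emph{identically} on $\Gamma$ (the cascade argument forcing all coordinates to be constant is sound), but you then conclude ``Hence $\min_\Gamma f_k\circ x^{k,0}>0$.'' That does not follow: a continuous non-negative function can vanish on a proper closed subset of $\Gamma$ without vanishing identically, in which case the minimum is $0$ and your pointwise bound $f_k(x^k_t)\ge\alpha_k'$ for all $t\ge t_0$ has no justification. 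Your fallback --- positivity of $f_k$ at the equilibrium plus an unspecified ``minimum principle'' --- does not close this either, since positivity at the single point $(x^*)^{k,0}$ says nothing about the values of $x^{k,0}$ visited along $\Gamma$ (which stays away from the repellor $x^*$).

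The clean repair, and what the paper's one-line proof actually uses, is that under Assumption \ref{ass:4} each $f_k$ is strictly positive on all of $\R$: if $f_k(y_0)=0$ for some $y_0$, then since $f_k\ge 0$ is non-decreasing we get $f_k\equiv 0$ on $(-\infty,y_0]$, hence $f_k\equiv 0$ on $\R$ by analyticity, hence $f_k'\equiv 0$ and $\rho=0$, contradicting $\rho<0$. With strict positivity, continuity gives a positive minimum of $f_k\circ x^{k,0}$ on a compact neighborhood of $\Gamma$, and your concluding computation goes through verbatim. (Alternatively one could avoid strict positivity by averaging over one period --- the mean of $f_k\circ x^{k,0}$ over $\Gamma$ is positive once identical vanishing is excluded --- but then one must show that time averages along the trajectory converge to time averages over $\Gamma$, a compactness/continuity step your write-up does not contain.)
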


\begin{proof}
Since $c_1 c_2 < 0, $ any solution to \eqref{eq:cascade} is eventually attracted to a non constant periodic orbit. Since $m_t^k = \int_0^t f_k ( x_s^{k, 0} ) ds $ and since $f_k$ is non-decreasing and strictly positive, it follows that $\lim \inf_{t \to \infty } \frac{1}{t} m_t^k > 0.$  

\end{proof}

\section{Study of an approximating diffusion process and simulation study}\label{sec:diffusion}
In this section we work with the cyclic feedback system of the last section. The aim is to study to which extent the behavior of the limit system is also observed within the finite size system $ Z^N_{k, i } .$

\subsection{An associated system of piecewise deterministic Markov processes}\label{sec:erlang}
Introducing the family of adapted c\`adl\`ag processes (recall \eqref{eq:memory})
\begin{equation}\label{eq:intensity2}
X^N_{k} (t) :=\frac{1}{N_{k+1}} \sum_{ j = 1 }^{N_{k+1}}  \int_{]0, t]} h_{k k+1 } (  t- s ) d Z^N_{k+1, j } ( s)  = \int_{]0, t]} h_{kk+1 } (t-s) d \bar Z^N_{k+1} ( s) ,
\end{equation}
where $ \bar Z^N_{k+1} ( s) = \frac{1}{N_{k+1}} \sum_{j=1}^{N_{k+1}} Z^N_{k+1, j } ( s)  $ and recalling \eqref{eq:intensity},  it is clear that the dynamics of the system is entirely determined by the dynamics of the processes $ X^N_{k } ( t- ) , t \geq 0 .$\footnote{We have to take the left-continuous version, since intensities are predictable processes.} In some sense, $ X^N_{k } $ describes the accumulated memory belonging to the directed edge pointing from population $k+1$ to population $k.$ Without assuming the memory kernels to be Erlang kernels, the system $(X^N_{k } , 1 \le k \le n  ) $ is not Markovian. For general memory kernels, Hawkes processes are truly infinite memory processes. 

When the kernels are Erlang, given by \eqref{eq:erlang},
taking formal derivatives in \eqref{eq:intensity2} with respect to time $t$ and introducing for any $ k $ and $ 0  \le l \le \eta_{k } $  
\begin{equation}
 X^N_{k, l} ( t) :=  c_{k} \, \int_{]0, t]}   \frac{ (t-s)^{\eta_k- l}}{(\eta_k-l)! }  e^{- \nu_{k} ( t-s)} d \bar Z^N_{k+1} ( s)  ,
\end{equation} 
we obtain the following system of stochastic differential equations which is a stochastic version of \eqref{eq:cascade}.
\begin{equation}\label{eq:cascadepdmp}
\left\{ 
\begin{array}{lcl}
d X^N_{k, l } (t) &=& [ -  \nu_{k} X^N_{k, l  } ( t) + X^N_{k, l+1} (t) ] dt , \; 0 \le l < \eta_k ,  \\
d X^N_{k, \eta_k} (t) &=& -  \nu_{k} X^N_{k,  \eta_k} (t) dt + c_{k} d \bar Z^N_{k+1} (t) .
\end{array}
\right. 
\end{equation}
Here, $X^N_{k }$ is identified with $X^N_{k, 0 }, $ $\bar Z^N_{k} = \frac{1}{N_k} \sum_{j=1}^{N_k} Z_{k, j }^N, $ and each $Z^N_{k, j } $ jumps at rate $f_{k} (  X^N_{k, 0 } (t- ) ) .$ We call the system \eqref{eq:cascadepdmp} a {\em cascade of memory terms}. 
Thus, the dynamics of the Hawkes process $ (Z^N_{k, i } (t))_{ (k,i)\in I^N}$ is entirely determined by the piecewise deterministic Markov process (PDMP) $ ( X^N_{k, l })_{(1\le k \le n, 0 \le l \le \eta_k )}  $ of dimension $ \kappa$.

\subsection{A diffusion approximation in the large population regime}
The process $\bar Z^N_{k+1} (t)$ appearing in the last equation of \eqref{eq:cascadepdmp} jumps at a rate given by $ N_{k+1} f_{k+1}( X^N_{k+1, 0 } (t- )) ,$ having jumps of size $  \frac{1}{N_{k+1} }.$ Its variance is $ \frac{f_{k+1}( X^N_{k+1, 0 } (t- ))}{N_{k+1}}.$ 
Therefore, it is natural to consider the approximating diffusion process
\begin{equation}\label{eq:cascadeapprox}
\left\{ 
\begin{array}{lcl}
 d Y^N_{k, l } (t) &=& [ -  \nu_{k} Y^N_{k, l  } ( t) + Y^N_{k, l +1} (t) ] dt , \;  \quad \quad 0 \le l < \eta_k ,  \\
d Y^N_{k,  \eta_k} (t) &=& -  \nu_{k} Y^N_{k,  \eta_k} (t) dt + c_{k} f_{k+1} (  Y^N_{ k+1, 0} (t) ) dt  +  c_{k} \frac{\sqrt{f_{k+1} (  Y^N_{ k+1, 0} )(t)  }}{ \sqrt{ N_{k+1}}} d B_{ k+1} (t) ,
\end{array}
\right . 
\end{equation}
where the $B_{l} (t), 1 \le  l \le n ,$ are independent standard Brownian motions, approximating the jump noise of each population. Write $ A^X $ for the infinitesimal generator of the process \eqref{eq:cascadepdmp} and $A^Y$ for the corresponding generator of \eqref{eq:cascadeapprox}. Moreover, write $ P_t^X $ and $P_t^Y$ for the associated Markovian semigroups. We denote generic elements of the state space $ \R^\kappa $ of $ Y^N$ by  
$ x = ( x^1, \ldots , x^\kappa  ) .$ Finally, for a function $ g$ defined on $\R^\kappa , $ we define 
$$ \| g\|_{r, \infty } := \sum_{k=0}^r \sum_{ |\alpha | = k } \| \partial^\alpha g \|_\infty .$$
Then we obtain the following approximation result showing that $Y^N $ is a good small noise approximation of $X^N.$

\begin{theo}\label{theo:approx}
Suppose that all spiking rate functions $f_k$ belong to the space $   C^5_b $ of bounded functions having bounded derivatives up to order $5.$ Then there exists a constant $C$ depending only on $f_1, \ldots , f_n $ and the bounds on its derivatives such that for all $ \varphi \in C^4_b (\R^\kappa ; \R ) , $ 
$$ \| P_t^X \varphi - P_t^Y \varphi \|_\infty \le C t \frac{\| \varphi\|_{4, \infty}}{N^2 } .$$
\end{theo}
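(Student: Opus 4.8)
The plan is to use the standard semigroup comparison (telescoping / Trotter-Kato) identity together with bounds on the regularity of the semigroup $P_t^Y$ applied to $\varphi$, and a precise estimate of the discrepancy between the two generators $A^X$ and $A^Y$ acting on smooth functions. First I would write, for $\varphi \in C^4_b(\R^\kappa;\R)$ and $0 \le s \le t$,
\[
 P_t^X \varphi - P_t^Y \varphi = \int_0^t P_s^X \big( A^X - A^Y \big) P_{t-s}^Y \varphi \, ds ,
\]
which is justified once we know that $u(s,x) := (P_{t-s}^Y\varphi)(x)$ is sufficiently smooth in $x$ (at least $C^4$ with bounds uniform in $s \in [0,t]$) and solves the backward Kolmogorov equation $\partial_s u = - A^Y u$. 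Since $P_s^X$ is a contraction on $\|\cdot\|_\infty$, it then suffices to bound $\| (A^X - A^Y) g \|_\infty$ for $g = P_{t-s}^Y\varphi$ in terms of $\|g\|_{4,\infty}$, and to control $\sup_{s \le t}\|P_{t-s}^Y\varphi\|_{4,\infty}$.

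The generator difference is the crux of the computation. Both generators contain the same deterministic transport part $\sum_{k,l}(-\nu_k x^{k,l} + x^{k,l+1})\partial_{x^{k,l}}$, which cancels. What remains is, for each population $k$, the difference between the jump operator of $\bar Z^N_{k+1}$ (jumps of size $c_{k}/N_{k+1}$ in coordinate $x^{k,\eta_k}$ at rate $N_{k+1} f_{k+1}(x^{k+1,0})$) and its diffusive surrogate (drift $c_k f_{k+1}(x^{k+1,0})\partial_{x^{k,\eta_k}}$ plus $\tfrac12 \tfrac{f_{k+1}(x^{k+1,0})}{N_{k+1}} \partial^2_{x^{k,\eta_k}}$). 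A Taylor expansion of $g(x + \tfrac{c_k}{N_{k+1}} e_{k,\eta_k})$ to third order shows that the order-$N_{k+1}$ term matches the drift, the order-$1$ term matches the second-order diffusion term, and the leftover is
\[
 N_{k+1} f_{k+1}(x^{k+1,0}) \cdot \frac{1}{6}\Big(\frac{c_k}{N_{k+1}}\Big)^3 \partial^3_{x^{k,\eta_k}} g(\xi) ,
\]
i.e. of order $N_{k+1}^{-2}\|g\|_{3,\infty}\|f_{k+1}\|_\infty$ for each of the $n$ populations; since $N_{k+1} \ge c N$ for a fixed constant, this is $O(N^{-2})$, explaining the $N^{-2}$ in the statement and the need for $f_k \in C^5_b$ (third derivatives of $g$, hence effectively up to fourth or fifth derivatives once one tracks the semigroup regularity — see below).

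The main obstacle is the bound $\sup_{s\le t}\|P_{t-s}^Y\varphi\|_{4,\infty} \le C\|\varphi\|_{4,\infty}$, i.e. propagation of spatial regularity by the diffusion semigroup $P_t^Y$. Here I would differentiate the SDE \eqref{eq:cascadeapprox} in its initial condition, obtaining a linear system for the Jacobian $\partial Y^N_t/\partial x$ (and higher variation processes) whose coefficients involve derivatives of $f_k$ up to the needed order — this is exactly where $f_k \in C^5_b$ enters, to control up to the fourth derivative of $x \mapsto \E_x[\varphi(Y^N_t)]$. Because the drift is linear in the "transport" coordinates and the nonlinearity $f_{k+1}$ enters only through one coordinate with bounded derivatives, the variation processes satisfy Gronwall-type estimates and all moments are bounded uniformly on $[0,t]$, giving the constant $C$ depending only on the $f_k$ and their derivative bounds; the factor $t$ in the statement comes from the time integral in the telescoping identity. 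I would also note that the cascade (weak Hörmander) structure is not needed for this particular estimate — only boundedness of $f_k$ and their derivatives — so the argument is essentially a quantitative functional CLT / diffusion-approximation estimate in the spirit of Kurtz, adapted to this degenerate cascade system.
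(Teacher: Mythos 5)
Your proposal is correct and rests on the same two pillars as the paper's argument: a pointwise bound $\|(A^X-A^Y)g\|_\infty \le C N^{-2}\|g\|_{3,\infty}$ obtained by third-order Taylor expansion of the jump operator against its diffusive surrogate (this is exactly Lemma \ref{prop:approx}, whose proof the paper omits but which you spell out correctly, including the cancellation of the transport parts and the role of $N_{k+1}\ge cN$), and the propagation of $C^4_b$-regularity by $P^Y_t$ via the stochastic flow and its variation processes (Lemma \ref{lem:3}). The only genuine difference is how you telescope: you use the continuous Duhamel identity $P_t^X\varphi-P_t^Y\varphi=\int_0^t P_s^X(A^X-A^Y)P_{t-s}^Y\varphi\,ds$, whereas the paper discretizes time into steps of length $\delta$, writes the difference as a sum of one-step discrepancies $\Delta_\delta P_{t_k}^Y\varphi$, controls each via the short-time expansions $\|P^{X/Y}_\delta\varphi-\varphi-\delta A^{X/Y}\varphi\|_\infty\le C\delta^2\|\varphi\|_{4,\infty}$ (Lemma \ref{lem:2}), and then optimizes by taking $\delta=N^{-2}$. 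Your route is cleaner in that it dispenses with Lemma \ref{lem:2} and the choice of $\delta$, but it shifts the burden onto rigorously justifying that $s\mapsto P_s^XP_{t-s}^Y\varphi$ is differentiable with the stated derivative, i.e.\ that $P_{t-s}^Y\varphi$ lies in the domains of both generators with uniform bounds and solves the backward Kolmogorov equation; given Lemma \ref{lem:3} this is available here, since a $C^4_b$ function is in the domain of the jump-transport generator $A^X$ and of $A^Y$, so both arguments close. In short: same estimates, different (continuous versus discretized) perturbation formula; either is acceptable, and each buys a small economy the other pays for elsewhere.
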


The proof is given in the Appendix. 

Theorem \ref{theo:approx} is a first step towards convergence in law and shows that the diffusion process \eqref{eq:cascadeapprox} is a good approximation of \eqref{eq:cascadepdmp}, as $N \to \infty .$ 
However,  in the limit of $N \to \infty $, \eqref{eq:cascadeapprox} is not a diffusion anymore, since the diffusive term tends to zero. Both processes, $X^N $ and $Y^N$, tend to the limit process described in section \ref{sec:propagation}. This convergence is of rate $ \frac{1}{N}, $ which is slower than the approximation proved in Theorem \ref{theo:approx}. 

\subsection{Oscillations of the approximating diffusion at fixed population size}
We now show to which extent the approximating diffusion process \eqref{eq:cascadeapprox} imitates the oscillatory behavior of 
the limit system described in section \ref{sec:3}. Consider two populations, $n=2$, where the memory kernels are given by \eqref{eq:erlang}. 

We denote by 
\begin{equation}\label{eq:drift}
b(x) := \left( 
\begin{array}{c} 
- \nu_1 x^{ 1 } + x^{2 } \\
- \nu_1 x^{2} + x^{3} \\
\vdots\\
- \nu_1 x^{\eta_1 +1} + c_1 f_2 ( x^{\eta_1 +2} ) \\
- \nu_2 x^{\eta_1+2} + x^{\eta_1+3 } \\
\vdots \\
- \nu_2 x^{\kappa } + c_2 f_1 ( x^{1} ) 
\end{array}
\right) 
\end{equation}
the drift vector of \eqref{eq:cascadeapprox}. Moreover, we introduce the $ \kappa \times 2-$diffusion matrix 
\begin{equation}\label{eq:diff}
\sigma (x) := \left( 
\begin{array}{cc}
0&0\\
\vdots & \vdots \\
0 & \frac{c_1}{\sqrt{p_2}} \sqrt{ f_2 ( x^{\eta_1+2 } ) } \\
0 & 0 \\
\vdots & \vdots \\
\frac{c_2}{\sqrt{ p_1} } \sqrt{ f_1 ( x^{1 } ) } & 0 
\end{array}
\right) ,
\end{equation}
where $p_1 = N_1 / N , p_2 = N_2/ N. $ Then we may rewrite \eqref{eq:cascadeapprox} as 
\begin{equation}\label{eq:diffusionsmallnoise}
 d Y^N (t)  = b ( Y^N (t) ) dt + \frac{1}{ \sqrt{N}} \sigma ( Y^N (t) ) d B(t) , 
\end{equation} 
with $ B(t) = ( B^1(t), B^2(t) )^T$. 

Throughout this section, we assume the conditions of Theorem \ref{theo:orbit}, in particular, suppose that  $ c_1 c_2 < 0$. Moreover, suppose that $f_1 $ and $f_2$ are smooth strictly positive non-decreasing functions. In this case, under condition \eqref{eq:unstable}, the associated limit system possesses a non constant periodic orbit which is asymptotically orbitally stable. We will now show that also the finite size system \eqref{eq:diffusionsmallnoise} is attracted to this periodic orbit.

The existence of a global Lyapunov function implies that there exists a compact set $K $ such that process \eqref{eq:diffusionsmallnoise} visits $K$ infinitely often, almost surely. More precisely, recalling that $A^Y$ denotes the infinitesimal generator of \eqref{eq:diffusionsmallnoise}, we have the following result. In order to simplify notation, in the next proposition, we write $ x = (x^{1,0}, \ldots, x^{ 1, \eta_1}, x^{2, 0}, \ldots, x^{2 , \eta_2} ) $ for generic elements of $\R^\kappa .$ 

\begin{prop}\label{prop:lyapunovglobal}
Grant the conditions of Theorem \ref{theo:orbit}. Let $ j(x) $ be a smoothed version of $ |x|, $ i.e.,  $j(x) = |x| $ for all $ |x| \geq 1 ,$  $|j '(x) | \le C , |j'' (x) | \le C ,$ for some constant $C,$ for all $x.$ Put then $ G (x) :=  \sum_{k=1}^2\sum_{l=0}^{\eta_k}\frac{l+1}{\nu_k^{l} } j( x^{k,l} ) . $ Then $G$ is a Lyapunov-function for \eqref{eq:diffusionsmallnoise} in the sense that  
$$ A^Y G  ( x) \le - c G  (x) + d ,$$
for some constants $c, d > 0$ depending on  $ \max_k \|f_k\|_\infty .$ 
\end{prop}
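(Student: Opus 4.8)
The plan is to compute $A^Y G(x)$ explicitly using the cascade structure of the drift \eqref{eq:drift} and the very sparse diffusion matrix \eqref{eq:diff}, and then to show that the linear terms $-\nu_k x^{k,l}$ dominate up to a bounded remainder. First I would write out the generator acting on a function of the form $G(x) = \sum_{k=1}^2 \sum_{l=0}^{\eta_k} \frac{l+1}{\nu_k^l} j(x^{k,l})$. Since $\sigma$ has nonzero entries only in the rows corresponding to $x^{1,\eta_1}$ and $x^{2,\eta_2}$, the second-order part of $A^Y G$ equals $\frac{1}{2N}\left[ \frac{c_1^2}{p_2} f_2(x^{2,0}) \frac{\eta_1+1}{\nu_1^{\eta_1}} j''(x^{1,\eta_1}) + \frac{c_2^2}{p_1} f_1(x^{1,0}) \frac{\eta_2+1}{\nu_2^{\eta_2}} j''(x^{2,\eta_2}) \right]$, which is bounded in absolute value by a constant depending only on $\max_k \|f_k\|_\infty$ and the bound $C$ on $j''$; this contributes only to the additive constant $d$.

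Next I would handle the first-order (drift) part. For each interior coordinate $0 \le l < \eta_k$ the contribution is $\frac{l+1}{\nu_k^l} j'(x^{k,l}) \left( -\nu_k x^{k,l} + x^{k,l+1} \right)$, and for the top coordinate $l = \eta_k$ it is $\frac{\eta_k+1}{\nu_k^{\eta_k}} j'(x^{k,\eta_k})\left( -\nu_k x^{k,\eta_k} + c_k f_{k+1}(x^{k+1,0}) \right)$. On the region $|x^{k,l}| \ge 1$ we have $j(x^{k,l}) = |x^{k,l}|$ and $j'(x^{k,l}) = \mathrm{sgn}(x^{k,l})$, so $j'(x^{k,l})(-\nu_k x^{k,l}) = -\nu_k |x^{k,l}| = -\nu_k j(x^{k,l})$; on the region $|x^{k,l}| < 1$ both $|x^{k,l}|$ and $j(x^{k,l})$ are bounded, so $j'(x^{k,l})(-\nu_k x^{k,l})$ differs from $-\nu_k j(x^{k,l})$ by a bounded amount. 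Thus, up to a global additive constant, the diagonal terms sum to $-\sum_{k,l} \frac{(l+1)\nu_k}{\nu_k^l} j(x^{k,l}) = -\sum_{k,l} \frac{l+1}{\nu_k^{l-1}} j(x^{k,l})$.

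The point is then that the off-diagonal coupling terms $\frac{l+1}{\nu_k^l} j'(x^{k,l}) x^{k,l+1}$ are absorbed by the diagonal terms of the next coordinate: since $|j'| \le C$ and, on $|x^{k,l+1}| \ge 1$, $|x^{k,l+1}| = j(x^{k,l+1})$, we bound $\left| \frac{l+1}{\nu_k^l} j'(x^{k,l}) x^{k,l+1} \right| \le C \frac{l+1}{\nu_k^l} j(x^{k,l+1}) + (\text{const})$, and the chosen weights $\frac{l+1}{\nu_k^l}$ are precisely tuned so that this is a fixed fraction of the diagonal term $\frac{l+2}{\nu_k^{l+1}}\nu_k j(x^{k,l+1}) = \frac{l+2}{\nu_k^l} j(x^{k,l+1})$ attached to $x^{k,l+1}$ — a telescoping/cascade cancellation, with a little room to spare from the factor $\frac{l+2}{l+1} > 1$. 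Finally the top term $\frac{\eta_k+1}{\nu_k^{\eta_k}} j'(x^{k,\eta_k}) c_k f_{k+1}(x^{k+1,0})$ is bounded by $\frac{\eta_k+1}{\nu_k^{\eta_k}} C \|f_{k+1}\|_\infty$, contributing again only to $d$. Collecting everything gives $A^Y G(x) \le -c\, G(x) + d$ for a suitable $c>0$ and $d$ depending on $\max_k \|f_k\|_\infty$. The main obstacle I anticipate is the bookkeeping in the cascade cancellation: one must check that the constants generated when passing from coordinate $l$ to $l+1$ (namely the factor $C$ from the bound on $|j'|$ and the ratio of consecutive weights) can be arranged so that a strictly negative multiple of $G$ survives — this works because $\nu_k$ only enters through fixed powers and the weights can, if needed, be rescaled by an overall small constant $\varepsilon$ to dominate the $C$ from $|j'|$; the near-origin region where $j$ is merely a smoothing of $|x|$ must be treated by the compactness argument just noted, pushing all bounded discrepancies into $d$.
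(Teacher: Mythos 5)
Your proof follows essentially the same route as the paper's: expand $A^Y G$, use $j' = \mathrm{sign}$ on $\{|x^{k,l}| \ge 1\}$ so that each diagonal term contributes $-\nu_k$ times its weight, let the off-diagonal coupling term telescope against the diagonal term of the next coordinate thanks to the weights $\frac{l+1}{\nu_k^l}$ (leaving $-\frac{1}{\nu_k^l}j(x^{k,l+1})$), and push the bounded $f$-terms, the second-order part and the near-origin region into the constant $d$. One small caveat: rescaling all weights by a common $\varepsilon$ does not change the ratio of consecutive weights and therefore cannot absorb a bound $C>1$ on $|j'|$; the telescoping with these particular weights genuinely needs $|j'|\le 1$, which is automatic for a convolution smoothing of $|x|$ and is exactly what holds on the region $|x^{k,l}|\ge 1$ where the paper (like you) carries out the main computation.
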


\begin{proof}
The above statement follows from the fact that the drift part of $A^Y G ( x) $ is given by 
$$
 \sum_{k=1}^2 \sum_{ j = 0 }^{\eta_k - 1} \left\{ - \nu_k x^{k, j } + x^{k, j +1} \right\} \frac{\partial G}{\partial x^{k, j }}  - \sum_{k=1}^2 \nu_k x^{k, \eta_k } \frac{\partial G}{\partial x^{k, \eta_k }} + \sum_{k=1}^2 c_k f_{k+1} ( x^{k+1, 0 } ) \frac{\partial G}{\partial x^{k, \eta_k }}  .$$
But for $ |x^{k, j } | \geq 1,$ 
\begin{multline*}
\left\{ - \nu_k x^{k, j } + x^{k, j +1} \right\} \frac{\partial G}{\partial x^{k, j }}  
= \left\{ - \nu_k x^{k, j } + x^{k, j +1} \right\} sign ( x^{k,j}) \frac{  (j+1)}{(\nu_k)^{j}} \\
\le  - \frac{ ( j+1) }{\nu_k^{j-1} } | x^{k, j } | + \frac{ ( j+1) }{\nu_k^{j} } | x^ {k, j +1 }  | .
\end{multline*}
As a consequence, if $ |x^{k, j}| \geq 1 $ for all $k, j,$ the contribution of the drift part can be upper bounded by 
$$
 -   \sum_{k=1}^2  \nu_k |x^{k, 0} | -  \sum_{k=1}^2\sum_{j=1}^{\eta_k } \frac{1}{\nu_k^{j-1}} |x^{k, j }  | + 2 \max ( \frac{\eta_1\! +\! 1}{\nu_1^{\eta_1}}\| f_1\|_\infty , \frac{\eta_2\! +\! 1}{\nu_2^{\eta_2}}\| f_2\|_\infty)  \le - c G(x) + d ,
$$
for some constants $c , d > 0 .$ On the other hand, the contributions coming from terms with $ | x^{k, j } | \le 1 $ are bounded, and the contribution coming from the diffusion part of $A^Y G $ is bounded as well. This finishes the proof.
\end{proof}

In particular, putting $K = \{ G \le 2d/c\} , $ it follows that 
\begin{equation}\label{eq:driftcond0}
 A^Y G (x) \le - \frac{c}{2} G  (x) + d \1_K ( x) .
 \end{equation}
It is well-known (see e.g.\ Douc, Fort and Guillin (2009) \cite{DFG}) that (\ref{eq:driftcond0}) implies that  
\begin{equation}\label{eq:moment}
\E_x (e^{ \frac{c}{2}  \tau_K}) \le  G(x) ,
\end{equation}
where $\tau_K = \inf \{ t \geq 0 : Y^N ( t) \in K\} .$ 
Thus, the process comes back to the compact $K$ infinitely often, and excursions out of $K$ have exponential moments. In particular, we can concentrate on the study of the trajectories inside $K.$

We will now study the behavior of the trajectories of $Y^N$ inside the compact $K$. By Theorem \ref{theo:orbit}, under condition \eqref{eq:unstable}, the limit system possesses a non constant periodic orbit which is asymptotically orbitally stable. Denote this orbit by $\Gamma $ and let $ T$ be its periodicity. We suppose without loss of generality that $ \Gamma \subset K.$ In the following, we will show that each time the process is inside $K,$ it will also visit vicinities of the periodic orbit (in a sense that will be made precise in Theorem \ref{theo:oscillation} below). We start with support properties. 

Fix $\varepsilon > 0$ and let $ S (\varepsilon ,  \Gamma ) := \{ x : d (x, \Gamma ) < \varepsilon \} $ be a tube around this orbit. Denote by $ Q_x^{Y}$ the law of the solution $ (Y^N(t), t \geq 0) $ of  \eqref{eq:diffusionsmallnoise}, starting from $Y^N (0) = x .$ Fix $ t_1 > 1 $ and let $ O = \{\varphi  \in C ( \R_+; \R^\kappa ) : \varphi(t) \in  S( \varepsilon, \Gamma ) \; \forall \;  1  \le t \le t_1 \} .$ Then we have the following first result concerning the support of $ Q_x^{Y}.$

\begin{prop}\label{prop:control}
Under the conditions of Theorem \ref{theo:orbit} and supposing that $ f_1 $ and $f_2$ are strictly positive, the following holds true. For all $x \in \R^\kappa ,$ we have that  $ Q_x^{Y} ( O) > 0 .$ 
\end{prop}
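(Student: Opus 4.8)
The plan is to use a support theorem for diffusions (Stroock--Varadhan type), which states that the support of $Q_x^Y$ in $C(\R_+;\R^\kappa)$ is exactly the closure of the set of solutions of the associated control system
\begin{equation}\label{eq:controlsys}
 \dot \psi (t) = b ( \psi (t) ) + \tfrac{1}{\sqrt N} \sigma ( \psi (t) ) u (t) , \quad \psi(0) = x ,
\end{equation}
where $u$ ranges over all (say, piecewise constant, or locally integrable) control functions $u : \R_+ \to \R^2$. Since $b$ and $\sigma$ are smooth (as $f_1,f_2$ are smooth) and $\sigma$ has full rank $2$ on its image precisely where $f_1,f_2 > 0$, which holds everywhere by assumption, the hypotheses of the support theorem are met. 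Hence $Q_x^Y(O) > 0$ will follow once we exhibit a control $u$ steering the system from $x$ into the tube $S(\varepsilon, \Gamma)$ before time $1$ and keeping it there up to time $t_1$; indeed any such control solution is then in the interior (relative to the support) of $O$, so $Q_x^Y$ assigns it positive mass.

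First I would reduce the reachability question to a controllability statement for the cascade structure. The key observation is that \eqref{eq:cascadeapprox}, equivalently \eqref{eq:drift}--\eqref{eq:diff}, has a triangular ``cascade'' form: the noise/control enters only the coordinates $x^{1,\eta_1}$ and $x^{2,\eta_2}$, but each block $x^{k,0}, \dots, x^{k,\eta_k}$ is an iterated integrator chain driven from its top coordinate. Concretely, from \eqref{eq:tobeusedlater} one reads off that fixing the trajectory $t \mapsto x^{k,0}(t)$ on an interval as a sufficiently smooth curve determines $x^{k,1}, \dots, x^{k,\eta_k}$ by successive differentiation, $x^{k,l+1} = \dot x^{k,l} + \nu_k x^{k,l}$, and then the required control $u_k$ on the top coordinate is recovered from the last equation of \eqref{eq:cascade}, namely $u_k$ is proportional to $\dot x^{k,\eta_k} + \nu_k x^{k,\eta_k} - c_k f_{k+1}(x^{k+1,0})$ (times $\sqrt{N}/(c_k\sqrt{f_{k+1}})$, which is well defined and smooth since $f_{k+1} > 0$). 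Thus the control system is, in effect, flat with flat outputs $(x^{1,0}, x^{2,0})$: \emph{any} smooth enough pair of curves $(x^{1,0}(\cdot), x^{2,0}(\cdot))$ is trackable by an admissible control.

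Next I would construct the desired trajectory explicitly. On $[0,1]$, choose smooth curves $t\mapsto x^{k,0}(t)$ that interpolate between the given initial values $x^{k,0}(0)$ (components of $x$) — together with the induced initial values of $x^{k,l}(0)$, which are also components of $x$ and impose Hermite-type boundary conditions at $t=0$ — and, at $t=1$, the value of the periodic orbit $\Gamma$ at some phase, matched together with all derivatives up to order $\eta_k$ so that the full state $(x^{k,l}(1))$ lands on $\Gamma$. Such interpolating polynomials exist since we prescribe finitely many derivatives at two endpoints. Then on $[1, t_1]$ simply let the curve follow $\Gamma$ itself (which is a genuine solution with control $u\equiv 0$). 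Concatenating and smoothing at $t=1$ gives a trajectory $\psi$ with $\psi(t)\in S(\varepsilon,\Gamma)$ for all $1\le t\le t_1$ — in fact $\psi(t)\in\Gamma$ there — driven by a bounded, piecewise smooth control $u$. By the support theorem, $\psi$ lies in the support of $Q_x^Y$, and since $O$ is open and contains a whole neighborhood of $\psi$ in sup-norm on $[1,t_1]$ (we even have room $\varepsilon$ to spare), it follows that $Q_x^Y(O) > 0$.

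The main obstacle, and the step requiring the most care, is the verification that the support theorem applies in this degenerate setting and that the reconstructed control is admissible — in particular that $\sigma$ being only rank $2$ (not full rank $\kappa$) is not an issue. This is exactly where the cascade structure is essential: one does not need ellipticity, only that the chosen control trajectory stays in a region where $f_1, f_2$ remain bounded below (automatic, since $\Gamma\subset K$ is compact and $f_i>0$, and the interpolation on $[0,1]$ stays in a compact set), so that the map from flat outputs to controls is smooth and the controls are bounded. I would invoke the version of the Stroock--Varadhan support theorem valid for smooth coefficients with no nondegeneracy hypothesis (the inclusion ``support $\supseteq$ closure of control solutions'' holds in full generality), which is all that is needed here since we only want a lower bound $Q_x^Y(O)>0$.
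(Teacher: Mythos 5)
Your proposal is correct and follows essentially the same route as the paper: invoke the (one-sided, nondegeneracy-free) Stroock--Varadhan support theorem, exploit the cascade/flatness structure with flat outputs $(x^{1,0},x^{2,0})$ to reconstruct all remaining coordinates and the control by successive differentiation and inversion of the top-coordinate equations (using $f_1,f_2>0$), interpolate onto $\Gamma$ during $[0,1]$ and follow it with zero control afterwards. If anything, your explicit mention of the Hermite-type matching of derivatives at $t=0$ (so that the derived coordinates agree with all components of the initial condition $x$) is slightly more careful than the paper's prescription, which only records the conditions $\gamma^1(0)=x^1$ and $\gamma^2(0)=x^{\eta_1+2}$.
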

Hence, the diffusion $Y^N$ visits the tube around $ \Gamma ,$ starting from any initial point $x \in \R^\kappa ,$ during the time interval $ [1, t_1],$ with positive probability. The fact that we choose time intervals $[1, t_1 ] $ is not important, and we could equally work with any time interval $ [ t_0, t _1 ] , $ for any fixed $ t_0 > 0 ,$ see the proof in Appendix \ref{sec:controlproof}. 

Note that the above proposition gives a statement concerning the support of the law of $Y^N  $ for fixed $N.$  Its proof relies on the support theorem for diffusions. The result does not say anything about the actual value of the probability of tubes around $ \Gamma ,$ nor about the precise $N \to \infty -$asymptotics : this is outside the scope of the present paper and will be the subject of a future work. 

Proposition \ref{prop:control} implies that there is strictly positive probability to visit tubes around $\Gamma, $ yet, we do not know that such visits arrive almost surely, within a finite time horizon. In order to prove this, we will show that $ x \mapsto Q_x^N ( O) $ is lower-bounded on compacts. Thus, 
we have to control the dependence on the starting configuration $x$ in the above proposition. Therefore, we show that $ x \mapsto Q_x^Y ( A) $ is continuous for Borel sets $ A \in {\cal B} ( C ( \R_+; \R^\kappa ) ) $ which are e.g.\ of the form $ O $, i.e.,  we need to show that the process $Y^N$ is strong Feller. However, $Y^N$ is a degenerate diffusion process since Brownian noise only appears in the coordinates $Y^N_{1, \eta_1} $ and $Y^N_{2, \eta_2}.$ Following Ishihara and Kunita (1974) \cite{Ich-Ku}, Lemma 5.1, we therefore show that the process satisfies the weak H\"ormander condition. Notice that due to the specific form of the diffusion matrix in equation \eqref{eq:diff}, the It{\^o} and the Stratonovich forms are the same. 

Recall that for smooth vector fields $f(x)$ and $ g( x) : \R^\kappa   \to \R^\kappa , $ the Lie bracket $ [f, g ] $ is defined by
$$ [f, g ]^i = \sum_{j = 1}^\kappa  \left( f^j \frac{ \partial g^i }{\partial x^j } - g^j \frac{ \partial f^i }{\partial x^j } \right) \; , \; i = 1, \ldots , \kappa . $$
 We write $\sigma^1 , \sigma^2 : \R^\kappa \to \R^\kappa $ for the two column vectors constituting the diffusion matrix $ \sigma $ of \eqref{eq:diff}.

\begin{defin}\label{def:hoer}
Define a set ${\cal L}$ of vector fields by the `initial condition' $\sigma^1 , \sigma^2  \in {\cal L}$ and  an arbitrary number of iteration steps 
\begin{equation}\label{eq:iteration}
L\in {\cal L} \;\Longrightarrow \; [ b,L] , [ \sigma^1  , L], [\sigma^2 , L ]  \in {\cal L}  \;.   
\end{equation}
For $M \in \N$, define the subset ${\cal L}_M$ by the same initial condition and at most $M$ iterations (\ref{eq:iteration}). Write ${\cal L}_M^*$ for the closure of ${\cal L}_M$ under Lie brackets; finally, write 
$$
\Delta_{{\cal L}_M^*}\;:=\;\mbox{\rm LA}({\cal L}_M)
$$
for the linear hull of ${\cal L}_M^*$, i.e.\ the Lie algebra spanned by ${\cal L}_M$. 
\end{defin}

\begin{defin}
We say that a point $z^* \in \R^\kappa $ is of {\em full weak H\"ormander dimension} if there is some $M \in \N$ such that   
\begin{equation}\label{fwHd}
({\rm dim}\; \Delta_{{\cal L}^*_M})(z^*) \; =\; \kappa  . 
\end{equation}
\end{defin}

Due to the cascade structure of the drift vector $b$ and since $f_1 ( \cdot ) , f_2 (\cdot) > 0 $ on $ \R,$  it is straightforward to show that the weak H\"ormander dimension holds at all points $x \in \R^\kappa:$

\begin{prop}
Suppose that $f_1 $ and $f_2$ are smooth and strictly positive. Then for all $x \in \R^\kappa , $ $({\rm dim}\; \Delta_{{\cal L}^*_M})(x) \; =\; \kappa  ,$ where $ M = \max ( \eta_1, \eta_2 ) .$ 
\end{prop}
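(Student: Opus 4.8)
The plan is to compute the Lie algebra generated by $\sigma^1,\sigma^2$ and the drift $b$ explicitly, exploiting the triangular ``cascade'' structure of $b$. Recall that $\sigma^2$ is the constant multiple of the basis vector $e_{\eta_1+1}$ (the coordinate $x^{1,\eta_1}$) with the factor $\frac{c_1}{\sqrt{p_2}}\sqrt{f_2(x^{\eta_1+2})}$, and $\sigma^1$ is the constant multiple of $e_\kappa$ (the coordinate $x^{2,\eta_2}$) with factor $\frac{c_2}{\sqrt{p_1}}\sqrt{f_1(x^1)}$. So at a given point $x$, $\sigma^1(x)$ and $\sigma^2(x)$ span the two ``noisy'' coordinate directions, since $f_1,f_2>0$. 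First I would bracket $b$ with $\sigma^2$: because $\sigma^2$ points purely in the $x^{1,\eta_1}$-direction and $b$ depends on $x^{1,\eta_1}$ only through the $\eta_1$-th row (where $x^{1,\eta_1}$ appears as $-\nu_1 x^{1,\eta_1-1}+x^{1,\eta_1}$, i.e.\ the row for $\dot x^{1,\eta_1-1}$), one gets $[b,\sigma^2]$ proportional to $e_{\eta_1}$ modulo a multiple of $\sigma^2$ itself — that is, up to terms already in $\mathcal L$, we recover the direction $e_{\eta_1}=e_{\eta_1+1-1}$ of coordinate $x^{1,\eta_1-1}$. Iterating this bracket with $b$ a total of $\eta_1$ times walks down the first chain and produces the directions $e_{\eta_1+1},e_{\eta_1},\dots,e_1$, i.e.\ all the coordinates $x^{1,l}$, $0\le l\le\eta_1$. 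Symmetrically, bracketing $b$ repeatedly with $\sigma^1$ walks down the second chain and produces all the coordinates $x^{2,l}$, $0\le l\le\eta_2$.

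The one subtlety is that the functions $f_2(x^{\eta_1+2})$ and $f_1(x^1)$ inside $\sigma^2,\sigma^1$ are not constant, so each Lie bracket generates, besides the ``clean'' lower coordinate direction, extra terms involving derivatives of $\sqrt{f_j}$ times previously-obtained vector fields; but since we take the linear hull $\Delta_{\mathcal L^*_M}$, and since $f_1,f_2>0$ guarantees the scalar prefactors never vanish, these extra terms live in the span already built and do not obstruct the argument. Concretely, one shows by induction on $m$ that for $0\le m\le\eta_1$, the direction $e_{\eta_1+1-m}$ (coordinate $x^{1,\eta_1-m}$) lies in $\Delta_{\mathcal L^*_m}$, using that at step $m$ the bracket of $b$ with the vector field pointing in direction $e_{\eta_1+1-(m-1)}$ equals (a nonzero scalar times) $e_{\eta_1+1-m}$ plus a linear combination of $e_{\eta_1+1},\dots,e_{\eta_1+1-(m-1)}$ already present; the analogous induction handles the second chain. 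Taking $M=\max(\eta_1,\eta_2)$ we have collected, at every $x\in\R^\kappa$, all $\kappa$ standard basis directions $e_1,\dots,e_\kappa$, hence $(\dim\Delta_{\mathcal L^*_M})(x)=\kappa$.

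The main obstacle is purely bookkeeping: one must verify carefully that the ``contamination'' terms produced by differentiating the non-constant coefficients $\sqrt{f_j}$ — and the extra multiples of $\sigma^1,\sigma^2$ picked up from the diagonal $+x^{k,l}$ entries of $b$ — indeed stay inside the already-generated linear span at each step, so that the inductive extraction of the next coordinate direction is legitimate. Given the strict positivity of $f_1,f_2$ (which keeps all relevant scalar factors away from zero) and the strictly lower-triangular way in which each coordinate feeds into the next in \eqref{eq:cascadeapprox}, this is routine, and I would present it as the induction sketched above rather than writing out every bracket.
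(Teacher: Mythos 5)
Your proposal is correct and follows exactly the route the paper takes (the paper's own proof is just the one-line remark that one computes $[\sigma^1,b]$ and $[\sigma^2,b]$ and then successively brackets with $b$); your inductive walk down the two cascade chains, with the observation that the non-constant prefactors $\sqrt{f_j}$ and the diagonal terms of $b$ only contribute directions already in the span, fills in precisely the bookkeeping the paper omits.
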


\begin{proof}
The proof is done by first calculating the Lie-bracket $ [ \sigma^1, b] $ and $ [\sigma^2 , b ]$ and then successively bracketing with $b.$ 
\end{proof}

Once the weak H\"ormander condition holds everywhere, it follows that the process is strongly Feller, see  \cite{Ich-Ku}. As a consequence, the following holds.

\begin{cor}\label{cor:density}
Let $A \in {\cal B} ( C ( \R_+ ; \R^\kappa ) ) $ be of the type $ A = \{ \varphi : \varphi (t) \in B \; \forall \; t_1 \le t \le t_2 \}  ,$ for some $ B \in {\cal B} ( \R^\kappa )$ and for $t_1<t_2$. Then 
$$ x \mapsto Q_x^Y (A) \mbox{ is continuous.} $$
\end{cor}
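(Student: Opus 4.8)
The plan is to deduce the statement from the strong Feller property established above together with the Markov property of $Y^N$. Recall that, by the preceding proposition, the weak H\"ormander condition holds at every point of $\R^\kappa$, so by the result of Ishihara and Kunita \cite{Ich-Ku} the transition semigroup $P_t^Y$ is strong Feller for every $t > 0$: that is, $x \mapsto P_t^Y g(x) = \E_x[g(Y^N(t))]$ is continuous for every bounded measurable $g$, and moreover the transition kernel $P_t^Y(x, dy)$ has a density $p_t^Y(x,y)$ which is jointly continuous (and, in $x$, continuous uniformly on compacts for $t$ bounded away from $0$).

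First I would reduce to the case where $A$ depends on the path only through finitely many time-coordinates. Since $B$ is a fixed Borel set and the constraint $\varphi(t) \in B$ for all $t \in [t_1,t_2]$ is a continuum of constraints, I would approximate: for a Borel set of the type in the statement, write $Q_x^Y(A) = \E_x\big[ \1_B(Y^N(t)) \text{ for all } t \in [t_1,t_2]\big]$, and use right-continuity of trajectories plus the fact that $B$ can be taken closed (the general Borel case, or open $B$, follows by the usual monotone-class / inner-outer regularity arguments applied to the continuous-path law $Q_x^Y$). For closed $B$, the event $\{\varphi(t) \in B \ \forall t \in [t_1,t_2]\}$ is a closed subset of $C(\R_+;\R^\kappa)$, and $Q_x^Y(A) = \lim_{m} Q_x^Y(A_m)$ where $A_m = \{\varphi(t_1 + j(t_2-t_1)/m) \in B, \ j = 0, \dots, m\}$ is a decreasing sequence of cylinder events converging to $A$ by path continuity.

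Next, for a fixed cylinder event $A_m$ determined by times $t_1 = s_0 < s_1 < \dots < s_m = t_2$, I would write, using the Markov property and conditioning successively,
$$
Q_x^Y(A_m) = \int_{\R^\kappa} p_{s_0}^Y(x, y_0) \1_B(y_0) \left( \int_{\R^\kappa} p_{s_1 - s_0}^Y(y_0, y_1) \1_B(y_1) \cdots \right) dy_0 \cdots dy_m,
$$
and observe that the only dependence on $x$ sits in the first factor $p_{s_0}^Y(x, y_0)$ with $s_0 = t_1 > 0$. Since $y_0 \mapsto \big(\text{the inner integral}\big)$ is a bounded measurable function of $y_0$, continuity of $x \mapsto Q_x^Y(A_m)$ follows from the strong Feller property at time $t_1$, i.e. from continuity of $x \mapsto \int p_{t_1}^Y(x,y_0) \psi(y_0)\, dy_0$ for bounded measurable $\psi$. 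Finally I would need the convergence $Q_x^Y(A_m) \to Q_x^Y(A)$ to be \emph{locally uniform} in $x$ so that the limit stays continuous; this is where I expect the only real work to lie. I would obtain it from a modulus-of-continuity / tightness estimate for $Y^N$ that is uniform over $x$ in a compact set — e.g. a Kolmogorov-type moment bound $\E_x[|Y^N(t) - Y^N(s)|^q] \le C_K |t-s|^{1+\beta}$ valid uniformly for $x \in K$, which holds since $b$ and $\sigma$ are continuous hence bounded on the relevant compact range together with the Lyapunov control $\eqref{eq:moment}$ keeping trajectories from escaping too fast. This uniform modulus bound forces $\sup_{x \in K}\big(Q_x^Y(A_m) - Q_x^Y(A)\big) \to 0$, and a uniform limit of continuous functions is continuous, which completes the proof.
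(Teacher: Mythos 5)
You have the right two ingredients (the strong Feller property of $P_{t_1}^Y$ and the Markov property at time $t_1>0$), but the detour through cylinder approximations introduces a genuine gap in the final step. You approximate $A$ by cylinder events $A_m$ and then need $\sup_{x\in K}\bigl(Q_x^Y(A_m)-Q_x^Y(A)\bigr)\to 0$ to conclude that the (decreasing) limit of the continuous functions $x\mapsto Q_x^Y(A_m)$ is continuous rather than merely upper semicontinuous. Your proposed source of uniformity, a Kolmogorov-type modulus-of-continuity bound, does not control $Q_x^Y(A_m\setminus A)$: on that event one has $\varphi(s_j)\in B$ at a grid point $s_j$ close to some $t$ with $\varphi(t)\notin B$, but since $B^c$ is open the distance $d(\varphi(t),B)$ can be arbitrarily small, so no quantifiable oscillation of the path is forced and the moment bound on $|Y^N(t)-Y^N(s)|$ yields no estimate on the probability of this event for a general closed $B$. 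As written, the uniform convergence is not established.

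The fix is to drop the approximation entirely; this is what the paper does. Since $A$ only constrains the path on $[t_1,t_2]$ with $t_1>0$, the Markov property gives in one step
$$ Q_x^Y(A)=\int P_{t_1}^Y(x,dy)\,\Phi(y)=P_{t_1}^Y\Phi(x),\qquad \Phi(y):=Q_y^Y\bigl(\{\varphi:\varphi(t)\in B\ \forall\, 0\le t\le t_2-t_1\}\bigr),$$
and $\Phi$ is bounded and measurable no matter how complicated the path event is (it is measurable with respect to the post-$t_1$ coordinates, and $y\mapsto Q_y^Y(\cdot)$ is measurable for Borel path events). The strong Feller property applied once, to the bounded measurable function $\Phi$ at the strictly positive time $t_1$, then gives continuity of $x\mapsto P_{t_1}^Y\Phi(x)$ directly. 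You in fact observe the essential point yourself when you note that ``the only dependence on $x$ sits in the first factor''; applying that observation to $\Phi$ instead of to the finite-dimensional marginals makes the iterated conditioning, the closed/Borel reduction, and the uniform-convergence step all unnecessary.
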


\begin{proof} 
Using the Markov property, we have
$$ Q^Y_x (A) = \int P^Y_{t_1} (x, dy ) Q^Y_y ( \{ \varphi : \varphi (t) \in B \; \forall \; 0  \le t \le t_2- t_1  \}) = P_{t_1}^Y \Phi ( x) , $$
where $ \Phi ( y) = Q^Y_y ( \{ \varphi : \varphi (t) \in B \; \forall \; 0  \le t \le t_2- t_1  \})$ is measurable and bounded. But $ 
 P_{t_1}^Y \Phi ( x) $ is continuous in $x, $ since $Y^N$ is strong Feller.
\end{proof}

A direct consequence of the above result is the fact that 
$$ K \ni x \mapsto Q_x^Y ( O) $$
is strictly lower bounded for any fixed $t_1 >  1 .$  This fact enables us to conclude our discussion and to show that the diffusion approximation \eqref{eq:diffusionsmallnoise} will have the same type of oscillations as the limit system $ (m_t^1, m_t^2 ).$ Let 
$$ \tau_{\Gamma} ( t_1) := \inf \{ t \geq 0 : Y^N ( s) \in S( \varepsilon, \Gamma )\; \forall \: t \le s \le t+ t_1 \} .$$ 

\begin{theo}\label{theo:oscillation}
Grant the assumptions of Theorem \ref{theo:orbit} and let $\Gamma $ be a non constant periodic orbit of period $T$ of the limit system which is asymptotically orbitally stable. Then for all $\varepsilon > 0 $ and $ t_1  > 1 ,$  there exist $ C, \lambda > 0 $ such that 
$$ \E_x ( e^{ \lambda \tau_\Gamma ( t_1)} ) \le C G  ( x)  .$$
Moreover, 
$$ \lim \sup_{t \to \infty } \1_{ \{Y^N ( s) \in S( \varepsilon, \Gamma )\; \forall \: t \le s \le t+ t_1 \}}  = 1 $$
$\P_x-$almost surely, for all $x \in \R^\kappa .$
\end{theo}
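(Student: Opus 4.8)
The plan is to combine three ingredients already assembled in the paper: the Lyapunov/return estimate \eqref{eq:moment}, the strong Feller property of $Y^N$ (Corollary \ref{cor:density}), and the support statement of Proposition \ref{prop:control}. First I would note that, by Corollary \ref{cor:density} applied to the event $O = \{\varphi : \varphi(t) \in S(\varepsilon,\Gamma)\ \forall\ 1 \le t \le t_1\}$, the map $x \mapsto Q_x^Y(O)$ is continuous on $\R^\kappa$; combined with Proposition \ref{prop:control}, which guarantees $Q_x^Y(O) > 0$ for every $x$, continuity on the compact set $K = \{G \le 2d/c\}$ yields a uniform lower bound $p_0 := \inf_{x \in K} Q_x^Y(O) > 0$. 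This is the quantitative input that upgrades "positive probability from each point" to "success probability bounded below along the compact".

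Next I would set up a renewal/excursion argument. Starting from any $x$, equation \eqref{eq:moment} gives $\E_x(e^{(c/2)\tau_K}) \le G(x)$, so the process enters $K$ in a time with exponential moments. Once in $K$, with probability at least $p_0$ the path lies in $S(\varepsilon,\Gamma)$ throughout $[1,t_1]$, i.e.\ $\tau_\Gamma(t_1)$ has occurred within time $t_1$; otherwise, by the strong Markov property applied at time $t_1$ (or at the next return to $K$), we restart. Iterating, $\tau_\Gamma(t_1)$ is dominated by a geometric number of i.i.d.-like cycles, each of duration $\tau_K + t_1$ with an exponential moment uniformly over starting points in $K$ (using that $\sup_{x \in K} G(x) = 2d/c < \infty$ to bound the return moment at each restart). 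A standard computation — choosing $\lambda > 0$ small enough that the per-cycle moment generating function times the geometric failure probability $1 - p_0$ stays below $1$ — then gives $\E_x(e^{\lambda \tau_\Gamma(t_1)}) \le C\,G(x)$ for suitable $C, \lambda > 0$, handling the first passage from a general $x$ via the initial $\tau_K$ term.

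For the second, almost-sure assertion, finiteness of $\E_x(e^{\lambda \tau_\Gamma(t_1)})$ already gives $\tau_\Gamma(t_1) < \infty$ a.s. To get that the event $\{Y^N(s) \in S(\varepsilon,\Gamma)\ \forall\ t \le s \le t+t_1\}$ occurs for arbitrarily large $t$, I would apply the same estimate repeatedly: after each successful visit to the tube, restart the process from its (random, but $K$-valued or at least with finite $G$) position and invoke the bound again to get another visit in finite time. Since each such waiting time is a.s.\ finite, there are infinitely many disjoint time windows on which the path stays in $S(\varepsilon,\Gamma)$ for length $t_1$, which is exactly $\limsup_{t \to \infty} \1_{\{Y^N(s) \in S(\varepsilon,\Gamma)\ \forall\ t \le s \le t+t_1\}} = 1$, $\P_x$-a.s.

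The main obstacle I anticipate is making the renewal bookkeeping rigorous: after an \emph{unsuccessful} attempt the process may have left $K$ (indeed the event $O$ only controls $[1,t_1]$, not the endpoint), so the restart point is not in $K$ and one must re-insert a $\tau_K$ excursion with its exponential moment before the next attempt — and one needs the moment $\E_y(e^{(c/2)\tau_K}) \le G(y)$ to remain usable, which requires controlling $G(Y^N(t_1))$ on the failure event. This can be done by using \eqref{eq:moment} together with the supermartingale property of $e^{(c/2)t}G(Y^N(t))$ up to $\tau_K$ (or simply by a crude moment bound on $G(Y^N(t_1))$ from the Lyapunov inequality $A^Y G \le -\tfrac{c}{2}G + d\1_K$), but it is the one place where a little care is needed rather than a direct quotation of earlier results. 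Everything else is a routine geometric-series estimate.
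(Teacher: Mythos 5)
Your proposal is correct and follows essentially the same route as the paper, whose proof consists precisely of the three ingredients you list: the uniform lower bound $\inf_{x\in K}Q_x^Y(O)>0$ obtained from Proposition \ref{prop:control} plus the strong Feller continuity and compactness of $K$, the exponential return times to $K$ from \eqref{eq:moment}, and then the conditional Borel--Cantelli lemma (which is exactly your geometric-trials renewal argument spelled out). The bookkeeping point you flag — controlling $G$ at the restart position after a failed attempt — is indeed glossed over in the paper's two-line conclusion, and your proposed fix via the Lyapunov inequality is the right one.
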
 

In particular, the above theorem implies that the process $Y^N$ visits the oscillatory region $ S( \varepsilon, \Gamma )$ during time intervals of length $t_1 $ infinitely often. The choice of $t_1 $ in the above theorem is free. By choosing $t_1 $ larger than the period $T$ of $\Gamma $ (or even choosing $t_1 \geq k T $ for some fixed $k \in \N$), 
this implies that $Y^N$ will present oscillations infinitely often almost surely. 

\begin{proof}
By Proposition \ref{prop:control}, for every $x  $ and every fixed $t_1 > 1,  $  $ Q_x^Y ( O ) > 0 .$ By continuity and since $K$ is compact, this implies that  
$$ \inf_{ x \in K } Q_x^Y ( O)   > 0 .$$
  
Now, by \eqref{eq:moment}, the process visits the compact set $K$ infinitely often, almost surely, with exponential moments for the successive visits of the process to $K.$ The assertion then follows by the conditional version of the Borel-Cantelli lemma. 
\end{proof}

\begin{rem}
The above result shows that $Y^N$ does actually visit tubes around the periodic orbit infinitely often, with waiting times that possess exponential moments. It is valid for a fixed population size $N.$ 
The above result does not show that, starting from a vicinity of $ \Gamma , $ the diffusion stays there for a long time, before being kicked out of the tube due to noise. Such a study is much more difficult, it is related to the large deviation properties of the process  and will be the topic of a future work. 
\end{rem}

\subsection{Simulation study}

In this Section we will check by simulations how $N$ influences the approximating diffusion, and compare the behavior of \eqref{eq:cascade} and \eqref{eq:cascadeapprox}. We set $n=2$ and $c_1=-1, c_2=1$ such that population 1 is inhibitory and population 2 is excitatory, and thus $\rho < 0$, and it is a negative feedback system. We will use the following bounded, Lipschitz and strictly increasing rate functions,
\begin{eqnarray*}
f_1(x) = \left \{ \begin{array}{cc}
10 e^x & \mbox{for } x<\log (20) \\
\frac{400}{1+400e^{-2x}} & \mbox{for } x\geq \log (20) \\
\end{array} \right . &;&
f_2(x) = \left \{ \begin{array}{cc}
e^x & \mbox{for } x<\log (20) \\
\frac{40}{1+400e^{-2x}} & \mbox{for } x\geq \log (20) \\
\end{array} \right . .
\end{eqnarray*}
In the first set of simulations, we put $\nu_1=\nu_2=1$ and 
$\eta_1=3, \eta_2=2$ such that $\kappa=7$. Then $(x^*)^{1,l} = -2.424$ for $l = 0, 1, \ldots , \eta_1$, and $(x^*)^{2,l} = 0.885$ for $l = 0, 1, \ldots , \eta_2$. This yields $\rho = -2.15$ and $\nu^\kappa/ \left( 
\cos ( \frac{\pi }{\kappa })\right)^{ \kappa } =2.08$, and thus, \eqref{eq:unstable2} is fulfilled. The period is approximately $2\pi / \omega=12.98$, where $\omega = |\rho |^{1/\kappa} \sin (\pi/\kappa)  $. Finally, we put $N_1=N_2=20$. Results are presented in Fig.\ \ref{fig:1}. The cascade structure in the memory variables is clearly seen, and the noisy diffusion approximation follows the limit cycle. The periodic behavior is evident.

\begin{figure}[th!]
\centering\includegraphics[width=7cm]{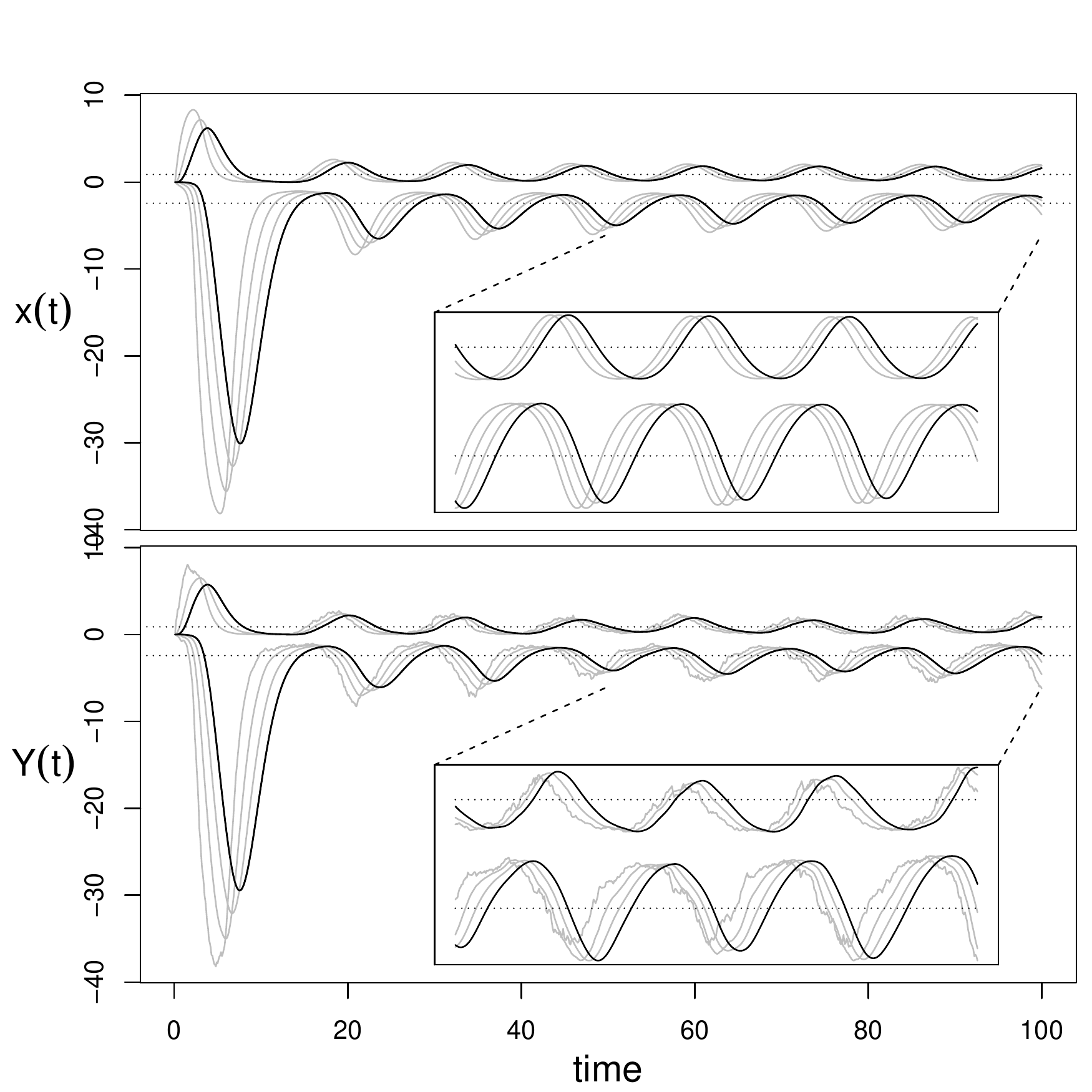} \, \includegraphics[width=7cm]{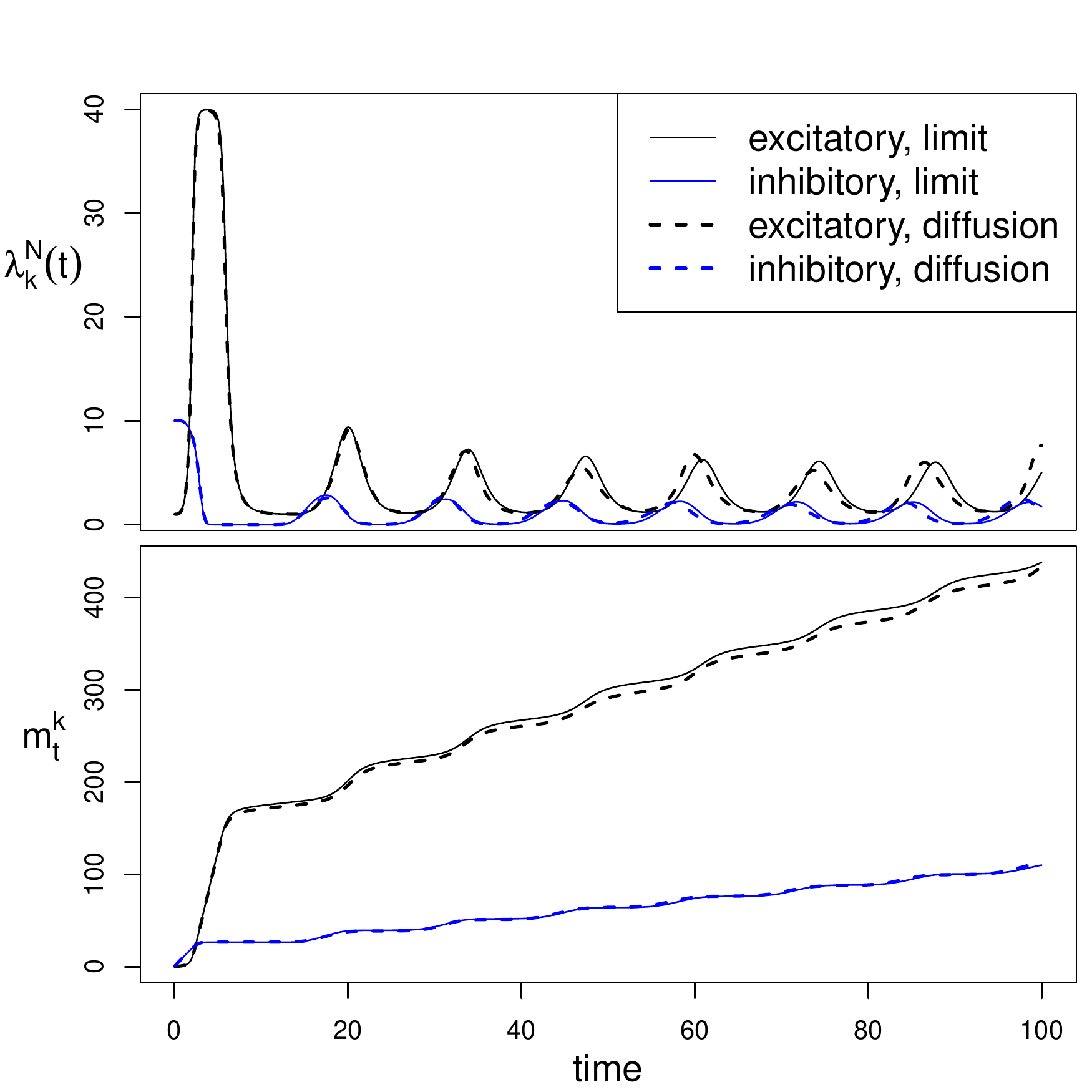}
\caption{ \label{fig:1} Comparison of the limit system and the diffusion approximation through simulations. Left top panel: The limiting system \eqref{eq:cascade}. The black curves are $x^{k,0}$ given in \eqref{eq:memory}, and those that are felt by the system. The gray curves are the variables in the Markovian memory cascade. The dotted lines indicate the location of the critical point. The inset is a blow-up of the last part of the simulation. Parameters are $c_1=-1, c_2=1, \nu_1=\nu_2=1,\eta_1=3, \eta_2=2$. Left bottom panel: As above but for the approximating system \eqref{eq:cascadeapprox}, with $N_1=N_2=20$. Right top panel: Intensity processes corresponding to the simulations on the left. Right bottom panel: The cumulative intensity processes. 
As predicted by the Central Limit Theorem \ref{theo:2}, both approximations are getting worse as $ t$ increases. 
}
\end{figure}

To investigate how close the approximating diffusion follows the oscillatory behavior of the limit system, we simulated 20 repetitions of the noisy process for different values of $N=20,100,200,1000$ and for $p_1=p_2=1/2$, and compared it to the limit system on a later time interval in Fig.\ \ref{fig:2}. For small $N$, the system shifts phase randomly relative to the limiting system, but it maintains the oscillations. For larger $N$, the system follows the limiting system closely on a longer time horizon.

\begin{figure}[th!]
\centering\includegraphics[width=7cm]{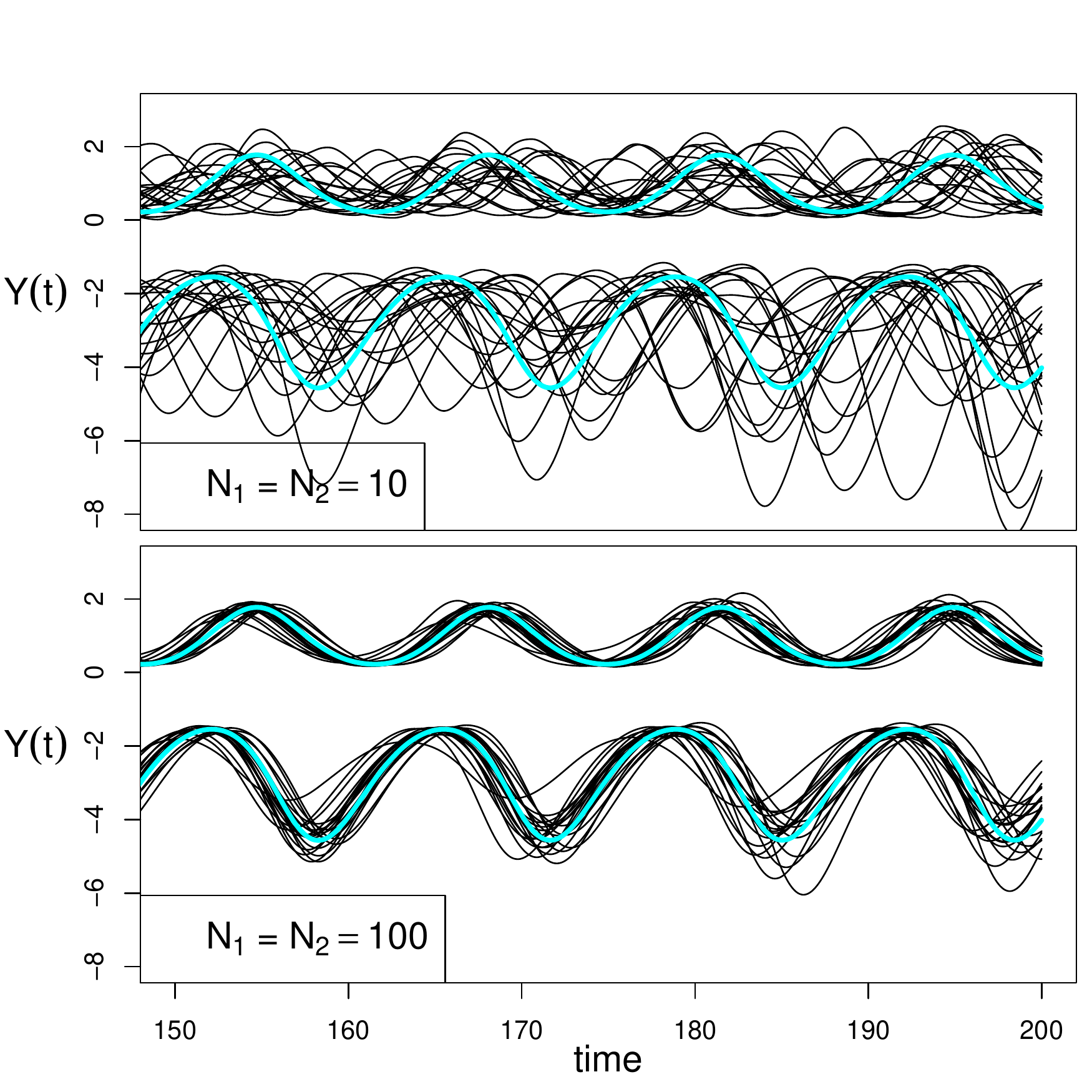} \, \includegraphics[width=7cm]{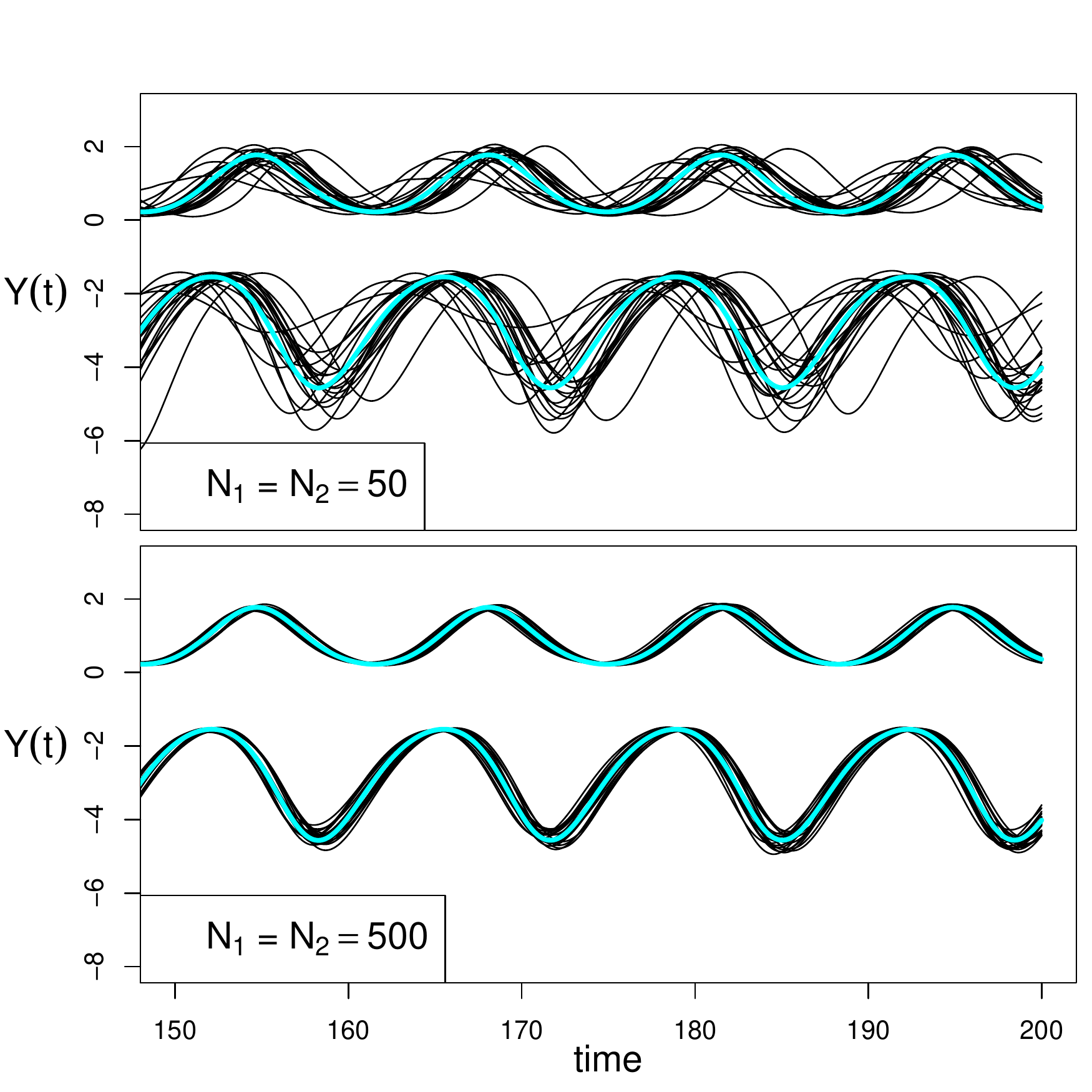}
\caption{ \label{fig:2} Diffusion approximation for increasing $N$, each panel contains 20 realizations. The cyan curve is the limit system. Parameters are as in Fig. \ref{fig:1}. }
\end{figure}

To study phase transitions, we put $\nu=\nu_1=\nu_2=0.8$ and for $\eta_1=\eta_2$ we varied $\kappa=4,8,12,16,20,24$. Condition \eqref{eq:unstable2} is only fulfilled for $6 \leq \kappa \leq 12$. Results are in Fig.\ \ref{fig:3}, left. For $\kappa = 4$ the condition is not fulfilled, and damped oscillations are seen. Then increasing $\kappa$, a phase transition occurs, yielding sustained oscillations. A further phase transition occurs when $\kappa$ becomes larger than 12. For values of $\kappa$ between 12 and 16, damped oscillations happen after the initial large excursion, and when $\kappa$ becomes even larger, the system converges to the steady state in a seemingly monotone manner. 
Note that for $\nu \neq 1$ the steady state depends on the order of the memory.

\begin{figure}[th!]
\centering\includegraphics[width=7cm]{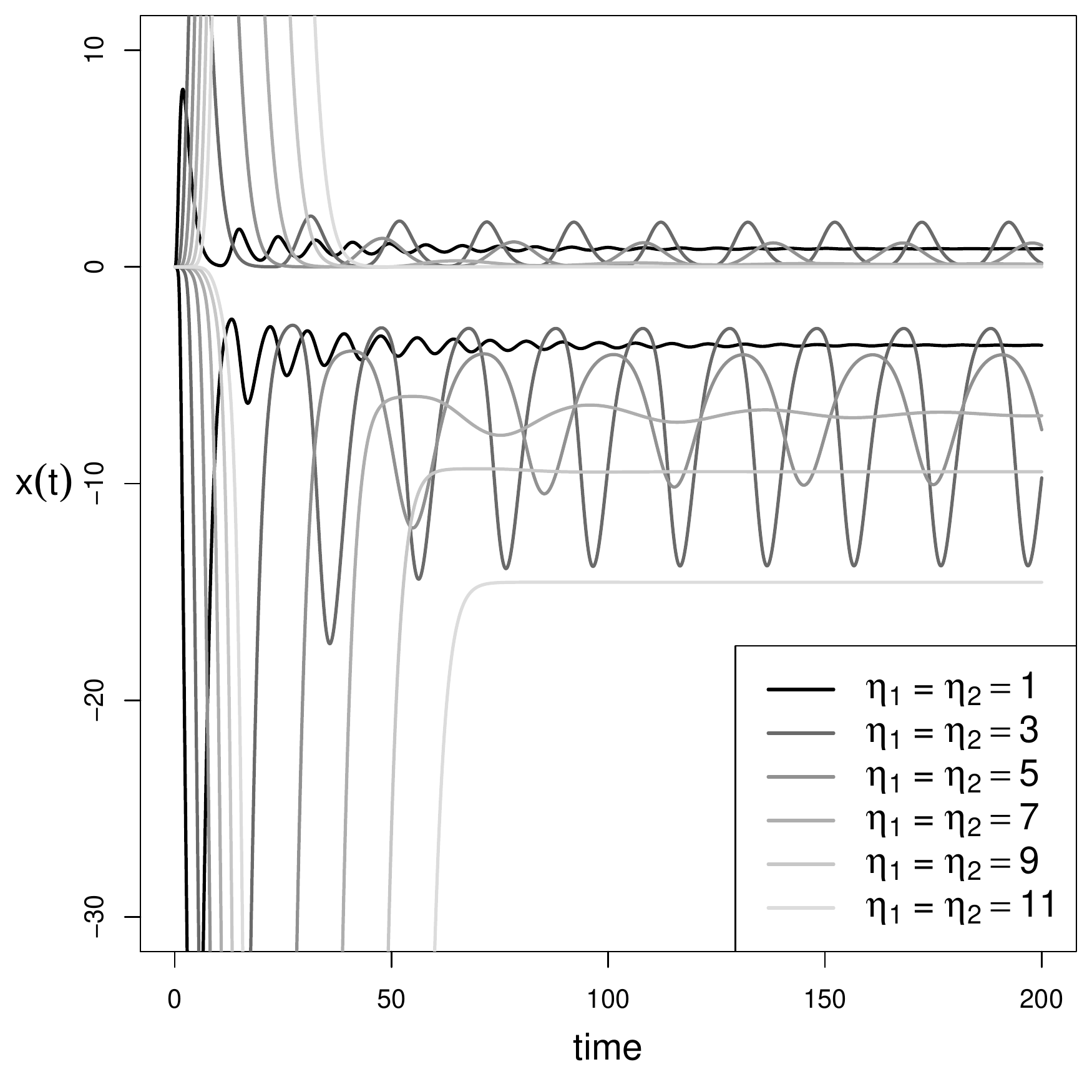} \, \includegraphics[width=7cm]{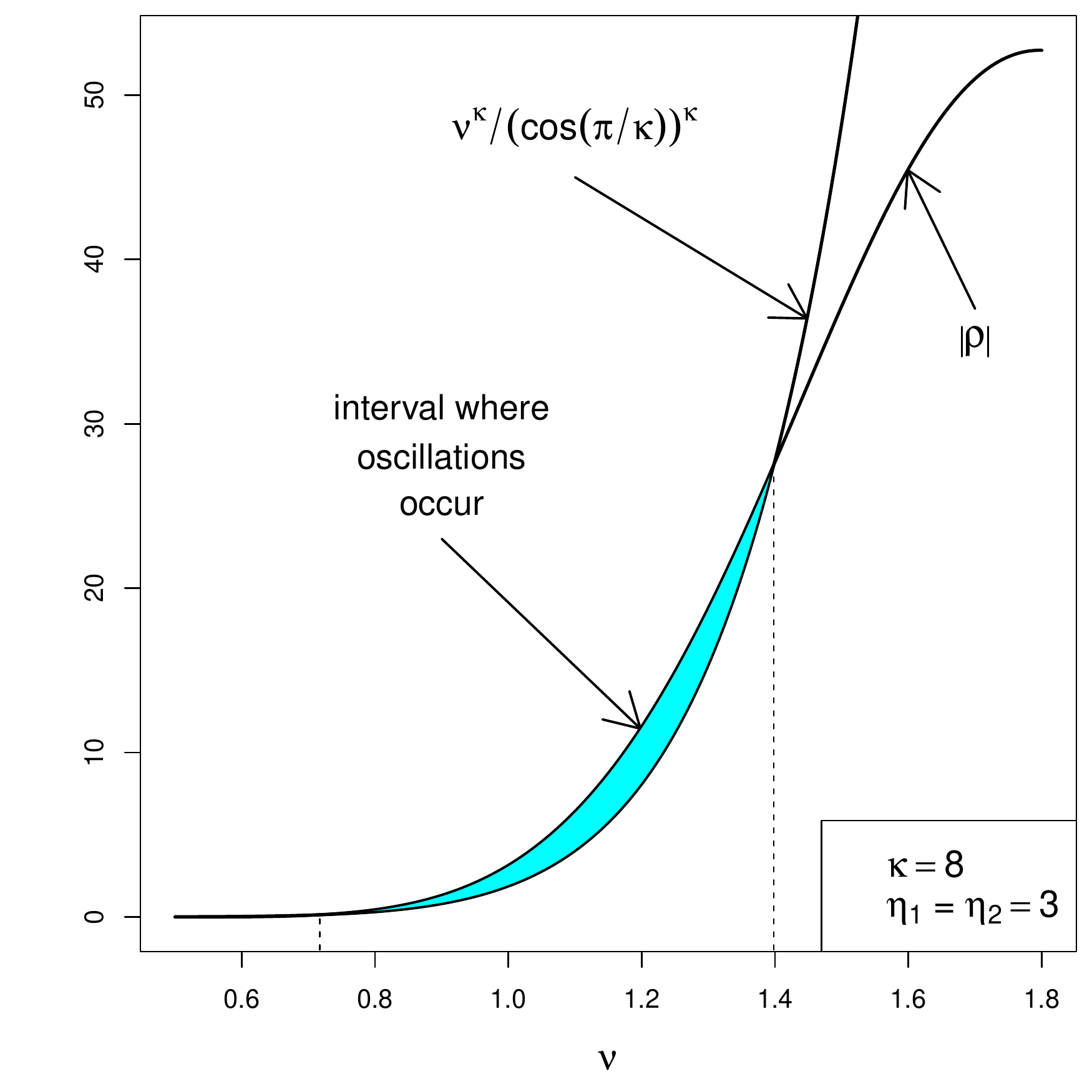}
\caption{ \label{fig:3} Phase transitions and Hopf bifurcations. Left: Phase transitions due to increasing order of the memory. Parameters are $\nu_1=\nu_2=0.8$ and varying $\eta_1=\eta_2$, so that  $\kappa = 2(1+\eta_1)$. Right: Check of condition \eqref{eq:unstable2} as a function of $\nu$. In the cyan area, at least two eigenvalues of $DF(x^*)$ have positive real part, and self-sustained oscillations occur in system \eqref{eq:cascade}. Hopf bifurcations happen at values $\nu_{low}=0.7169$ and at $\nu_{high}=1.3982$. }
\end{figure}

Finally, we fix $\eta_1=\eta_2=3$ and vary $\nu$ in Fig. \ref{fig:3}, right. For low and high values of $\nu$, no oscillations occur, but at $\nu_{low}=0.7169$ a Hopf-bifurcation occurs, and another again at $\nu_{high}=1.3982$. Thus, the system oscillates in an interval $\nu \in (\nu_{low},\nu_{high})$. In general, the interval will depend on the order of the memory.

\section{Appendix}

In the proofs below, $C$ denotes a generic constant, which might change value from equation to equation, and from line to line, even within the same equation.

\subsection{Convolution equations in the matrix case}\label{sec:convolution}

The following is a version of Lemma 26 of \cite{dfh} in the multidimensional case. It is based on old results of \cite{athreya-murthy,crump} on systems of renewal equations.

\begin{lem}[Corollary 3.1 of \cite{crump}, see also Theorem 2.2 of \cite{athreya-murthy}]\label{lem:app}
Let $H(s) $ be given by \eqref{eq:matrixh} such that Assumption \ref{ass:superc} is fulfilled (supercritical case). Put $\Gamma (t) = \sum_{n \geq 1} H^{* n } (t) .$ Then the following assertions hold.\\
1. There is a unique $\alpha_0$ such that $\int_0^\infty e^{- \alpha_0 t } H(t) dt $ has largest eigenvalue $\equiv +1.$ \\
2. $ \Gamma $ is locally bounded. Moreover, for some constant $C,$ 
\begin{equation}\label{eq:growthgamma}
  \Gamma_{i  j}  ( t)  \le  C e^{ \alpha_0 t }. 
\end{equation}
3. For any pair of locally bounded functions $ u , h : \R_+ \to \R^n $ such that $ u = h + H* u, $ it holds $ u = h + \Gamma * h.$ 
\end{lem}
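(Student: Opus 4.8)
The plan is to treat this as a classical renewal-theoretic statement adapted to the matrix-valued, supercritical setting, following the structure of the scalar argument in Lemma 26 of \cite{dfh} but keeping track of the non-commutativity and the Perron--Frobenius structure of the kernel $H$. For part 1, I would introduce the Laplace transform $\hat H(\alpha) = \int_0^\infty e^{-\alpha t} H(t)\, dt$, which under Assumption \ref{ass:superc} is a well-defined, entrywise-nonnegative matrix with finite entries for every $\alpha \ge 0$ (the polynomial bound $|h_{kl}(t)| \le C(1+t^p)$ guarantees convergence of the integral). Denote by $\mu_1(\alpha)$ its largest eigenvalue, which by Perron--Frobenius is real, nonnegative, and — by dominated convergence and the positivity of the entries — continuous and strictly decreasing in $\alpha$, with $\mu_1(0) = \mu_1 > 1$ by hypothesis and $\mu_1(\alpha) \to 0$ as $\alpha \to \infty$. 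Hence there is a unique $\alpha_0 > 0$ with $\mu_1(\alpha_0) = 1$. (One must be slightly careful that $H$ need not be irreducible; in that case one works with the Perron root, which is still the spectral radius and still monotone and continuous here because all entries are monotone in $\alpha$.)

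For part 2, the key identity is $\Gamma = \sum_{n\ge1} H^{*n}$, so formally $\Gamma = H + H*\Gamma$. To get the exponential bound I would pass to the ``tilted'' kernel $\tilde H(t) := e^{-\alpha_0 t} H(t)$, for which $\int_0^\infty \tilde H(t)\, dt = \hat H(\alpha_0)$ has spectral radius exactly $1$. This is the delicate point: a renewal kernel with spectral radius exactly $1$ is critical, not subcritical, so one does not get boundedness of $\sum_n \tilde H^{*n}$ for free. The standard fix, going back to \cite{crump,athreya-murthy}, is to first establish local boundedness of $\Gamma$ directly from the series: on any $[0,T]$, iterated convolutions $H^{*n}$ have entries bounded by $(C_T)^n t^{n-1}/(n-1)!$ times a constant, using $\|H\|_\infty$ on $[0,T]$, so the series converges locally uniformly and $\Gamma$ is locally bounded. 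Then for the global exponential bound I would show that $\tilde\Gamma(t) := e^{-\alpha_0 t}\Gamma(t)$ satisfies $\tilde\Gamma = \tilde H + \tilde H * \tilde\Gamma$ and argue that $\sup_{t\ge0}\int_0^t \tilde\Gamma(s)\,ds$ grows at most linearly (this is where criticality bites: one gets $\Gamma$ tilted is bounded in Cesàro/integrated sense, yielding $\Gamma_{ij}(t) \le C e^{\alpha_0 t}$ after possibly absorbing a polynomial factor into a slightly larger exponent, or one invokes the sharper renewal estimate from \cite{crump} directly). I expect this—getting the clean $Ce^{\alpha_0 t}$ rather than $C t^k e^{\alpha_0 t}$—to be the main technical obstacle, and the honest route is to cite Corollary 3.1 of \cite{crump} for precisely this bound.

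For part 3, this is the purely algebraic resolvent identity and is the easy part. Given locally bounded $u, h$ with $u = h + H*u$, I would substitute iteratively: $u = h + H*(h + H*u) = h + H*h + H^{*2}*u$, and after $n$ steps $u = h + \sum_{k=1}^{n} H^{*k}*h + H^{*(n+1)}*u$. On any fixed interval $[0,T]$, the remainder term $H^{*(n+1)}*u$ is bounded in sup-norm by $(C_T)^{n+1} T^{n}/n! \cdot \|u\|_{\infty,[0,T]} \to 0$, using the factorial decay of iterated convolution norms noted above, and $\sum_{k=1}^n H^{*k}*h \to \Gamma*h$ locally uniformly since $\sum_k H^{*k}$ converges locally uniformly. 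Hence $u = h + \Gamma*h$ on $[0,T]$ for every $T$, which gives the claim. I would present parts 1 and 3 in full and be appropriately brief on part 2, leaning on \cite{crump,athreya-murthy} for the sharp growth estimate \eqref{eq:growthgamma}.
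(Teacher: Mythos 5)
Your proposal is correct and follows essentially the same route as the paper: Laplace transform plus Perron--Frobenius monotonicity for the existence and uniqueness of $\alpha_0$, exponential tilting $\tilde H(t)=e^{-\alpha_0 t}H(t)$ combined with the critical matrix renewal theorem of Crump/Athreya--Murthy for the growth bound \eqref{eq:growthgamma}, and the resolvent identity for part 3. The only difference is cosmetic: you prove part 3 directly by iterating the convolution equation and killing the remainder via the factorial decay of $H^{*(n+1)}$ on compacts, whereas the paper simply cites Theorem 2.1 of \cite{crump}; you also correctly identify that the clean bound $Ce^{\alpha_0 t}$ (rather than a polynomially corrected one) is exactly the point for which the citation to \cite{crump} is doing the work.
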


\begin{proof}
1. Due to Assumption \ref{ass:superc}, there exist constants $C$ and $p$ such that $ | h_{kl }| (t) \le C (1+t^p) , $ for all $ 1 \le k, l \le n, $ for all $t \geq 0.$ As a consequence, the matrix-Laplace transform 
$$ {\cal L}_H ( \alpha) := \int_0^\infty e^{- \alpha t } H(t) dt $$
is well-defined for all $ \alpha > 0.$ Being a primitive matrix, by the Perron-Frobenius theorem, it possesses a unique maximal eigenvalue $ \lambda_H ( \alpha ) $ with an associated eigenvector composed of positive coordinates. By Assumption \ref{ass:superc}, $ \lambda_H ( 0) > 1$ and $ \lambda_H ( \infty ) = 0 .$ This implies that there exists a unique $\alpha_0$ such that $ \lambda_H ( \alpha_0) = 1.$ 
(This step demands some extra work, which is done in \cite{athreya-murthy,crump}).

2. Let $ M(t) = e^{- \alpha_0 t } H(t).$ Then $ M* M ( t) = e^{- \alpha_0 t } H* H ( t)$, thus, $ \sum_{n \geq 1 } M^{* n } (t) = e^{- \alpha_0 t } \Gamma (t) .$ Since $H(s) $ is at most of polynomial growth, then $\int_0^\infty t M(t) dt < \infty $ component-wise. This implies that all entries of the matrix ${\cal H} $ of (3.9) of \cite{crump} are finite, so that $ \lim_ {t \to \infty} \sum_{n \geq 1 } (M^{* n } (t))_{i, j }/t $ exists and is finite, for all $ 1 \le i, j \le n$, see page 431 of the proof of Theorem 3.1 of \cite{crump}, see also Theorem 2.2 of \cite{athreya-murthy}. It follows that there exists a constant $C$ such that $ \Gamma_{i, j } (t) \le C e^{\alpha_0 t } , $ for all $ 1 \le i, j \le n.$  

3.\ Follows from Theorem 2.1 of \cite{crump}.
\end{proof}

\subsection{Proof of Theorem \ref{theo:2}}
We first prove Propositions \ref{cor:subc} and \ref{cor:superc}. 

\begin{proof}{\bf of Proposition \ref{cor:subc}}
We have 
$$ \lambda_t^k := \frac{d m_t^k}{ dt}  = f_k \left ( \int_0^t \sum_l h_{kl} (t-s) d m_s^l \right ). $$
Using the Lipschitz property of $f_k, $ we obtain  
\begin{equation}\label{eq:bof0}
 \lambda_t^k \le f_k ( 0 ) + \sum_{l=1}^n L \int_0^t |h_{kl} (t-s)| \lambda_s^l ds .
\end{equation}
Using $H(s)$ defined in \eqref{eq:matrixh}, we can rewrite \eqref{eq:bof0} as
$$ \lambda_t \le f (0) + H * \lambda ( t) , $$
where $ f(0) = (f_1(0) ,\ldots ,  f_n (0) )^T.$ Define $ \Gamma_{l} :=\sum_{1 \le k \le l } H^{* k }.$ Since for any two matrix-valued functions $A,B, $ $ \int_0^\infty A* B (t) dt = (\int_0^\infty A(t) dt ) (\int_0^\infty B(t) dt) ,$ provided the integrals are well-defined, we have 
$$ \int_0^\infty \Gamma_l (t) dt = \sum_{1 \le k \le l} \Lambda^k ,\; \Lambda = \int_0^\infty H(t) dt, $$
having unique maximal eigenvalue $ \sum_{1 \le k \le l } \mu_1^k \le \frac{\mu_1}{1 - \mu_1} .$ As a consequence, the renewal function $\Gamma (t) = \sum_{l \geq 1} H^{* l } (t) $ is well-defined and locally bounded, and the maximal eigenvalue of $\int_0^\infty \Gamma (t) dt $ is given by $\frac{\mu_1}{1 - \mu_1}  .$ 

Any solution $ a(t)$ of $ a = f(0 ) + H * a $ is given by $ a(t) = f(0 ) + \Gamma * f(0) = f(0) +  \int_0^t \Gamma ( s) f(0) ds .$ Therefore, 
$$ \lambda (t) \le  f(0 ) + \left( \int_0^t \Gamma ( s)  ds \right)  f(0)  \le f(0 ) + \left( \int_0^\infty \Gamma ( s) ds \right) f(0) ,$$
where the inequality has to be understood component-wise. Thus, $\lambda ( t) $ is a bounded function of $t.$ This implies the first result.

By \eqref{eq:delta} and \eqref{eq:217}, writing $ \delta^N ( t) = ( \delta^N_1 ( t),\ldots ,  \delta_n^N ( t) )^T, $ 
\begin{eqnarray*}
 \delta^N_k (t) &\le& (H *  \delta^N  )_k (t) +  \frac{C}{\sqrt{N}} \int_0^t \sum_l [\int_0^s  h_{kl}^2 (s-u) \lambda_u^l du ]^{1/2}ds \\
&\le & (H *  \delta^N  )_k (t)  + \frac{Ct }{\sqrt{N}} ,
\end{eqnarray*}
by the first step, since $\lambda_u^l $ are bounded, and since $ h_{kl}$ by assumption are in $L^2( \R_+; \R) .$  As a consequence, 
$$\delta^N_k ( t) \le \frac{Ct }{\sqrt{N}} + \frac{C}{\sqrt{N}} \int_0^t \sum_{l=1 }^n \Gamma_{kl }  ( t- s) s ds  \le \frac{Ct }{\sqrt{N}}.$$

Together with the proof of Theorem \ref{theo:one} this finishes the proof.
\end{proof}

\begin{proof}{\bf of Proposition \ref{cor:superc}}
As in the proof of Proposition \ref{cor:subc}, we obtain \eqref{eq:bof0} for $1 \le k \le n$.  
Hence, using Lemma \ref{lem:app},
$$ \lambda_t^k  \le  f_k  (0) + \sum_l  \left( \int_0^t \Gamma_{kl }  ( s)  ds \right)  f_l (0) \le f_k ( 0) + C e^{\alpha_0 t } \le c e^{\alpha_0 t } , $$
where $c = \max_{1 \le k \le n } f_k ( 0 ) + C.$ 
The second assertion follows then analogously to the proof given for Proposition \ref{cor:subc}.
\end{proof}

The main ingredients for the proof of Theorem \ref{theo:2} are the bounds in Propositions \ref{cor:subc} and \ref{cor:superc}, depending on the criticality. 
We now prove Theorem \ref{theo:2} in the subcritical case, which is an adaptation of the proof of Theorem 10 of \cite{dfh} to the nonlinear case.

\begin{proof}{\bf of Theorem \ref{theo:2}, subcritical case.}
Put $U^N_{k, i } (t) := Z^N_{k, i } (t) - m_t^k, (k,i)\in I^N$, and introduce the martingales
$$
M^N_{k, i } (t)  =
 \int_0^t \int_0^\infty \1_{\left\{ z \le f_k  \left(   \int_0^{s} h_{kl} ( s-u) d m_{l } (u)    \right)\right\} } \tilde N^N_{k,i}  ( ds ,dz) = \bar Z^N_{k,i } ( t) - m_t^k , 
$$
where $\tilde N^N_{k,i}$ denotes the compensated PRM $ N^N_{k,i } (ds, dz ) - ds dz.$ Then 
\begin{equation}\label{eq:u}
U^N_{k, i } (t)  = M^N_{k, i }( t)  +  R_{k, i }^N  (t) ,
\end{equation}
where
$$ R_{k, i}^N ( t)  = Z^N_{k, i } (t)  - \bar Z^N_{k, i } (t)  .$$
We have already shown in \eqref{eq:rem9first} that 
$$  \E ( \sup_{s \le t } | R^N_{k, i } (s) | ) \le C t N^{-1/2} .$$

In \eqref{eq:u}, we have that $[M^N_{k, i }, M^N_{l, j } ]_t = 0 $ for $ (k,i) \neq (l,j) , $ since the martingales almost surely never jump at the same time. 
Moreover, $ [M^N_{k, i }, M^N_{k, i } ]_t = \bar Z^N_{k, i } (t) .$ Finally, $\E [ (M^N_{k, i } (t) )^2 ] = \E ( \bar Z^N_{k, i } (t)  ) = m_t^k   .$ 

We obtain
\begin{eqnarray}\label{eq:tobecited}
  (m_t^{k})^{-1}\E  |U^N_{k, 1 } (t) |  &\le &(m_t^{k})^{-1} \E  |M^N_{k, 1 }| +  (m_t^{k})^{-1} C   t N^{-1 /2}  \nonumber  \\
&\le & \left( m_t^k \right)^{-1/2}  +   \frac{Ct }{m_t^k N^{1/2}}.
\end{eqnarray}

Now we use that $t/N \to 0 $ and $\lim \inf_{t \to \infty } m_t^k /t = \alpha_k > 0 $ 
to deduce that
$$ \lim \! \! \! \! \sup_{ N, t \to \infty } (m_t^{k})^{1/2}\E \left[ |Z^N_{k, 1 } (t)  / m_t^k - 1 |\right] =\lim  \! \! \! \!  \sup_{ N, t \to \infty }  (m_t^{k})^{1/2} \left[ \frac{\E  |U^N_{k, 1 } (t) |}{(m_t^{k})}\right] \le 1 ,$$
implying item 1 of the theorem. 

The proof finishes in the lines of the proof of Theorem 10 of \cite{dfh}. For fixed $k$ and $ i = 1 , \ldots, \ell_k \le N_k ,$ we write 
$$ (m_t^{k})^{1/2} ( Z^N_{k,  i }(t)  / m_t^k - 1 ) = (m_t^{k})^{-1/2} M^N_{k, i } (t)  + (m_t^{k})^{-1/2}  R^{N}_{k, i }  ( t)  .$$ 
But $\E ((m_t^{k})^{-1/2} |\bar R_k^{N} (t) |) \le C t N^{-1/2}(m_t^k)^{-1/2}  \to 0$ as $t , N \to \infty .$ So we only have to prove that $\left( ((m_t^{1})^{-1/2} M^N_{1, i } (t) )_{ 1 \le i \le \ell_1},\ldots , ( (m_t^{n})^{-1/2} M^N_{n, i } (t) )_{1  \le i \le \ell_n} \right)   $ tends in law to ${\cal N} ( 0, I_{\ell_1 +\ldots +  \ell_2}) $ which follows as in \cite{dfh}, proof of Theorem 10. 

\end{proof}

\begin{proof}{\bf of Theorem \ref{theo:2}, supercritical case.}
We use the same notation as in the proof of the subcritical case and obtain in a first step the following control for \eqref{eq:tobecited}.
\begin{equation}\label{eq:1}
 (m_t^{k})^{-1}\E  |U^N_{k, 1 } (t) | \le  \left( m_t^k \right)^{-1/2}  +   \frac{Ce^{\alpha_0 t } }{m_t^k N^{1/2}}.
\end{equation}
Therefore, for $N, t \to \infty $ under the constraint that $ N^{- 1/2}  t^{-1} e^{\alpha_0 t} \to 0, $ 
$$ \lim \! \! \! \! \sup_{ N, t \to \infty } (m_t^{k})^{1/2}\E \left[ |Z^N_{k, 1 } (t)  / m_t^k - 1 |\right] =\lim  \! \! \! \!  \sup_{ N, t \to \infty }  (m_t^{k})^{1/2} \left[ \frac{\E  |U^N_{k, 1 } (t) |}{(m_t^{k})}\right] \le C ,$$
implying item 1 of the theorem.  

The second item of the theorem follows using the same arguments as in the subcritical case. 
\end{proof}

\subsection{Proof of Theorem \ref{theo:approx}}
The proof of Theorem \ref{theo:approx} is based on the following steps. First, a standard calculus shows that we have an approximation result for the generators.

\begin{lem}\label{prop:approx}
Grant the conditions of Theorem \ref{theo:approx}. Then there exists a constant $C$ such that for all $ \varphi \in C^3_b (\R^\kappa , \R ) , $ 
$$ \| A^X \varphi - A^Y \varphi \|_\infty \le C  \frac{\| \varphi \|_{3, \infty }}{N^2 }.$$
\end{lem}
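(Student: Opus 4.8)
The statement compares the generators $A^X$ of the PDMP \eqref{eq:cascadepdmp} and $A^Y$ of the diffusion \eqref{eq:cascadeapprox}. Since the two systems differ only in the last coordinate $X^N_{k,\eta_k}$ (respectively $Y^N_{k,\eta_k}$) of each block $k$, and there only in how the input $d\bar Z^N_{k+1}$ enters, I would first write out both generators explicitly. The deterministic transport parts $\sum_{k}\sum_{l<\eta_k}(-\nu_k x^{k,l}+x^{k,l+1})\partial_{x^{k,l}}\varphi$ are \emph{identical} for $A^X$ and $A^Y$ and cancel exactly. Hence $A^X\varphi - A^Y\varphi$ reduces to a sum over $k=1,\ldots,n$ of the discrepancy between the jump operator of $\bar Z^N_{k+1}$ acting on the $x^{k,\eta_k}$-coordinate and the corresponding drift-plus-diffusion terms. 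Concretely, for the jump part, a jump of the $j$-th neuron in population $k+1$ (rate $f_{k+1}(x^{k+1,0})$, there are $N_{k+1}$ of them) shifts $x^{k,\eta_k}$ by $c_k/N_{k+1}$, so the jump contribution is $N_{k+1} f_{k+1}(x^{k+1,0})\bigl[\varphi(x+\tfrac{c_k}{N_{k+1}}e_{k,\eta_k})-\varphi(x)\bigr]$, while the diffusion contribution is $c_k f_{k+1}(x^{k+1,0})\partial_{x^{k,\eta_k}}\varphi + \tfrac{1}{2}\tfrac{f_{k+1}(x^{k+1,0})}{N_{k+1}}\partial^2_{x^{k,\eta_k}}\varphi$ (using $c_k^2=1$).

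\textbf{Main step.} The heart of the proof is then a second-order Taylor expansion of $\varphi$ in the single direction $e_{k,\eta_k}$:
\begin{equation*}
\varphi\Bigl(x+\tfrac{c_k}{N_{k+1}}e_{k,\eta_k}\Bigr)-\varphi(x)
= \tfrac{c_k}{N_{k+1}}\partial_{x^{k,\eta_k}}\varphi(x)
+ \tfrac{1}{2}\tfrac{1}{N_{k+1}^2}\partial^2_{x^{k,\eta_k}}\varphi(x)
+ \tfrac{c_k}{6}\tfrac{1}{N_{k+1}^3}\partial^3_{x^{k,\eta_k}}\varphi(\xi_k),
\end{equation*}
for some intermediate point $\xi_k$. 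Multiplying by $N_{k+1}f_{k+1}(x^{k+1,0})$, the first-order term matches the diffusion drift exactly, the second-order term matches the diffusion diffusivity exactly, and what remains is the remainder
$\tfrac{c_k}{6}\tfrac{f_{k+1}(x^{k+1,0})}{N_{k+1}^2}\partial^3_{x^{k,\eta_k}}\varphi(\xi_k)$.
Summing over $k$, bounding $|f_{k+1}|\le\|f_{k+1}\|_\infty$ and $|\partial^3_{x^{k,\eta_k}}\varphi|\le\|\varphi\|_{3,\infty}$, and using $N_{k+1}\ge p_{k+1}N$ (so $1/N_{k+1}^2 \le C/N^2$), yields the bound $\|A^X\varphi-A^Y\varphi\|_\infty \le C\|\varphi\|_{3,\infty}/N^2$ with $C$ depending only on the $\|f_k\|_\infty$ and the population proportions $p_k$.

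\textbf{Expected obstacle.} There is no serious analytic obstacle here; the argument is essentially a one-dimensional Taylor estimate done $n$ times. The points requiring a little care are purely bookkeeping: keeping track of which coordinate each neuron population feeds into (the index shift $k\mapsto k+1$ with $n+1\equiv1$), correctly identifying that the jump rate carries the factor $N_{k+1}$ while the jump size carries $1/N_{k+1}$ so that the product produces an $O(1)$ drift and an $O(1/N_{k+1})$ diffusivity, and noting that $c_k^2=1$ so the sign of $c_k$ is irrelevant in the second-order term. One should also remark that $\varphi\in C^3_b$ suffices precisely because the jumps occur in a single coordinate, so only pure third derivatives $\partial^3_{x^{k,\eta_k}}$ appear in the remainder; no mixed derivatives are needed. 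This is why the hypothesis of Theorem \ref{theo:approx} (namely $\varphi\in C^4_b$ and $f_k\in C^5_b$) is more than enough for this lemma — the extra regularity will be consumed in the subsequent semigroup-comparison step via the standard identity $P^X_t\varphi - P^Y_t\varphi = \int_0^t P^X_{t-s}(A^X-A^Y)P^Y_s\varphi\,ds$, which requires $P^Y_s\varphi$ to stay in $C^3_b$ with controlled norm.
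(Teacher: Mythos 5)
Your proof is correct and is precisely the ``straightforward'' computation that the paper omits: the transport parts of $A^X$ and $A^Y$ cancel, and a third-order Taylor expansion of $\varphi$ in the single jumping coordinate $x^{k,\eta_k}$ (jump size $c_k/N_{k+1}$, rate $N_{k+1}f_{k+1}(x^{k+1,0})$, with $c_k^2=1$) reproduces the drift and diffusion terms exactly and leaves a remainder of order $\|f_{k+1}\|_\infty\|\varphi\|_{3,\infty}/N_{k+1}^2\le C\|\varphi\|_{3,\infty}/N^2$. Your side remarks — that only pure third derivatives appear because each jump is one-dimensional, and that $N_{k+1}\asymp p_{k+1}N$ is needed to convert $N_{k+1}^{-2}$ into $N^{-2}$ — are also accurate and consistent with the paper's setup.
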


The proof of the above lemma is straightforward and therefore omitted. In a next step, we obtain, applying It\^o's formula with jumps twice, the following estimate.

\begin{lem}\label{lem:2}
Grant the conditions of Theorem \ref{theo:approx}. Then there exists a constant $C$ such that for all $ \varphi \in C^4_b (\R^\kappa , \R ) , $ for any $ \delta > 0, $ 
$$ \| P_\delta^Y \varphi - \varphi - \delta A^Y \varphi \|_\infty \le C \delta^2 \| \varphi \|_{4, \infty }$$
and 
$$ \|P_\delta^X \varphi - \varphi - \delta A^X \varphi \|_\infty \le C \delta^2 \| \varphi \|_{2, \infty }.$$ 
\end{lem}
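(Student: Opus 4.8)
The plan is to apply Itô's formula twice, first to $\varphi$ itself and then to the remainder, in order to squeeze out two powers of $\delta$. For the diffusion $Y^N$, write $Y^N_\delta$ for the process started at a generic point $x$. Applying Itô's formula to $\varphi(Y^N_s)$ over $[0,\delta]$ and taking expectations kills the martingale part and gives $P^Y_\delta\varphi(x) - \varphi(x) = \int_0^\delta P^Y_s (A^Y\varphi)(x)\, ds$, since $\varphi \in C^4_b$ and the coefficients $b,\sigma$ are smooth with at most linear growth (recall $f_k \in C^5_b$), so all expectations are finite and the semigroup identity is justified. Then I would write $P^Y_s(A^Y\varphi)(x) - A^Y\varphi(x) = \int_0^s P^Y_u(A^Y A^Y\varphi)(x)\, du$, again by Itô applied to $A^Y\varphi$ — this is where I need $\varphi \in C^4_b$, because $A^Y\varphi$ involves two derivatives of $\varphi$ and a further application of $A^Y$ costs two more. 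Combining, $P^Y_\delta\varphi(x) - \varphi(x) - \delta A^Y\varphi(x) = \int_0^\delta\int_0^s P^Y_u\big((A^Y)^2\varphi\big)(x)\, du\, ds$, whose sup-norm is bounded by $\tfrac{\delta^2}{2}\|(A^Y)^2\varphi\|_\infty \le C\delta^2\|\varphi\|_{4,\infty}$, using that $A^Y$ is a second-order operator with $C^3_b$ coefficients so that $\|(A^Y)^2\varphi\|_\infty \le C\|\varphi\|_{4,\infty}$ with $C$ depending only on the $f_k$ and their derivatives.

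For the PDMP $X^N$ the argument is the same in structure but uses the jump form of Itô's (Dynkin's) formula. The generator $A^X$ acts as a first-order transport operator along the deterministic drift plus a non-local jump term $N_{k+1} f_{k+1}(x^{k+1,0})\big[\varphi(x + \tfrac{1}{N_{k+1}} e_{k,\eta_k}) - \varphi(x)\big]$, which only involves $\varphi$ and first-order information effectively (the jump sizes are $O(1/N)$, and in any case the non-local operator maps $C^1_b$ to $C^0_b$ with a constant independent of $N$, since $N_{k+1}\cdot\tfrac1{N_{k+1}} = O(1)$). Hence $A^X$ maps $C^1_b \to C^0_b$ and $C^2_b \to C^1_b$ boundedly, so $(A^X)^2\varphi$ is controlled by $\|\varphi\|_{2,\infty}$. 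Dynkin's formula for PDMPs (the process has locally bounded jump rate on the relevant excursions, or one truncates and passes to the limit) gives $P^X_\delta\varphi - \varphi = \int_0^\delta P^X_s(A^X\varphi)\,ds$ and then a second application yields the double integral bounded by $\tfrac{\delta^2}{2}\|(A^X)^2\varphi\|_\infty \le C\delta^2\|\varphi\|_{2,\infty}$.

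The routine checks are the finiteness of moments needed to justify the Dynkin identities; for $Y^N$ this follows from the linear-growth bound on $b$ and boundedness of $\sigma$ (standard SDE moment estimates), and for $X^N$ from the fact that $\bar Z^N_{k+1}$ has jump rate bounded by $N_{k+1}\|f_{k+1}\|_\infty$, so the number of jumps in $[0,\delta]$ has finite exponential moments and the coordinates $X^N_{k,l}$ stay in an a.s. bounded set on $[0,\delta]$. The main obstacle — really the only non-bookkeeping point — is verifying that $(A^X)^2$ and $(A^Y)^2$ genuinely map into bounded functions with the claimed dependence on $\|\varphi\|_{r,\infty}$ and with constants independent of $N$; for $A^X$ this hinges on the cancellation $N_{k+1}\cdot\tfrac{1}{N_{k+1}}$ in the discrete difference and on a first-order Taylor expansion of the jump term, which is exactly the "standard calculus" already invoked in Lemma \ref{prop:approx}, so the same estimates apply verbatim. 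With these in hand the lemma follows by integrating twice.
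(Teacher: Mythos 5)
Your argument is exactly the route the paper intends: the authors omit the proof entirely, saying only that it follows by "applying It\^o's formula with jumps twice," and your double Dynkin expansion $P_\delta\varphi-\varphi-\delta A\varphi=\int_0^\delta\!\int_0^s P_u(A^2\varphi)\,du\,ds$ together with the observation that $(A^Y)^2$ costs four derivatives of $\varphi$ while $(A^X)^2$ (transport plus an $O(1/N)$-jump term multiplied by an $O(N)$ rate) costs only two is precisely the intended bookkeeping. The proposal is correct and matches the paper's approach.
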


Again, the proof of the above lemma is straightforward and therefore omitted. Finally, we will use the following fact. 

\begin{lem}\label{lem:3}
Grant the conditions of Theorem \ref{theo:approx}. Then there exists a constant $C$ such that for all $ \varphi \in C^4_b (\R^\kappa  , \R ) , $ for any $t > 0,$
$$ \|P^Y_t \varphi  \|_{4, \infty } \le C \| \varphi \|_{4, \infty }.$$
\end{lem}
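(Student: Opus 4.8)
The plan is to derive an $L^\infty$-bound on the spatial derivatives of $P_t^Y\varphi$ up to order $4$ by differentiating the flow of the stochastic differential equation \eqref{eq:diffusionsmallnoise} with respect to its initial condition. Write $Y^{N,x}(t)$ for the solution of \eqref{eq:diffusionsmallnoise} started from $Y^N(0)=x$, so that $P_t^Y\varphi(x)=\E[\varphi(Y^{N,x}(t))]$. Since the coefficients $b$ and $\sigma$ in \eqref{eq:drift}--\eqref{eq:diff} are built from the $f_k$, which by assumption belong to $C_b^5$, the maps $b$ and $\sigma$ are $C^4$ with bounded derivatives of orders $1$ through $4$ (the zeroth-order terms are unbounded because of the linear drift, but that will not matter, as only derivatives of the flow enter). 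Standard results on smoothness of stochastic flows (e.g.\ Kunita) then give that $x\mapsto Y^{N,x}(t)$ is a.s.\ $C^4$ and that the Jacobian $J_t:=\partial_x Y^{N,x}(t)$ and the higher-order derivative processes $\partial_x^\alpha Y^{N,x}(t)$, $2\le|\alpha|\le 4$, solve linear SDEs driven by the derivatives of $b$ and $\sigma$ evaluated along $Y^{N,x}$.

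The key step is a Gronwall-type moment estimate for these derivative processes. For the first derivative, $J_t$ satisfies
\[
dJ_t = Db(Y^{N,x}(t))\,J_t\,dt + \frac{1}{\sqrt N}\sum_{j=1}^2 D\sigma^j(Y^{N,x}(t))\,J_t\,dB^j(t),
\]
and since $Db$ and $D\sigma^j$ are bounded, It\^o's formula applied to $|J_t|^2$ together with Gronwall's lemma yields $\E[|J_t|^2]\le e^{Ct}$, and more generally $\sup_{s\le t}\E[|J_s|^p]\le C_p e^{C_p t}$ for every $p$. The higher-order processes $\partial_x^\alpha Y^{N,x}$ for $|\alpha|=k$ solve linear SDEs whose inhomogeneous terms are polynomial in the lower-order derivatives (with coefficients the bounded derivatives of $b,\sigma$ of order $\le k$); by induction on $k$ and the same Gronwall argument one obtains $\sup_{s\le t}\E[|\partial_x^\alpha Y^{N,x}(s)|^p]\le C e^{Ct}$ for $|\alpha|\le 4$, uniformly in $x$ and in $N\ge 1$ (the $1/\sqrt N$ in front of the noise only helps). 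Finally, by the chain rule (Fa\`a di Bruno), $\partial^\alpha[P_t^Y\varphi](x)$ is a finite sum of terms of the form $(\partial^\beta\varphi)(Y^{N,x}(t))$ times products of the $\partial^{\gamma}Y^{N,x}(t)$ with $|\beta|\le|\alpha|\le 4$ and total order $\le 4$; taking expectations, using $\|\partial^\beta\varphi\|_\infty\le\|\varphi\|_{4,\infty}$ and the moment bounds just established (H\"older's inequality to handle the products), we get $\|\partial^\alpha(P_t^Y\varphi)\|_\infty\le C e^{Ct}\|\varphi\|_{4,\infty}$ for all $|\alpha|\le 4$, which is the claim, with the constant $C$ depending on $t$ through $e^{Ct}$ (absorbed into the generic constant $C$ for $t$ in a bounded range, or stated as a locally bounded function of $t$).

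The main obstacle is bookkeeping rather than anything deep: one must set up the system of SDEs for the derivative flows of all orders $\le 4$, check that their coefficients are exactly the bounded higher derivatives of $b$ and $\sigma$ (so the unboundedness of $b$ itself is irrelevant), and run the inductive Gronwall estimates cleanly, keeping track that all constants are uniform in $N$. An alternative, fully analytic route would be to differentiate the Kolmogorov backward PDE $\partial_t u = A^Y u$ directly and use Schauder-type interior estimates, but the probabilistic flow argument is more transparent given that $\sigma\sigma^T$ is degenerate and we are not claiming any smoothing, only propagation of the initial regularity of $\varphi$.
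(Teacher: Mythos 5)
Your argument is essentially the paper's own proof: the paper also writes $P_t^Y\varphi(x)=\E[\varphi(\Phi_t(x))]$ for the stochastic flow $\Phi_t$ of \eqref{eq:cascadeapprox}, differentiates under the expectation, and iterates, delegating the moment bounds on $\partial^\alpha\Phi_t(x)$ to the classical estimates in Kunita and Ikeda--Watanabe, which are exactly the Gronwall/linear-SDE estimates you spell out. Your explicit $e^{Ct}$ dependence of the constant is in fact the honest form of the bound (the Jacobian flow cannot be bounded uniformly in $t$ here, since $x^*$ is linearly unstable), a point the paper's statement glosses over; it is harmless for the application since the lemma is only used on the bounded range $t_k\le t$ in the proof of Theorem \ref{theo:approx}.
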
 

\begin{proof}
By Kunita (1990) \cite{kunita}, see also Ikeda-Watanabe (1989) \cite{ikeda-watanabe}, there exists a version $ \Phi_t ( x) $ of the stochastic flow associated to the SDE \eqref{eq:cascadeapprox} such that $Y^N (t)=  \Phi_t ( x)   ,$ where $Y^N ( t) $ is the solution of 
\eqref{eq:cascadeapprox} starting from $Y^N ( 0) = x.$ Under our assumptions, this flow is a flow of $C^4-$diffeomorphisms. Then we can write 
$ P^Y_t \varphi (x) = \E [\varphi ( \Phi_t (x) ]  ,$ and by dominated convergence, for any $ 1 \le k \le \kappa ,$ $ \partial_{x^{k }} P^Y_t \varphi (x) = \E [ \sum_{l} \partial_{x^{l }} \varphi ( \Phi_t (x) ) \frac{ \partial \Phi^{l} }{ \partial_{x^{k }}} (x) ] .$ The assertion follows then by iterating this argument, using classical estimates on the derivatives $ \partial^\alpha \Phi_t (x) $ obtained e.g.\ in \cite{ikeda-watanabe}.
\end{proof}

We are now able to finish the proof of Theorem \ref{theo:approx}. 

\begin{proof}{\bf of Theorem \ref{theo:approx}}
Fix $\delta > 0 $ and write $t_k = k \delta \wedge t , $ for $ k \geq 0.$ A standard trick yields 
\begin{equation}\label{eq:good}
 \| P_t^X \varphi - P_t^Y \varphi \|_\infty \le \sum_{k : k \delta \le t } \| P_{t - t_{k+1}}^X \Delta_\delta P_{t_k}^Y \varphi \|_\infty \le \sum_{k : k \delta \le t } \| \Delta_\delta P_{t_k}^Y \varphi \|_\infty ,
\end{equation}
where we define $ \Delta_\delta \varphi (x) = P_\delta^X \varphi ( x) - P_\delta^Y \varphi ( x) .$ Since $ \varphi \in C^4_b, $ also $ P_{t_k}^Y  \varphi \in C^4_b .$ 

By Lemma \ref{lem:2}, 
$$  \| P_\delta^X \varphi ( x) - P_\delta^Y \varphi ( x)\|_\infty \le \delta \| A^X \varphi - A^Y \varphi \|_\infty + \delta^2 \| \varphi\|_{4, \infty }.$$
Using Lemma \ref{prop:approx}, we deduce that 
$$  \| P_\delta^X \varphi ( x) - P_\delta^Y \varphi ( x)\|_\infty \le [\delta C \frac{1}{N^2 } + \delta^2 ]   \| \varphi\|_{4, \infty }.$$ 
Together with Lemma \ref{lem:3} and \eqref{eq:good}, this yields
$$ \| P_t^X \varphi - P_t^Y \varphi \|_\infty \le C (\frac{1}{N^2 } + \delta ) \; \| \varphi \|_{4, \infty } \left( \sum_{k : k \delta \le t } \delta  \right)  \le C t \frac{1}{N^2 } \| \varphi\|_{4, \infty }, $$
where the last inequality follows by choosing $ \delta = \frac{1}{N^2 } $ and using that $ card \{ k : k \delta \le t \} \le \frac{t}{\delta} .$  

\end{proof}

\subsection{Control theorem and proof of Proposition \ref{prop:control}}\label{sec:controlproof}
We will use the control theorem which goes back to Strook and Varadhan (1972) \cite{StrVar-72}, see also Millet and Sanz-Sole (1994) \cite{MilSan-94}, theorem 3.5, in order to prove Proposition \ref{prop:control}.   

For some time horizon $T_1<\infty$ which is arbitrary but fixed, write $\,\tt H\,$ for the Cameron-Martin space of measurable functions ${\tt h}:[0,T_1]\to \R^2 $ having absolutely continuous components ${\tt h}^\ell(t) = \int_0^t \dot{\tt h}^\ell(s) ds$ with $\int_0^{T_1}[\dot{\tt h}^\ell]^2(s) ds < \infty$, $1\le \ell\le 2$. For $x\in \R^\kappa $ and ${\tt h}\in{\tt H}$, consider the deterministic system 
\begin{equation}\label{generalcontrolsystem}
\varphi = \varphi^{(N, {\tt h}, x)} \; \mbox{solution to}\; d \varphi (t) = b (  \varphi (t) ) dt + \frac{1}{\sqrt{N}} \sigma( \varphi (t) ) \dot{\tt h}(t) dt, \; \mbox{with $\varphi (0)=x,$}  
\end{equation}
on $[0, T_1 ].$ Thus $\varphi $ is a function $[0,T_1 ]\to \R^\kappa $.

%
Using localization techniques as in H\"opfner, L\"ocherbach and Thieullen (2015) \cite{hh3}, Theorem 5, we obtain the following result. 

\begin{prop}\label{theo:4bis}
Grant the assumptions of Theorem \ref{theo:orbit}. Denote by $ Q_x^{t_1} $ the law of the solution $ (Y^N(t))_{0 \le t \le t_1} $ of  \eqref{eq:cascadeapprox}, starting from $Y^N (0) = x .$ Let $\,\varphi = \varphi^{(N, {\tt h},x)}\,$ denote a solution to
$$d \varphi (t) \;=\; b ( \varphi (t) )\, dt \;+\; \frac{1}{\sqrt{N}} \sigma ( \varphi  (t) )\, \dot{\tt h}(t)\, dt   \quad,\quad \varphi (0)=x.
$$

Fix $ x \in K  $ and  $  {\tt h } \in \tt H $ such that $\,\varphi = \varphi^{(N, {\tt h},x)}\,$ exists on some time interval $ [ 0, T_1 ] $ for $T_1 > t_1.$ 
Then
$$ \left(\varphi^{(N, {\tt h} , x ) }\right)_{| [0,  t_1 ] } \in \overline{ {\rm supp} \left( Q_x^{t_1 } \right)}.$$
\end{prop}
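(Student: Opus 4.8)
The statement is exactly a Stroock--Varadhan support theorem for the degenerate diffusion \eqref{eq:cascadeapprox}, combined with a localization step made necessary by the fact that the drift $b$ of \eqref{eq:drift} only has linear growth. The plan is as follows. First I would record that the coefficients have the regularity required by the classical theorem: $b$ is affine plus a bounded $C^\infty$ perturbation, since $f_1,f_2$ are smooth, and $\sigma$ in \eqref{eq:diff} is $C^\infty$ because $f_1,f_2$ are smooth and strictly positive, so $\sqrt{f_k}$ is smooth, and on every bounded set $f_k$ is bounded away from $0$, so all derivatives of $\sqrt{f_k}$ are bounded there. As already noted after \eqref{eq:diff}, the block structure of $\sigma$ --- each of its two columns is a function of a single coordinate which that very column does not drive --- makes the Stratonovich correction $\tfrac12\sum_\ell \sigma^\ell\!\cdot\!\nabla\sigma^\ell$ vanish identically, so \eqref{eq:cascadeapprox} coincides with its Stratonovich form, which is the form in which the support theorem is stated.

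Next I would localize, following the technique used in \cite{hh3}, Theorem 5. Since $\varphi=\varphi^{(N,{\tt h},x)}$ is assumed to exist on $[0,T_1]$ with $T_1>t_1$, the compact set $\varphi([0,T_1])$ lies in some open ball $B_R\subset\R^\kappa$. Choose globally bounded $C^\infty$ vector fields $\tilde b,\tilde\sigma^1,\tilde\sigma^2$ with bounded derivatives of every order, coinciding with $b,\sigma^1,\sigma^2$ on $\overline{B_{R+2}}$, and let $\tilde Y^N$, with law $\tilde Q_x^{t_1}$ on $C([0,t_1];\R^\kappa)$, be the solution of the associated SDE started at $x$. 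By pathwise uniqueness $\tilde Y^N$ and $Y^N$ agree up to the first exit time of $B_{R+2}$, so $Q_x^{t_1}$ and $\tilde Q_x^{t_1}$ induce the same law on the Borel set of paths staying in $B_{R+2}$ throughout $[0,t_1]$.

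Then I would apply the support theorem of Stroock and Varadhan \cite{StrVar-72}, in the form of Millet and Sanz-Sol\'e \cite{MilSan-94}, Theorem 3.5, to the SDE with the bounded smooth coefficients $\tilde b,\tilde\sigma$ --- no non-degeneracy of $\tilde\sigma$ is needed. It gives that $\,{\rm supp}(\tilde Q_x^{t_1})\,$ is the closure in $C([0,t_1];\R^\kappa)$ of the set of skeleton paths $\tilde\varphi^{(N,{\tt g},x)}|_{[0,t_1]}$, ${\tt g}\in{\tt H}$, where $\tilde\varphi^{(N,{\tt g},x)}$ solves $d\tilde\varphi(t)=\tilde b(\tilde\varphi(t))\,dt+\tfrac{1}{\sqrt N}\tilde\sigma(\tilde\varphi(t))\dot{\tt g}(t)\,dt$, $\tilde\varphi(0)=x$. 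Because our particular control path $\varphi=\varphi^{(N,{\tt h},x)}$ stays inside $B_R$ on $[0,T_1]\supset[0,t_1]$, where $\tilde b=b$ and $\tilde\sigma=\sigma$, it also solves the truncated control ODE there, so $\tilde\varphi^{(N,{\tt h},x)}|_{[0,t_1]}=\varphi^{(N,{\tt h},x)}|_{[0,t_1]}$ and hence $\varphi|_{[0,t_1]}\in{\rm supp}(\tilde Q_x^{t_1})$.

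Finally I would transfer the conclusion to $Q_x^{t_1}$: for $0<\varepsilon<1$ the open uniform-norm tube $U_\varepsilon:=\{\psi:\sup_{s\le t_1}|\psi(s)-\varphi(s)|<\varepsilon\}$ is contained in the set of paths staying in $B_{R+2}$ on $[0,t_1]$, since $\varphi([0,t_1])\subset B_R$; there $Q_x^{t_1}$ and $\tilde Q_x^{t_1}$ coincide, while $\tilde Q_x^{t_1}(U_\varepsilon)>0$ as $\varphi|_{[0,t_1]}\in{\rm supp}(\tilde Q_x^{t_1})$. Thus $Q_x^{t_1}(U_\varepsilon)>0$ for every $\varepsilon\in(0,1)$, i.e. $\varphi^{(N,{\tt h},x)}|_{[0,t_1]}\in\overline{{\rm supp}(Q_x^{t_1})}$, the desired conclusion. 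The main obstacle is precisely the localization bookkeeping --- verifying that the truncation can be chosen so as to leave $\varphi$ and an entire uniform tube around it inside the region where the two dynamics agree; the rest is a direct appeal to the classical support theorem, for which the degeneracy of $\sigma$ is immaterial.
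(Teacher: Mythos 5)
Your proof is correct and follows exactly the route the paper takes, namely the Stroock--Varadhan/Millet--Sanz-Sol\'e support theorem combined with the localization technique of \cite{hh3}, Theorem 5; the paper merely cites these references, whereas you spell out the truncation, the identification of the two laws on paths confined to the ball, and the transfer back via uniform tubes. Your observation that the It\^o and Stratonovich forms coincide because of the structure of $\sigma$ is also the one made in the paper.
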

 
We now show how to use Proposition \ref{theo:4bis} in order to prove Proposition \ref{prop:control}. 

\begin{proof}{\bf of Proposition \ref{prop:control}}
Fix $ x \in \R^\kappa  $ and $t_1 > 1.$ Recall that, to simplify notation, we write $ x = ( x^1 , \ldots , x^\kappa ), $ where $ \kappa = 2 + \eta_1 + \eta_2 $  instead of $x=  (x^{1, 0 }, \ldots , x^{1, \eta_1}, x^{2, 0}, \ldots , x^{2, \eta_2} ) .$ In particular, the coordinates $ x^{\eta_1 +1 }$ and $ x^\kappa $ correspond to the two coordinates which are driven by Brownian noise. Let $\Gamma ( t) , t \in [0, T ] , $ be a parametrization of the periodic orbit, with $T$  the periodicity of $\Gamma .$ 

Now, we choose a $\cal{C}^\infty$-function $\,\gamma   =  (\gamma^1 , \gamma^2 ) : \R_+ \to \R^2 $ satisfying 
\begin{equation}\label{prescription-gamma1}
\left\{\begin{array}{l}
\gamma^1 (0)= x^{ 1 }\\
\gamma^2 (0) =x^{\eta_1 + 2} \\ 
\gamma (s) \equiv  (\Gamma^{ 1 } (s) , \Gamma^{\eta_1 +2  } (s) )  \;\;\mbox{on $[1 ,\infty).$}
\end{array}\right. 
\end{equation}
We want to use $\gamma $ as a smooth trajectory driving the two components $\varphi^1 $ corresponding to $Y^N_{1, 0} $ and $\varphi^{\eta_1 +2}$ corresponding to $ Y^N_{2, 0} $ from their initial position to a position on the periodic orbit, during a time period of length one. 

We now show that it is indeed possible to choose a control $ {\tt h } $ such that $\varphi^1 = \gamma^1$ and $ \varphi^{\eta_1 +2} = \gamma^2 .$ Recall that the diffusion coefficient $ \sigma $ is null on every coordinate except the coordinates $ \eta_1 +1 $ and $\kappa .  $ As a consequence, any choice of $ {\tt h } $ does only allow to influence directly these two coordinates $ \eta_1 +1 $ and $\kappa .  $ However, above we have prescribed a trajectory $\gamma $ to the two coordinates $ 1$ and $ \eta_1 +2.$ So we have to prove that such a choice of $ {\tt h } $ is possible. 

Suppose for a moment that we have already found this control ${\tt h }.$ 
Then, by the structure of $b,$ once $\varphi^1 $ and $\varphi^{\eta_1 + 2 } $ are fixed, we necessarily have
\begin{equation}\label{eq:necessaryorbit1}
 \varphi^2 ( t) = \frac{ d \varphi^1 (t)}{dt} + \nu_1 \varphi^1 (t) , \; \ldots \; , \varphi^{\eta_1 + 1} (t) = \frac{ d \varphi^{\eta_1}  (t)}{dt} + \nu_1 \varphi^{\eta_1}  (t)
\end{equation} 
for the first population, and 
\begin{equation}\label{eq:necessaryorbit2}
 \varphi^{\eta_1+3} ( t) = \frac{ d \varphi^{\eta_1+2} (t)}{dt} + \nu_2 \varphi^{\eta_2 +2} (t) ,\;  \ldots \; , \varphi^{\kappa } (t) = \frac{ d \varphi^{\kappa - 1}  (t)}{dt} + \nu_2 \varphi^{\kappa - 1 }  (t) .
\end{equation} 
In other words, once $ \varphi^1 $ and $\varphi^{\eta_1 +2}$ are fixed, all other coordinates are entirely determined as measurable functions of $\gamma.$ Moreover, by the structure of the equations given in \eqref{eq:cascade}, it is clear that for all $t \geq 1, \varphi (t) = \Gamma ( t),$ i.e. the trajectory evolves on the orbit after time $1.$ 

We have to show that we can indeed find a function $ {\tt h } $ which allows for the above choice of $\varphi.$ The control ${\tt h } $ is related to the coordinates $ \varphi^{\eta_1 +1} $ and $ \varphi^\kappa $ through 
$$ \frac{ d \varphi^{\eta_1+1}  (t)}{dt}  = - \nu_1 \varphi^{\eta_1+1}  (t) + c_1 f_2 ( \varphi^{\eta_1+2}  (t)) + \frac{ c_1 }{ \sqrt{p_2}} \sqrt{ f_2 ( \varphi^{\eta_1 + 2} (t) ) } \dot{\tt h}^1 (t)  $$
and 
$$ \frac{ d \varphi^{\kappa}  (t)}{dt}  = - \nu_2 \varphi^{\kappa}  (t) + c_2 f_1 ( \varphi^{1}  (t)) + \frac{ c_2 }{ \sqrt{p_1}} \sqrt{ f_1 ( \varphi^{1} (t) ) } \dot{\tt h}^2 (t)  .$$
In the above two formulas, all functions $\varphi$ are known as measurable functions of the prescribed trajectory $ \gamma ,$ and $\dot{\tt h}^1 (t) $ and $\dot{\tt h}^2 (t) $ have to be chosen. But since $f_1 $ and $f_2$ are strictly positive, there exists indeed ${\tt h}:[0, \infty [ \to \R^2 $ achieving this choice of $\gamma .$ It suffices to choose
\begin{equation}\label{eq:h1}
\dot{\tt h}^1 (t) = \frac{ \frac{ d \varphi^{\eta_1+1}  (t)}{dt}  + \nu_1 \varphi^{\eta_1+1}  (t) -c_1 f_2 (\varphi^{\eta_1+2}  (t)) }{ c_1 / \sqrt{p_2} \sqrt{ f_2 ( \varphi^{\eta_1 + 2} (t) ) }} 
\end{equation}
and 
\begin{equation}\label{eq:h2}
 \dot{\tt h}^2 (t) = \frac{ \frac{ d \varphi^{\kappa }  (t)}{dt}  + \nu_1 \varphi^{\kappa }  (t) - c_2 f_1 ( \varphi^{1} (t))  }{ c_2 / \sqrt{p_1} \sqrt{ f_1 ( \varphi^{1} (t) ) }} .
\end{equation} 
Since $f_1 $ and $f_2$ are strictly positive (and even lower bounded on $K$), $\dot{\tt h } (t) $ is well-defined. 

Now, notice that for all $t \geq 1, $ $\varphi^1 (t) = \Gamma^1 (t) $ and $\varphi^{\eta_1+2 } ( t) = \Gamma^{\eta_1 + 2 } ( t )$ are evolving on the periodic orbit. Hence by \eqref{eq:necessaryorbit1} and \eqref{eq:necessaryorbit2}, necessarily $\varphi (t) = \Gamma (t) $ for all $t \geq 1. $ In particular, $\dot{\tt h}^1 (t) = \dot{\tt h}^2 (t) = 0 $ for all $t > 1.$ 

Hence, we have constructed a control forcing the trajectory to be on the periodic orbit after a fixed time. By construction, $ S ( \varepsilon, \varphi ) := \{ \psi \in C( \R_+ ; \R^\kappa ) : | \varphi (t) - \psi ( t) | < \varepsilon \; \forall 1 \le t \le t_1 \}  \subset O .$ Moreover, by Proposition \ref{theo:4bis}, $ \varphi_{ | [0, t_1]}  \in \overline{ {\rm supp} \left( Q_x^{t_1 } \right)} ,$ whence $ Q_x^{t_1} ( S ( \varepsilon, \varphi ) ) > 0 $ and thus a fortiori $Q_x^{t_1} (O) > 0, $ implying the assertion for all  $t_1 > 1.$  
 
\end{proof} 

%

\section*{Acknowledgments}
This research has been conducted as part of the project Labex MME-DII (ANR11-LBX-0023-01).  Author EL thanks Michel Bena\"{\i}m for many stimulating discussions concerning monotone cyclic feedback systems. We thank Mads Bonde Raad for careful reading and critical remarks. We also thank two anonymous referees for helpful comments and suggestions.  The work is part of the Dynamical Systems Interdisciplinary Network, University of Copenhagen. Moreover, it is part of the activities of FAPESP Research, Dissemination and Innovation Center for Neuromathematics (grant
2013/07699-0, S.\ Paulo Research Foundation).

\end{document}